\def\RSthmtxt{theorem~}\newref{thm}{name = \RSthmtxt}}
\def\RSlemtxt{lemma~}\newref{lem}{name = \RSlemtxt}}
\theoremstyle{plain}
\newtheorem{thm}{\protect\theoremname}
\theoremstyle{plain}
\newtheorem{lem}[thm]{\protect\lemmaname}
\theoremstyle{plain}
\newenvironment{nlem}[1]
  {\nlemtemp}
  {\endnlemtemp}
\theoremstyle{remark}
\newtheorem{rem}[thm]{\protect\remarkname}
\theoremstyle{remark}
\newtheorem*{rem*}{\protect\remarkname}
\theoremstyle{plain}
\newtheorem{proposition}[thm]{\protect\propname}
\theoremstyle{plain}
\theoremstyle{remark}
\newtheorem{claim}{\protect\claimname}
\DeclareMathOperator{\vol}{vol}
\DeclareMathOperator{\Div}{div}
\DeclareMathOperator{\Ric}{Ric}
\DeclareMathOperator{\Def}{Def}
\DeclareMathOperator{\curl}{curl}
\DeclareMathOperator{\grad}{grad}
\DeclareMathOperator{\Range}{Range}
\DeclareMathOperator{\tr}{tr}
\def\XXint#1#2#3{{\setbox0=\hbox{$#1{#2#3}{\int}$ }
\vcenter{\hbox{$#2#3$ }}\kern-.6\wd0}}
\newcommand\iso{\xrightarrow{
   \,\smash{\raisebox{-0.65ex}{\ensuremath{\scriptstyle\sim}}}\,}}
\newcommand\restr[2]{{% we make the whole thing an ordinary symbol
  \left.\kern-\nulldelimiterspace % automatically resize the bar with \right
  #1 % the function
  \vphantom{\big|} % pretend it's a little taller at normal size
  \right|_{#2} % this is the delimiter
  }}
\providecommand{\propname}{Proposition}
\providecommand{\claimname}{Claim}
\providecommand{\factname}{Fact}
\providecommand{\lemmaname}{Lemma}
\providecommand{\remarkname}{Remark}
\providecommand{\theoremname}{Theorem}
\begin{document}
\title[Geometric trapping for NSE on 2-manifolds]{A geometric trapping approach to global regularity 
for 2D Navier-Stokes on manifolds}
\author{Aynur Bulut}
\address[A. Bulut]{Department of Mathematics, Louisiana State University}
\email{aynurbulut@lsu.edu}
\author{Khang Manh Huynh}
\address[K.M. Huynh]{Department of Mathematics, University of California, Los Angeles}
\email{hmkhang24@math.ucla.edu}
\maketitle
\begin{abstract}
In this paper, we use frequency decomposition techniques to give a direct proof of global
existence and regularity for the Navier-Stokes equations on two-dimensional
Riemannian manifolds without boundary.  Our techniques are inspired by an approach 
of Mattingly and Sinai \cite{mattinglyElementaryProofExistence1999} which was developed in the 
context of periodic boundary conditions on a flat background, and which is based on a maximum principle for
Fourier coefficients.

The extension to general manifolds requires several new ideas, connected to the 
less favorable spectral localization properties in our setting.  Our arguments make use of 
frequency projection operators, multilinear estimates that originated in the study of the 
non-linear Schr\"odinger equation, and ideas from microlocal analysis.
\end{abstract}

\section{Introduction}

Let $(M,g)$ be a closed, oriented, connected, compact smooth two-dimensional
Riemannian manifold, and let $\mathfrak{X}(M)$ denote the space of smooth vector
fields on $M$.  We consider the incompressible Navier-Stokes equations 
on $M$, with viscosity coefficient $\nu>0$,
\begin{equation}
\left\{ \begin{array}{rll}
\partial_{t}U+\Div\left(U\otimes U\right)-\nu\Delta_{M}U & =-\grad p & \text{ in }M\\
\Div U & =0 & \text{ in }M
\end{array}\right.,\label{eq:NavierStokes}
\end{equation}
with initial data $$U_0\in \mathfrak{X}(M),$$ where $I\subset\mathbb{R}$ is an open interval, 
and where $U:I\to\mathfrak{X}(M)$ and $p:I\times M\to\mathbb{R}$ represent the velocity
and pressure of the fluid, respectively.  Here, the operator $\Delta_{M}$ is 
any choice of Laplacian defined on vector fields on $M$, discussed below.

The theory of two-dimensional fluid flows on flat spaces is well-developed, and a 
variety of global regularity results are well-known.  This includes results on the 
whole space $\mathbb{R}^2$, on smooth bounded domains $\Omega\subset\mathbb{R}^2$, 
and on the square $(0,1)^2$ with periodic boundary conditions, which corresponds 
to the flat torus $\mathbb{T}^2$.  Important results in this direction are due to 
Ladyzhenskaya \cite{ladyzhenskaya}, and Fujita-Kato \cite{fujitaNavierStokesInitialValue1964}, 
with the latter analysis being based on estimates for the heat semigroup.  

In \cite{mattinglyElementaryProofExistence1999}, Mattingly and Sinai give an elementary 
proof of regularity (and in fact analyticity) for the periodic setting -- see also the 
references cited in \cite{mattinglyElementaryProofExistence1999} for a summary of other 
related results.  The technique in \cite{mattinglyElementaryProofExistence1999} 
works directly with sequences of Fourier coefficients.  They establish a priori bounds 
for the two-dimensional flow by appealing to a Galerkin method and invoking a variant 
of the maximum principle applied to the system of ODEs for the Fourier coefficients.

In this paper, we develop and extend this {\it geometric trapping} method to the case of the
Navier-Stokes system posed on a general manifold $M$ satisfying conditions as above.

The study of fluid equations such as (\ref{eq:NavierStokes}) posed on manifolds has a 
long history.  In addition to the physical motivation, where fluid models posed on 
surfaces such as the sphere emerge naturally as we consider atmospheric models of the 
Earth, the PDEs of fluids are intimately tied to geometry.  The interplay 
between geometry and analysis arising in the study of fluid dynamics has inspired 
new developments in many directions.  This includes applications to Hodge theory, 
the Euler-Arnold equation, Leray's sheaf theory, Killing vector fields, and other 
areas.  We refer interested readers to \cite{arnoldGeometrieDifferentielleGroupes1966, ebinGroupsDiffeomorphismsMotion1970,caoNavierStokesEquationsRotating1999, caoRammahaTiti2000, kobayashiNavierStokesEquations2008,mitreaNavierStokesEquationsLipschitz2001, taylorPartialDifferentialEquations2011,huynhHodgetheoreticAnalysisManifolds2019} and the references contained in these works.

Before proceeding, we elaborate on the choice of the vector Laplacian 
$\Delta_M$.  Due to the influence of curvature, there are essentially three canonical choices for 
the vector Laplacian, 
\begin{itemize}
\item the Hodge-Laplacian $\Delta_{H}=-\left(d\delta+\delta d\right)$,
which is defined on differential forms, and then extended to vector
fields by the musical isomorphism,
\item the connection Laplacian (or {\it Bochner Laplacian})
$\Delta_{B}T:=\tr\left(\nabla^{2}T\right)=\nabla_{i}\nabla^{i}T$
for any tensor $T$ (note that, by the Weitzenbock formula, which we recall in Appendix \ref{appendix:notation}, we have $\Delta_{B}X=\Delta_{H}X+\Ric(X)$ for all smooth
vector fields $X$ on $M$), and
\item the deformation Laplacian $\Delta_{D}X=-2\Def^{*}\Def X=2\Div\Def X$
where $\left(\Def X\right)^{ij}=\frac{1}{2}\left(\nabla^{i}X^{j}+\nabla^{j}X^{i}\right)$
for $X\in\mathfrak{X}(M)$. Then, for all smooth vector fields $X$, $\Delta_{D}X=\Delta_{H}X+2\Ric(X)+\grad\Div X$.
Since $\Div U=0$ in the Navier-Stokes equation, we can treat $\Delta_{D}$
as $\Delta_{H}+2\Ric$ for the incompressible Navier-Stokes equation.
\end{itemize}

Each of the operators $\Delta_H$, $\Delta_B$, and $\Delta_D$ have the same principal symbol (or leading
terms), and so our treatment is largely indepedent of the specific choice of $\Delta_M$. In the context of fluid models 
on manifolds, the Hodge Laplacian was used in \cite{caoNavierStokesEquationsRotating1999,kobayashiNavierStokesEquations2008},
while the deformation Laplacian was preferred in more recent works
such as \cite{taylorPartialDifferentialEquations2011,chanFormulationNavierStokesEquations2017,mitreaNavierStokesEquationsLipschitz2001,pruessNavierStokesEquationsSurfaces2020}.  We use the convention that all three operators are negative definite, to 
be consistent with the scalar Laplacian (that is, the Laplace-Beltrami operator 
$\Delta f=\Delta_{H}f=\Div\grad f=\nabla^{i}\nabla_{i}f$).

We are now ready to state our main result.

\begin{thm}
\label{thm:general-manifold}
Let $(M,g)$ be a closed, oriented, connected, compact smooth two-dimensional
Riemannian manifold, and let $\Delta_M$ be any of the vector Laplacian
operators $\Delta_H$, $\Delta_B$, or $\Delta_D$ on $M$.

Suppose that $U_0\in \mathfrak{X}(M)$.  Then there exists a unique global-in-time 
smooth solution $U:\mathbb{R}\rightarrow \mathfrak{X}(M)$ to (\ref{eq:NavierStokes})
with $U(0)=U_0$.
\end{thm}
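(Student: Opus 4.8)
The plan is to establish global smooth existence by combining a standard local existence and uniqueness result with a priori bounds that prevent finite-time blowup, following the philosophy of Mattingly--Sinai but adapted to the curved setting. First I would set up the functional-analytic framework: project onto divergence-free vector fields (the Leray projection, which on a closed manifold is just Hodge projection onto the coclosed part), so that \eqref{eq:NavierStokes} becomes an evolution equation $\partial_t U - \nu \Delta_M U = \mathbb{P}\, \Div(U\otimes U)$ with the pressure eliminated. Local well-posedness in $H^s$ for $s$ large follows from the smoothing of the heat semigroup generated by $\Delta_M$ together with the algebra property of $H^s$ and standard product/commutator estimates on compact manifolds; this yields a solution on a maximal interval $[0,T^*)$ with the blowup alternative that $\|U(t)\|_{H^s}\to\infty$ as $t\uparrow T^*$ if $T^*<\infty$. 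The classical low-regularity energy identity gives $\tfrac{d}{dt}\|U\|_{L^2}^2 = -2\nu\|\nabla U\|_{L^2}^2$ essentially as in the flat case (the curvature terms in $\Delta_D$ or $\Delta_B$ are lower order and controlled by $\|U\|_{L^2}^2$ after an absorption), so $\|U(t)\|_{L^2}$ is bounded; in 2D the key further step is to control the vorticity (or equivalently $\|U\|_{H^1}$).

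The heart of the argument is a frequency-by-frequency bound on $U$ in the eigenbasis of the vector Laplacian, playing the role of the Fourier-coefficient maximum principle of \cite{mattinglyElementaryProofExistence1999}. I would let $\{e_k\}$ be an orthonormal basis of eigen-vector-fields of $-\Delta_M$ with eigenvalues $\lambda_k\to\infty$, expand $U=\sum_k u_k e_k$, and derive the system of ODEs $\dot u_k = -\nu\lambda_k u_k + \mathcal{N}_k(u)$ where $\mathcal{N}_k$ is the triple-inner-product nonlinearity. The plan is to run a Galerkin truncation, prove uniform-in-truncation bounds on a suitable weighted norm $\sum_k \lambda_k^{\sigma} e^{c\sqrt{\lambda_k}} |u_k|^2$ (or an $\ell^1$-type envelope), and pass to the limit; the weight $e^{c\sqrt{\lambda_k}}$ is what ultimately yields analyticity/smoothness, exactly as the exponential weight in the periodic case. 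To make the trapping argument work one needs that whenever a high-frequency coefficient is maximal (relative to the envelope), the nonlinear term cannot push the envelope upward faster than the dissipation $-\nu\lambda_k$ pulls it down. This requires quantitative control of the nonlinear interaction coefficients $\langle \Div(e_i\otimes e_j), e_k\rangle$, which in the flat torus are exactly the convolution structure $\hat\delta_{i+j=k}$ but on a general manifold are not sharply localized.

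The main obstacle — explicitly flagged in the abstract — is precisely this loss of sharp spectral localization: the ``near-diagonal'' concentration of the trilinear nonlinearity is only approximate on a curved manifold, so one cannot argue purely combinatorially. Here I would bring in the tools the abstract advertises: frequency projection operators $P_N$ (Littlewood--Paley pieces built from the spectral decomposition of $\Delta_M$), bilinear/multilinear estimates of the type used for NLS on compact manifolds (Bourgain-type or Burq--Gérard--Tzvetkov-style $L^p$ spectral-cluster and bilinear estimates) to bound $\|P_N \Div(P_{N_1}U\otimes P_{N_2}U)\|$ by the right powers of $N,N_1,N_2$ with summable off-diagonal decay, and microlocal/pseudodifferential calculus to handle the commutators between $\mathbb{P}$, $\Delta_M$, and the projections $P_N$ (these commutators are lower order but must be shown not to destroy the gain). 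Once these estimates are in hand, the logical structure is: (i) local existence + blowup alternative; (ii) $L^2$ energy bound; (iii) a differential inequality for the weighted high-frequency envelope, closed using the multilinear estimates and the $L^2$ bound, showing the envelope stays bounded on any finite time interval; (iv) therefore $\|U(t)\|_{H^s}$ stays finite for all $s$ and all $t<\infty$, contradicting blowup, so $T^*=\infty$; (v) backward-in-time and uniqueness are handled by the same local theory, and the Gevrey/analytic regularity is a byproduct of the exponential weight. I expect steps (iii) and the commutator bookkeeping in the curved geometry to be where essentially all the real work lies.
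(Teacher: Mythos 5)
Your high-level plan matches the paper's philosophy (Galerkin truncation, a componentwise frequency envelope trapped by viscous domination, Burq--G\'erard--Tzvetkov multilinear estimates to compensate for the loss of spectral localization), but as written it leaves the decisive steps unproven, and two of your structural choices would make them harder rather than easier. First, the paper does not work with eigen-vector-fields of $-\Delta_M$: for $\Delta_B$ or $\Delta_D$ these do not interact cleanly with the Hodge decomposition, and the spectral-cluster estimates you want to invoke are stated for scalar eigenfunctions. The paper instead passes to the scalar vorticity $\omega=\star d\flat U$, uses the functional calculus of the scalar Laplacian for the projections $P_k=1_{[k,k+1)}(\sqrt{-\Delta})$, and treats $F=\Delta_M-\Delta_H$ as a zeroth-order perturbation. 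This forces it to confront two items absent from your outline: the harmonic part $\mathcal{P}_{\mathcal{H}}U$ (nontrivial on a general surface, and not governed by $\omega$), which must be controlled separately through the energy bound, and the linear curvature terms $D^2(-\Delta)^{-1}P_l\omega$, which are handled by a stationary-phase-style integration by parts in the spectral parameter rather than by generic pseudodifferential commutator bounds.

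Second, and more seriously, you have not checked that the trapping closes, and with your proposed exponential weight it very likely does not. On a general manifold the bilinear estimate carries an unavoidable loss $\min(l_1,l_2)^{1/4}$, and the product of two frequency-localized pieces is not confined to triangle-inequality regions at all; summing over all pairs $(l_1,l_2)$ therefore requires a separate trilinear estimate with rapid decay in the separation parameter (the paper generalizes an estimate of Hani for exactly this purpose), plus ad hoc arguments in the intermediate regions. After all of this the nonlinearity is bounded by $A_1\mathcal{E}^*_T\,k^{-(r-7/4)}$ against a dissipation of size $k^{-(r-2)}$ --- a margin of only $k^{1/4}$, recovered by choosing the trapping threshold $K_0$ large. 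With a polynomial envelope $k^{-r}$ this quarter-power margin suffices; with the Gevrey weight $e^{c\sqrt{\lambda_k}}$ you propose, the nonlinear interaction of comparable frequencies picks up no smallness from the weight ratio, so the same bookkeeping does not obviously dominate the dissipation, and the paper deliberately retreats to polynomial weights and claims only smoothness. Your blowup-alternative framing is also unnecessary overhead: once the uniform polynomial bounds hold for every $r>1$, the Galerkin solutions are precompact in $C_{t,\mathrm{loc}}H_x^\infty$ and one extracts a global smooth solution directly, with uniqueness following from an elementary Gronwall estimate for smooth solutions.
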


As we mentioned above, to prove Theorem $\ref{thm:general-manifold}$, we will extend and 
develop the geometric trapping ideas that originated in the setting of the two-dimensional
torus in  
\cite{mattinglyElementaryProofExistence1999}.  In fact, our methods 
give sharper information about the regularity of solutions than what we have stated; we choose 
to state the basic smoothness claim to focus on the essential aspects of the argument.

In our setting, a number of subtleties arise that require new ideas beyond the treatment in 
\cite{mattinglyElementaryProofExistence1999}, even in the case of the sphere 
$S^2\subset\mathbb{R}^3$.  To illustrate this, one can ask whether 
replacing $e^{i2\pi\left\langle k,z\right\rangle }$
with spherical harmonics (the eigenfunctions on the sphere) would
extend the result to the sphere.  However, such an approach will not work directly.  
This is because of the poor spectral localization
of products on the sphere (unlike $e^{i2\pi\left\langle k_{1},z\right\rangle }e^{i2\pi\left\langle k_{2},z\right\rangle }=e^{i2\pi\left\langle k_{1}+k_{2},z\right\rangle }$
for the torus). At most, the resulting frequency will lie in a region
defined by triangle inequalities. Moreover, the $L^{2}$ estimate
of the product suffers from an extra factor $\min\left(k_{1},k_{2}\right)^{\frac{1}{4}}$,
which is essentially sharp on the sphere. This will lead to an unacceptable
loss of decay in summing the frequencies. 

Instead, we follow a different approach.  We will group eigenfunctions with the same eigenvalue 
together, and work with 
eigenspace projections instead of Fourier coefficients. We will also replace the 
non-optimal use of H\"older's inequality in the bounds by
multilinear estimates from the theory of non-linear Schr\"odinger equations 
\cite{burqMultilinearEigenfunctionEstimates2005}.  Combining this with a few additional
technical tools, we gain enough decay to obtain geometric trapping in the case when 
$M$ is the sphere $S^2\subset\mathbb{R}^3$.

For general compact manifolds, the situation is even more complicated.
The spectral localization of products is poorer (with no triangle
inequalities), the Ricci tensor is no longer constant, and there can be non-trivial harmonic 1-forms.  To handle the non-triangle regions, we
extend some estimates from \cite{haniGlobalWellposednessCubic2011}, generalizing 
the argument to handle more derivatives as needed in our setting.

To handle the extra terms coming from the Ricci tensor, we use an integration
by parts argument as in the method of stationary phase. To avoid dealing with 
the distribution of eigenvalues on manifolds, we use frequency cutoffs as 
defined by the functional calculus of the Laplacian.  The passage between 
eigenspace projections and frequency cutoffs for multilinear estimates is 
made possible by a Fourier decomposition technique.

\subsection*{Outline of the paper}

In Section 2, we recall our notation and give a preliminary derivation of the precise
formulation of the Navier-Stokes system (\ref{eq:NavierStokes}) that we will use
in our analysis, including the construction of a sequence of Galerkin projections, for 
which we will establish a priori bounds.

The global regularity results of Theorem \ref{thm:general-manifold}
will follow from a priori bounds for this system (independent of the projection);
we establish these in Section \ref{sec:geom-trap}, where we formulate the geometric
trapping construction, and Section \ref{sec:viscous}, which contains the main estimates
that allow us to take full advantage of the diffusive effect of the viscosity term.

\subsection*{Acknowledgments}

We are very grateful to Terence Tao, Yakov Sinai, and Zaher Hani for valuable discussions
during the preparation of this work.

\section{Notation and preliminaries}

In this section, we establish our notation, and derive the main formulation
of the Navier-Stokes system (posed on the manifold $M$) that we will use in our analysis.
In particular, after recalling our notation and introducing a relevant class of
frequency cutoff and projection operators in Sections \ref{sec:notation} and 
\ref{sec:freq_cutoff}, we formulate the system in terms of an equation for the {\it vorticity} (see
Section \ref{sec:vorticity}), and introduce a sequence of Galerkin projections
(see Section \ref{sec:galerkin}), along with some preliminary analysis of the
relevant a priori estimates for the system.

\subsection{Geometric notation \& review}
\label{sec:notation}

Unless mentioned otherwise, the metric $g$ is the Riemannian metric,
and the connection $\nabla$ is the Levi-Civita connection.  We write 
$\left\langle \cdot,\cdot\right\rangle $ to denote the Riemannian
fiber metric for tensor fields on $M$. We also define the dot product
\[
\left\langle \left\langle \sigma,\theta\right\rangle \right\rangle =\int_{M}\left\langle \sigma,\theta\right\rangle \vol
\]
where $\sigma$ and $\theta$ are tensor fields of the same type,
while $\vol$ is the Riemannian volume form. When there is
no possible confusion, we will omit writing $\vol$.

Let $\Omega^1(M)$ denote the space of $1$-forms on $M$.  As usual, for any smooth vector 
field $X\in\mathfrak{X}(M)$, we define $X^{\flat} \in\Omega^{1}(M)$, also denoted by
$\flat X$, by setting $X^{\flat}(Y):=\left\langle X,Y\right\rangle$ for $Y\in \mathfrak{X}(M)$.
This is the so-called {\it musical isomorphism}, which 
identifies $\mathfrak{X}(M)$ with $\Omega^{1}(M)$. Similarly, we define
$\left\langle \alpha^{\sharp},Y\right\rangle =\alpha(Y)$ for any
$\alpha\in\Omega^{1}(M)$ and $Y\in\mathfrak{X}(M)$.

As in \cite{huynhHodgetheoreticAnalysisManifolds2019}, we will
often use {\it Penrose abstract index notation} (cf. \cite[Section 2.4]{waldGeneralRelativity1984}),
where the indices do not correspond to any preferred coordinate system,
but only indicate the types of tensors and how they contract. This
should not be confused with the similar-looking Einstein notation
for local coordinates, or the similar-sounding Penrose graphical
notation.  We collect the usual identities in
differential geometry (proved in \cite{leeManifoldsDifferentialGeometry2009}
and \cite{waldGeneralRelativity1984}), expressed in Penrose notation, in Appendix \ref{appendix:notation}.

We conclude this section by recalling some conventions and common notation used 
throughout the rest of this paper.  We will write $A\lesssim_{x,\neg y}B$ for 
$A\leq CB$, where $C$ is a positive constant depending on $x$ and not $y$. Similarly, 
$A\sim_{x,\neg y}B$ means $A\lesssim_{x,\neg y}B$ and $B\lesssim_{x,\neg y}A$. We
will omit the explicit dependence when it is either not essential or obvious by context.

With $\Omega^k(M)$, $k\geq 1$, denoting the space of $k$-forms on $M$, we recall the usual 
Hodge star operator $\star:\Omega^{k}(M)\iso\Omega^{n-k}(M)$, the exterior derivative 
$d:\Omega^{k}(M)\to\Omega^{k+1}(M)$, the codifferential $\delta:\Omega^{k}(M)\to\Omega^{k-1}(M)$,
and the Hodge Laplacian $\Delta=-\left(d\delta+\delta d\right)$ (cf. 
\cite[Section 2.10]{taylorPartialDifferentialEquations2011a} and 
\cite[Definition 1.2.2]{schwarzHodgeDecompositionMethod1995}).  We note that 
$d$ is the operator that appears in Stokes' theorem, and remark that $\delta$ is the 
$L^{2}$-adjoint of $d$; moreover, $\delta(X^{\flat})=-\Div_{g}X$ for 
$X\in\mathfrak{X}(M)$.

Throughout the paper, we will use the notation $D^{k}$ as a ``schematic'' for 
a spatial differential operator of order $k$, with coefficients bounded in
the $C^{\infty}$ topology for all time.  In each setting where this appears,
results from microlocal analysis (or just straight calculations) then give 
the schematic identities $$[D^{k},D^{l}]=D^{k+l-1},$$ when 
$\sigma(D^k)\circ \sigma(D^l)=\sigma(D^l)\circ \sigma(D^k)$ (e.g., 
when $D^k$ is a Laplace-type operator), and 
$$\left\langle \left\langle D^{k}\phi,\psi\right\rangle \right\rangle =\left\langle \left\langle \phi,D^{k}\psi\right\rangle \right\rangle,$$
for smooth tensor fields $\phi$ and $\psi$.

\subsection{Frequency cutoff and projection operators}
\label{sec:freq_cutoff}

As the Laplace-Beltrami operator $\Delta$ is self-adjoint, we can
define its functional calculus by the spectral 
theorem (see \cite[Section A.8]{taylorPartialDifferentialEquations2011a}).  For any 
$s\in\sigma\left(\sqrt{-\Delta}\right),$ define $\pi_{s}:\mathcal{D}'\left(M\right)\to C^{\infty}(M)$
as the continuous eigenspace projection mapping into the
space of eigenfunctions corresponding to $s$. So $\left(-\Delta\right)\pi_{s}=s^{2}\pi_{s}$
on $\mathcal{D}'\left(M\right)$. In particular, the image of $\pi_{0}$
is the space of constant functions. 

We also define the frequency cutoff projections $$P_{k}=1_{[k,k+1)}\left(\sqrt{-\Delta}\right)=\sum_{s\in\sigma\left(\sqrt{-\Delta}\right)\cap[k,k+1)}\pi_{s},\quad k>0,$$
and, for $k,m\in\mathbb{N}_{0}$, we define 
$$\mathcal{P}_{1}:H^{m}\Omega^{k}\left(M\right)\to d\left(H^{m+1}\Omega^{k-1}(M)\right),$$
$$\mathcal{P}_{2}:H^{m}\Omega^{k}\left(M\right)\to\delta\left(H^{m+1}\Omega^{k-1}(M)\right),$$
and
$$\mathcal{P}_{3}=\mathcal{P}_{\mathcal{H}}:\mathcal{D}'\Omega^{k}\left(M\right)\to\Omega^{k}(M),$$
as the continuous Hodge projections. As in \cite{huynhHodgetheoreticAnalysisManifolds2019},
we have $1=\mathcal{P}_{1}+\mathcal{P}_{2}+\mathcal{P}_{3}$, and remark that 
$$\mathbb{P}:=\mathcal{P}_{2}+\mathcal{P}_{3}$$ is the classical Leray
projection operator. 

Note that the range of $\mathcal{P}_{\mathcal{H}}$ is finite-dimensional
(with $\mathcal{P}_{\mathcal{H}}=\pi_{0}$ on $\Omega^{0}\left(M\right)$),
which is essentially the frequency zero. The foundational result of
Hodge theory is that for any $m\in\mathbb{N}_0$ and $k\in\mathbb{N}_{0}$,
$\Delta_{H}$ is bijective from $\left(1-\mathcal{P}_{\mathcal{H}}\right)H^{m+2}\Omega^{k}\left(M\right)$
to $\left(1-\mathcal{P}_{\mathcal{H}}\right)H^{m}\Omega^{k}\left(M\right)$.  It follows from this that 
$\left(-\Delta_{H}\right)^{-1}$ is well-defined on 
$\left(1-\mathcal{P}_{\mathcal{H}}\right)H^{m}\Omega^{k}\left(M\right)$.
We also easily see that $\mathcal{P}_{\mathcal{H}}\star=\star\mathcal{P}_{\mathcal{H}}$.  

We can extend $\mathcal{P}_{\mathcal{H}},\mathcal{P}_{1},\mathcal{P}_{2}$ to vector fields 
via the musical isomorphism on $1$-forms.  We can also define the frequency cutoffs $P_k$ on 
differential forms (and vector fields) by invoking the functional calculus of $\Delta_H$.

Then $P_kd=dP_k$, $P_k\delta=\delta P_k$, $P_k\Delta_H=\Delta_HP_k$, $P_k\star=\star P_k$, $P_k\mathcal{P}_i=\mathcal{P}_iP_k$ for
$i=1,2,3$.  In particular, $P_k\mathcal{P}_{\mathcal{H}}=0$ for any $k>0$.

Let $\lambda_1$ denote the smallest nonzero mode; that is,
\begin{align}
\lambda_1=\lambda_1(M):=\min (\sigma(\sqrt{-\Delta_H})\setminus \{0\}),\label{eq:lambda1}
\end{align}
where $\Delta_H$ is treated as an unbounded operator on $$L^2\Omega(M):=\oplus_{s=0}^2 L^2\Omega^s(M).$$

Then, for all $m\in\mathbb{N}_0$, we can replace the $H^m$ norm on forms by
\begin{align*}
\lVert \theta\rVert_{H^m}&=\lVert \mathcal{P}_{\mathcal{H}}\theta\rVert_{L^2}+\lVert k^m\lVert P_k\theta\rVert_{L^2}\rVert_{\ell_k^2(\mathbb{N}_0+\lambda_1)}\\
&\sim \lVert \mathcal{P}_{\mathcal{H}}\theta\rVert_{L^2}+\lVert (-\Delta_H)^{m/2}(1-\mathcal{P}_{\mathcal{H}})\theta\rVert_{L^2}.
\end{align*}

\subsection{The vector Laplacian operator}

We now make a few remarks related to the choice of the Laplacian operator $\Delta_M$ as either
the Hodge Laplacian $\Delta_H$, the connection (Bochner) Laplacian $\Delta_B$, or the deformation
Laplacian $\Delta_D$.  

The choice of this operator in the Navier-Stokes system (\ref{eq:NavierStokes})
affects the class of initial data that one can consider to obtain global results by perturbing 
the classical flat background theory.  For instance, if we choose the 
Hodge-Laplacian, the ``global existence for small data'' result on flat spaces will generalize 
to initial data near the space of harmonic vector fields (satisfying $\Delta_{H}X=0$).  
On the other hand, if we choose the deformation Laplacian, then,
as proved in \cite{pruessNavierStokesEquationsSurfaces2020}, the flat background
small data theory leads to results for initial data in small neighborhoods of
the space of {\it Killing vector fields}, which satisfy the 
equation $\Delta_{D}X=0$.  Both spaces are finite-dimensional 
and algebraically rigid, leading to their own respective theories.

Nevertheless, for the results of this paper, we may take $\Delta_M$ to be any of these three choices.  Indeed, our arguments 
will rely on the following properties, which are valid for all three operators:
\begin{enumerate}
\item[(i)] $\Delta_{M}=\Delta_{H}+F$ where $F$ is a differential operator of
order 0 (with smooth coefficients),
\item[(ii)] $\Delta_{M}$ is self-adjoint on $L^{2}\mathfrak{X}(M)$, and
\item[(iii)] $\Delta_M\leq 0$.
\end{enumerate}

Note that condition (iii) amounts to a choice of convention for signs, and
corresponds to the physical dissipation of energy.

\subsection{The vorticity equation}
\label{sec:vorticity}

Consider the Navier-Stokes equation as in (\ref{eq:NavierStokes}).  For $f\in C^\infty(M)$, define 
$\curl f=-\left(\star df\right)^{\sharp}$.  Because we are in two spatial dimensions, we have 
$dU^{\flat}=\omega\vol$ for some $\omega\in C^{\infty}\left(M\right)$.  The {\it vorticity} 
$\omega$ is then defined by setting $$\omega:=\star d\flat U=\star d\mathcal{P}_{2}\flat U.$$

The advantage of working in two spatial dimensions is that the vorticity can be identified
with a scalar function, and we can control its $L^{2}$ norm (via the enstrophy estimate).  It is 
trivial to check that $d:\mathcal{P}_{2}\Omega^{1}\left(M\right)\to\mathcal{P}_{1}\Omega^{2}\left(M\right)$
is bijective with inverse $R_{d}=\delta\left(-\Delta_{H}\right)^{-1}=\left(-\Delta_{H}\right)^{-1}\delta$,
so $\mathcal{P}_{2}\flat U=\delta\left(-\Delta_{H}\right)^{-1}\star\omega=-\star d\left(-\Delta\right)^{-1}\omega$.  Moreover, 
$\pi_{0}\omega=0$, and, since $U=\mathbb{P}U$, $$\left(1-\mathcal{P}_{\mathcal{H}}\right)U=\mathcal{P}_{2}U=\curl\left(-\Delta\right)^{-1}\omega.$$

We use these identities to reformulate the system (\ref{eq:NavierStokes}) in terms of $\omega$.  For this, we begin with a lemma relating the Lie
derivative and the musical isomorphism.
\begin{lem}
\label{lem:small_lie_lemma}
Let $X$ be a smooth vector field on $M$.  Then $$\mathcal{L}_{X}X^{\flat}=\nabla_{X}X^{\flat}+d\left(\frac{|X|^{2}}{2}\right).$$
\end{lem}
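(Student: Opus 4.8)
The plan is to compute $\mathcal{L}_X X^\flat$ directly using Cartan's formula $\mathcal{L}_X = d\circ \iota_X + \iota_X \circ d$ applied to the $1$-form $X^\flat$. The term $d(\iota_X X^\flat)$ is immediate: $\iota_X X^\flat = X^\flat(X) = \langle X, X\rangle = |X|^2$, so this contributes $d(|X|^2)$. The remaining term is $\iota_X(dX^\flat)$, and the task reduces to showing $\iota_X(dX^\flat) = \nabla_X X^\flat - \tfrac{1}{2}d(|X|^2)$, i.e. that $\iota_X(dX^\flat) + \tfrac12 d(|X|^2) = \nabla_X X^\flat$.

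The key step is to rewrite $dX^\flat$ in terms of the Levi-Civita connection. Since $\nabla$ is torsion-free, for any $1$-form $\alpha$ one has $d\alpha = \text{Alt}(\nabla\alpha)$, concretely $(dX^\flat)_{ab} = \nabla_a X_b - \nabla_b X_a$ in abstract index notation (using metric compatibility to identify $X^\flat$ with lowering the index on $X$). Contracting with $X^a$ gives $(\iota_X dX^\flat)_b = X^a(\nabla_a X_b - \nabla_b X_a) = (\nabla_X X^\flat)_b - X^a \nabla_b X_a$. It then remains to identify $X^a \nabla_b X_a$ with $\tfrac12 \nabla_b(|X|^2) = \tfrac12 (d|X|^2)_b$, which is exactly the metric-compatibility identity $\nabla_b\langle X, X\rangle = 2\langle \nabla_b X, X\rangle = 2X^a\nabla_b X_a$. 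Assembling Cartan's formula then yields $\mathcal{L}_X X^\flat = \nabla_X X^\flat - \tfrac12 d(|X|^2) + d(|X|^2) = \nabla_X X^\flat + d\big(\tfrac{|X|^2}{2}\big)$.

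I do not anticipate a genuine obstacle here; this is a short index computation relying only on torsion-freeness and metric compatibility of the Levi-Civita connection (both collected among the standard identities in Appendix \ref{appendix:notation}). The only point requiring a little care is the consistent use of the musical isomorphism: one must be careful that $\nabla_X(X^\flat) = (\nabla_X X)^\flat$, which is again metric compatibility, so that all three expressions in the identity are unambiguously $1$-forms. An alternative, coordinate-free route would be to test both sides against an arbitrary vector field $Y$ and use the formula $(\mathcal{L}_X X^\flat)(Y) = X\langle X,Y\rangle - \langle X, [X,Y]\rangle$ together with $[X,Y] = \nabla_X Y - \nabla_Y X$; this leads to the same cancellation but is slightly longer to bookkeep, so I would present the index computation as the main argument.
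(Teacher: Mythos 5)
Your proposal is correct, and your main argument takes a genuinely different (though equally standard) route from the paper. The paper's proof is exactly the ``alternative, coordinate-free route'' you mention at the end: it evaluates $\left(\mathcal{L}_{X}X^{\flat}\right)(Y)$ for an arbitrary $Y\in\mathfrak{X}(M)$ via the Leibniz rule $\left(\mathcal{L}_{X}X^{\flat}\right)(Y)=\mathcal{L}_{X}\left\langle X,Y\right\rangle -\left\langle X,[X,Y]\right\rangle$, then uses metric compatibility and $\nabla_{X}Y-[X,Y]=\nabla_{Y}X$ to produce $\left\langle \nabla_{X}X,Y\right\rangle +\left\langle X,\nabla_{Y}X\right\rangle$, identifying the second term with $d\left(\tfrac{|X|^{2}}{2}\right)\cdot Y$. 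Your primary argument instead goes through Cartan's formula $\mathcal{L}_{X}=d\circ\iota_{X}+\iota_{X}\circ d$ together with the identity $(dX^{\flat})_{ab}=\nabla_{a}X_{b}-\nabla_{b}X_{a}$ for the torsion-free connection; all steps check out, including the sign bookkeeping that turns $-X^{a}\nabla_{b}X_{a}=-\tfrac12\nabla_{b}|X|^{2}$ into the net $+\tfrac12 d|X|^{2}$. What the paper's route buys is that it needs only the two connection identities (metric compatibility and torsion-freeness) applied to scalars and brackets, with no appeal to Cartan's formula or to the expression of $d$ via $\nabla$; what your route buys is that the appearance of the gradient of $\tfrac{|X|^{2}}{2}$ is immediate from $\iota_{X}X^{\flat}=|X|^{2}$ rather than emerging at the end of the computation. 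Either is acceptable here.
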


\begin{proof}
We compute, for any smooth vector field $Y\in\mathfrak{X}(M)$, 
\begin{align*}
\left(\mathcal{L}_{X}X^{\flat}\right)\left(Y\right) & =\mathcal{L}_{X}\left\langle X,Y\right\rangle -\left\langle X,[X,Y]\right\rangle \\
&=\left\langle \mathcal{\nabla}_{X}X,Y\right\rangle +\left\langle X,\nabla_{X}Y-[X,Y]\right\rangle \\
 & =\left\langle \mathcal{\nabla}_{X}X,Y\right\rangle +\left\langle X,\nabla_{Y}X\right\rangle \\
&=\left(\mathcal{\nabla}_{X}X^{\flat}\right)\cdot Y+d\left(\frac{|X|^{2}}{2}\right)\cdot Y.
\end{align*}
This completes the proof of the lemma.
\end{proof}

We apply the musical isomorphism to the Navier-Stokes equation.  In view of Lemma \ref{lem:small_lie_lemma}, this gives
\[
0=\partial_{t}U^{\flat}+\mathcal{L}_{U}U^{\flat}-\nu\left(\Delta_{H}+F\right)U^{\flat}+d\left(p-\frac{|U|^{2}}{2}\right).
\]
Now, an application of the exterior derivative leads to
\begin{align*}
0 & =\partial_{t}\omega\vol+\left(\mathcal{L}_{U}\omega\right)\vol+\omega(\Div U)\vol-\nu\left(\Delta\omega\vol+dF\flat U\right),
\end{align*}
which, using $\Div U=0$, becomes 
\begin{align*}
0 & =\partial_{t}\omega+\left\langle U,\nabla\omega\right\rangle -\nu\Delta\omega-\nu\star dF\flat U\\
 & =\partial_{t}\omega+\left\langle \curl\left(-\Delta\right)^{-1}\omega,\nabla\omega\right\rangle +\left\langle \mathcal{P}_{\mathcal{H}}U,\nabla\omega\right\rangle \\
&\hspace{0.2in}-\nu\Delta\omega-\nu\star dF\flat\left(\curl\left(-\Delta\right)^{-1}\omega+\mathcal{P}_{\mathcal{H}}U\right).
\end{align*}

Because $\omega$ only governs $\left(1-\mathcal{P}_{\mathcal{H}}\right)U$,
we cannot completely remove the coupling with the velocity equation.  Fortunately,
$\left\Vert U\right\Vert _{L^{2}}$ stays bounded (in view of the energy inequality),
so $\left\Vert \mathcal{P}_{\mathcal{H}}U\right\Vert _{C^{m}}\lesssim_m\left\Vert U\right\Vert _{L^{2}}$
stays bounded for all $m\in\mathbb{N}_{0}$.  This means that the harmonic part is relatively easy
to control.

Collecting these arguments, we have arrived at the following equivalent formulation of the Navier-Stokes system:
\begin{equation}
\left\{ \begin{array}{rl}
U & =\mathcal{P}_{\mathcal{H}}U+\curl\left(-\Delta\right)^{-1}\omega\\
0 & =\partial_{t}\omega+\left\langle \curl\left(-\Delta\right)^{-1}\omega,\nabla\omega\right\rangle +\left\langle \mathcal{P}_{\mathcal{H}}U,\nabla\omega\right\rangle \\
&\hspace{0.2in}+\nu D^{2}\left(-\Delta\right)^{-1}\omega+\nu D^{1}\mathcal{P}_{\mathcal{H}}U-\nu\Delta\omega\\
0 & =\partial_{t}\mathcal{P}_{\mathcal{H}}U+\mathcal{P}_{\mathcal{H}}\nabla_U U+\nu\mathcal{\mathcal{P}_{\mathcal{H}}}D^{0}U
\end{array}\right.\label{eq:vorticity}
\end{equation}
We note that the condition $\Div U=0$ is already implied, and operators
implied within $D^{2},D^{1},D^{0}$ can be explicitly written out.

\subsection{Galerkin approximation and a priori estimates}
\label{sec:galerkin}

Let $\lambda_{1}$ be the smallest nonzero mode as in (\ref{eq:lambda1}), and 
let $Z\subset\mathbb{N}_{0}+\lambda_{1}$ be a finite subset selecting the modes 
included in the Galerkin approximation $$\omega_{Z}=\sum\limits _{k\in Z}P_{k}\omega_{Z}.$$ 
Then the truncated equation is
\begin{equation}
\left\{ \begin{array}{rl}
U_{Z}= & \mathcal{P}_{\mathcal{H}}U_{Z}+\curl\left(-\Delta\right)^{-1}\sum\limits _{k\in Z}P_{k}\omega_{Z},\\
0= & \partial_{t}P_{k}\omega_{Z}+\sum\limits _{l_{1},l_{2}\in Z}P_{k}\left\langle \curl\left(-\Delta\right)^{-1}P_{l_{1}}\omega_{Z},\nabla P_{l_{2}}\omega_{Z}\right\rangle \\
&\hspace{0.2in}+\sum\limits _{l\in Z}P_{k}\left\langle \mathcal{P}_{\mathcal{H}}U_{Z},\nabla P_{l}\omega_{Z}\right\rangle+\sum\limits _{l\in Z}\nu P_{k}D^{2}\left(-\Delta\right)^{-1}P_{l}\omega_{Z}\\
&\hspace{0.2in}+\nu P_{k}D^{1}\mathcal{P}_{\mathcal{H}}U_{Z}-\nu\Delta P_{k}\omega_{Z},\;\;\;\;\forall k\in Z,\\
0= & \partial_{t}\mathcal{P}_{\mathcal{H}}U_{Z}+\mathcal{P}_{\mathcal{H}}\nabla_{U_{Z}}U_{Z}+\nu\mathcal{\mathcal{P}_{\mathcal{H}}}D^{0}U_{Z}.
\end{array}\right.\label{eq:vorticity-galerkin}
\end{equation}

A more explicit form, without $D^{2},D^{1},D^{0}$, is
\begin{equation}
\left\{ \begin{array}{rl}
U_{Z} & =\mathcal{P}_{\mathcal{H}}U_{Z}+\curl\left(-\Delta\right)^{-1}\omega_{Z},\\
0 & =\partial_{t}\omega_{Z}+P_{Z}\nabla_{U_{Z}}\omega_{Z}-\nu P_{Z}\star d\Delta_{M}\flat U_{Z},\\
0 & =\partial_{t}\mathcal{P}_{\mathcal{H}}U_{Z}+\mathcal{P}_{\mathcal{H}}\nabla_{U_{Z}}U_{Z}-\nu\mathcal{P}_{\mathcal{H}}\Delta_{M}U_{Z},
\end{array}\right.\label{eq:vorticity-galerkin-simple}
\end{equation}
where $P_{Z}:=\sum_{k\in Z}P_{k}$. 

Selecting a finite basis for $\Range\left(\mathcal{P}_{\mathcal{H}}\right)$
and $\Range\left(P_{k}\right)$ for each $k$, we obtain a smooth 
finite-dimensional ODE system (with the unknowns being an analogue of 
the sequence of Fourier coefficients, depending only on time).

This system has a smooth solution on $\left[0,T_{Z}\right)$ for some $T_{Z}\in(0,\infty]$ (by Picard's theorem).
Standard Hodge theory now shows that the solution $U_{Z}$ solves a truncated form of the Navier-Stokes equation.

\begin{lem}
Let $Z\subset\mathbb{N}_0+\lambda_1$ be a finite set.  Suppose that $U_Z$ solves (\ref{eq:vorticity-galerkin-simple}).  Then $U_Z$ is also a solution to the equation
\begin{equation}
\partial_{t}U_{Z}+\left(\mathcal{P}_{\mathcal{H}}+P_{Z}\mathcal{P}_{2}\right)\nabla_{U_{Z}}U_{Z}-\nu\left(\mathcal{P}_{\mathcal{H}}+P_{Z}\mathcal{P}_{2}\right)\Delta_{M}U_{Z}=0.\label{eq:galerkin_solves_NS}
\end{equation}
\end{lem}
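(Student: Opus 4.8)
The plan is to show that the vector field
\[
V \;:=\; \partial_{t}U_{Z}+\bigl(\mathcal{P}_{\mathcal{H}}+P_{Z}\mathcal{P}_{2}\bigr)\bigl(\nabla_{U_{Z}}U_{Z}-\nu\Delta_{M}U_{Z}\bigr)
\]
vanishes identically, by checking separately that each of its three Hodge components $\mathcal{P}_{1}V$, $\mathcal{P}_{\mathcal{H}}V$, $\mathcal{P}_{2}V$ is zero; since $1=\mathcal{P}_{1}+\mathcal{P}_{2}+\mathcal{P}_{\mathcal{H}}$ this gives $V=0$, which is precisely (\ref{eq:galerkin_solves_NS}). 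Throughout I use the commutation relations from Section \ref{sec:freq_cutoff} and the identities from Section \ref{sec:vorticity}. In particular, the first line of (\ref{eq:vorticity-galerkin-simple}) shows that $U_{Z}$ is divergence-free (so $\mathcal{P}_{1}U_{Z}=0$ and $U_{Z}=(\mathcal{P}_{\mathcal{H}}+P_{Z}\mathcal{P}_{2})U_{Z}$, using that $P_{k}$ commutes with $\curl(-\Delta)^{-1}$) and that $\star d\flat U_{Z}=\omega_{Z}$, where we use $\pi_{0}\omega_{Z}=0$, valid since $Z\subset\mathbb{N}_{0}+\lambda_{1}$.

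For the exact part, $\mathcal{P}_{1}$ annihilates the range of $\mathcal{P}_{\mathcal{H}}+P_{Z}\mathcal{P}_{2}$, because $\mathcal{P}_{1}\mathcal{P}_{\mathcal{H}}=0$ and $\mathcal{P}_{1}P_{Z}\mathcal{P}_{2}=P_{Z}\mathcal{P}_{1}\mathcal{P}_{2}=0$; hence $\mathcal{P}_{1}V=\partial_{t}\mathcal{P}_{1}U_{Z}=0$. For the harmonic part, every $k\in Z$ is positive, so $\mathcal{P}_{\mathcal{H}}P_{k}=0$ and therefore $\mathcal{P}_{\mathcal{H}}(\mathcal{P}_{\mathcal{H}}+P_{Z}\mathcal{P}_{2})=\mathcal{P}_{\mathcal{H}}$; consequently $\mathcal{P}_{\mathcal{H}}V=\partial_{t}\mathcal{P}_{\mathcal{H}}U_{Z}+\mathcal{P}_{\mathcal{H}}\nabla_{U_{Z}}U_{Z}-\nu\mathcal{P}_{\mathcal{H}}\Delta_{M}U_{Z}$, which is exactly the left-hand side of the third equation in (\ref{eq:vorticity-galerkin-simple}) and hence vanishes.

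The substance is in $\mathcal{P}_{2}V$, and this is where the vorticity equation enters. Since $d$ kills exact and harmonic $1$-forms, $\star d V^{\flat}=\star d\,\mathcal{P}_{2}V^{\flat}$, and on $1$-forms $\star d(\mathcal{P}_{\mathcal{H}}+P_{Z}\mathcal{P}_{2})=P_{Z}\star d$. Applying $\star d$ to $V^{\flat}$, using that $\star d$ commutes with $\partial_{t}$ and with $\Delta_{H}$, that $\Delta_{M}=\Delta_{H}+F$, and the identity (valid since $\Div U_{Z}=0$, by Lemma \ref{lem:small_lie_lemma} and Cartan's formula $d\mathcal{L}_{U_{Z}}=\mathcal{L}_{U_{Z}}d$)
\[
\star d\,\nabla_{U_{Z}}U_{Z}^{\flat}=\star d\,\mathcal{L}_{U_{Z}}U_{Z}^{\flat}=\star\mathcal{L}_{U_{Z}}(\omega_{Z}\vol)=\langle U_{Z},\nabla\omega_{Z}\rangle,
\]
one obtains
\[
\star d V^{\flat}=\partial_{t}\omega_{Z}+P_{Z}\langle U_{Z},\nabla\omega_{Z}\rangle-\nu P_{Z}\star d\,\Delta_{M}\flat U_{Z},
\]
which is exactly the left-hand side of the second equation in (\ref{eq:vorticity-galerkin-simple}), hence $\star d V^{\flat}=0$. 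Since $\mathcal{P}_{2}V^{\flat}\in\mathcal{P}_{2}\Omega^{1}(M)$ and $d:\mathcal{P}_{2}\Omega^{1}(M)\to\mathcal{P}_{1}\Omega^{2}(M)$ is an isomorphism with inverse $R_{d}$, while $\star$ is invertible, the identity $\star d\,\mathcal{P}_{2}V^{\flat}=0$ forces $\mathcal{P}_{2}V=0$. Combining the three cases yields $V=0$.

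I do not anticipate a genuine obstacle: the argument is a bookkeeping exercise in Hodge projections and the commutation relations of Section \ref{sec:freq_cutoff}. The two points that deserve a little care are the identity $\star d\,\nabla_{U_{Z}}U_{Z}^{\flat}=\langle U_{Z},\nabla\omega_{Z}\rangle$, which relies on $\Div U_{Z}=0$ so that the term $\omega_{Z}\Div U_{Z}$ produced by $\mathcal{L}_{U_{Z}}\vol$ drops out, and the observation that the pure-gradient ($\mathcal{P}_{1}$) part of the reconstructed momentum equation is automatically absent, again because $U_{Z}$ is divergence-free.
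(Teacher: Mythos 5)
Your proof is correct and uses essentially the same argument as the paper: the key steps in both are the identity $\star d\flat\nabla_{U_{Z}}U_{Z}=\nabla_{U_{Z}}\omega_{Z}$ (via Lemma \ref{lem:small_lie_lemma}, Cartan's formula, and $\Div U_{Z}=0$) and the invertibility of $d$ (equivalently of $\curl(-\Delta)^{-1}$) between the co-exact spaces. The only difference is organizational -- you Hodge-decompose the residual $V$ and kill each piece, while the paper computes $\partial_{t}U_{Z}$ directly from the evolution of $\mathcal{P}_{\mathcal{H}}U_{Z}$ and $\omega_{Z}$ and matches terms -- which amounts to the same computation.
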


\begin{proof}
We have $\omega_{Z}=\star d\flat U_{Z}$ and
\begin{align*}
-\partial_{t}U_{Z}&=-\partial_{t}\mathcal{P}_{\mathcal{H}}U_{Z}-\curl\left(-\Delta\right)^{-1}\partial_{t}\omega_{Z}\\
&=\mathcal{P}_{\mathcal{H}}\nabla_{U_{Z}}U_{Z}-\nu\mathcal{P}_{\mathcal{H}}\Delta_{M}U_{Z}\\
&\hspace{0.2in}+\curl\left(-\Delta\right)^{-1}\left(P_{Z}\nabla_{U_{Z}}\omega_{Z}-\nu P_{Z}\star d\Delta_{M}\flat U_{Z}\right).
\end{align*}

In view of this, we want to show 
\[
\mathcal{P}_{2}\left(P_{Z}\nabla_{U_{Z}}U_{Z}-\nu P_{Z}\Delta_{M}U_{Z}\right)=\curl\left(-\Delta\right)^{-1}\left(P_{Z}\nabla_{U_{Z}}\omega_{Z}-\nu P_{Z}\star d\Delta_{M}\flat U_{Z}\right)
\]
We showed above that $\curl\left(-\Delta\right)^{-1}:\mathcal{P}_{2}\Omega^{0}\to\mathcal{P}_{2}\mathfrak{X}$
is bijective with inverse $\star d\flat$. Also $d\mathcal{P}_{2}=d,$
so now we only need to show 
\[
\star d\flat\left(P_{Z}\nabla_{U_{Z}}U_{Z}-\nu P_{Z}\Delta_{M}U_{Z}\right)=P_{Z}\nabla_{U_{Z}}\omega_{Z}-\nu P_{Z}\star d\Delta_{M}\flat U_{Z}
\]

For this, note that by by \Lemref{small_lie_lemma}, we have
\begin{align*}
\star d\flat P_{Z}\nabla_{U_{Z}}U_{Z}&=P_{Z}\star d\mathcal{L}_{U_{Z}}\flat U_{Z}\\
&=P_{Z}\star\mathcal{L}_{U_{Z}}d\flat U_{Z}\\
&=P_{Z}\star\mathcal{L}_{U_{Z}}\left(\omega_{Z}\vol\right)\\
&=P_{Z}\star\left(\left(\mathcal{\nabla}_{U_{Z}}\omega_{Z}\right)\vol\right)\\
&=P_{Z}\mathcal{\nabla}_{U_{Z}}\omega_{Z},
\end{align*}
which completes the proof.
\end{proof}

We aim to take the limit $Z\uparrow\mathbb{N}_{0}$.  In order to obtain convergence, we will need a priori estimates that are independent of the truncation
set $Z$.  The first such estimate is the {\it energy inequality}, $$\left\Vert U_{Z}(t)\right\Vert _{L^{2}}\leq\left\Vert U_{Z}\left(0\right)\right\Vert _{L^{2}},$$
which follows from (\ref{eq:galerkin_solves_NS}) and the fact that $\Delta_{M}\leq0$.  In particular, the energy estimate implies 
that $\mathcal{P}_{\mathcal{H}}U_{Z}(t)$ stays bounded in the $C^{\infty}$ topology.

The second a priori estimate we will use is the {\it enstrophy estimate}, 
\begin{align}
\left\Vert \omega_{Z}\left(t\right)\right\Vert _{L^{2}}\lesssim_{\neg Z}\left(\left\Vert \omega_{Z}\left(0\right)\right\Vert _{L^{2}}+\left\Vert U_{Z}\left(0\right)\right\Vert _{L^{2}}\right)e^{\nu Ct},\label{eq:enstrophy}
\end{align}
which we will show holds for some $C$ depending only on $M$, not $Z$.  Indeed, observe that 
\begin{align*}
0 & =\left\langle \left\langle \partial_{t}\omega_{Z},\omega_{Z}\right\rangle \right\rangle +\left\langle \left\langle \nabla_{U_{Z}}\omega_{Z},\omega_{Z}\right\rangle \right\rangle -\nu\left\langle \left\langle \star d\left(\Delta_{H}+F\right)\flat U_{Z},\omega_{Z}\right\rangle \right\rangle \\
 & =\partial_{t}\left(\frac{\left\Vert \omega_{Z}\right\Vert _{2}^{2}}{2}\right)-\nu\left\langle \left\langle \star d\Delta_{H}\flat U_{Z},\star d\flat U_{Z}\right\rangle \right\rangle -\nu\left\langle \left\langle D^{1}U_{Z},\omega_{Z}\right\rangle \right\rangle \\
 & =\partial_{t}\left(\frac{\left\Vert \omega_{Z}\right\Vert _{2}^{2}}{2}\right)+\nu\left\langle \left\langle \Delta_{H}\flat U_{Z},\Delta_{H}\flat U_{Z}\right\rangle \right\rangle -\nu\left\langle \left\langle D^{1}U_{Z},\omega_{Z}\right\rangle \right\rangle,
\end{align*}
and thus $\partial_{t}\left(\left\Vert \omega_{Z}\right\Vert _{2}^{2}\right)\lesssim\nu\left\Vert \omega_{Z}\right\Vert _{2}\left\Vert U_{Z}\right\Vert _{H^{1}}$.
But, by the Poincare inequality \cite{huynhHodgetheoreticAnalysisManifolds2019},
$$\left\Vert U_{Z}\right\Vert _{H^{1}}\sim_{M}\left\Vert \mathcal{P}_{\mathcal{H}}U_{Z}\right\Vert _{2}+\left\Vert \delta\flat U_{Z}\right\Vert _{2}+\left\Vert d\flat U_{Z}\right\Vert _{2}\lesssim\left\Vert U_{Z}\left(0\right)\right\Vert _{2}+\left\Vert \omega_{Z}\right\Vert _{2}.$$

So we have 
\begin{align*}
\partial_{t}\left(\left\Vert \omega_{Z}\left(t\right)\right\Vert _{2}^{2}+\left\Vert U_{Z}\left(0\right)\right\Vert _{2}^{2}\right)&\lesssim\nu\left(\left\Vert \omega_{Z}\left(t\right)\right\Vert _{2}+\left\Vert U_{Z}\left(0\right)\right\Vert _{2}\right)^{2}\\
&\sim\nu\left(\left\Vert \omega_{Z}\left(t\right)\right\Vert _{2}^{2}+\left\Vert U_{Z}\left(0\right)\right\Vert _{2}^{2}\right).
\end{align*}
An application of Gronwall's inequality now gives the enstrophy estimate (\ref{eq:enstrophy}).

\begin{rem}
As a particular consequence, note that by Picard's theorem and the fact that modes are finite,
the above bounds show that $T_{Z}=\infty$ and that $U_{Z}$ exists globally in time.

Note that the enstrophy is non-increasing when $\Delta_{M}=\Delta_{H}$
($F=0$). This is the case for flat spaces.
\end{rem}

The main a priori estimate for smooth convergence we establish in this paper is the following theorem.
\begin{thm}
\label{thm:main_a_priori}If for some $A_{0}\in\left(0,\infty\right)$
and $r>1$, $$\left\Vert U_{Z}\left(0\right)\right\Vert _{2}\leq A_{0}\quad\textrm{and}\quad \left\Vert P_{k}\omega_{Z}\left(0\right)\right\Vert _{2}\leq\frac{A_{0}}{\left|k\right|^{r}}\;\forall k\in Z,$$
then 
\[
\left\Vert P_{k}\omega_{Z}\left(t\right)\right\Vert _{2}\leq\frac{A^{*}(t)}{\left|k\right|^{r}}\;\forall t\geq0,\forall k\in Z
\]
for some smooth $A^{*}(t)$ depending on $r,\nu,M,A_{0}$ and not
$Z$.
\end{thm}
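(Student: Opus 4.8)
The plan is to run a \emph{geometric trapping} argument directly on the system of ODEs for the eigenspace-projected vorticity modes $P_k\omega_Z$, exactly in the spirit of Mattingly--Sinai, but with the nonlinear interaction terms controlled by the multilinear/microlocal machinery advertised in the introduction. Concretely, fix $r>1$ and set $a_k(t) := |k|^r \lVert P_k\omega_Z(t)\rVert_2$. We want to show that the region $\{\sup_k a_k \le A^*(t)\}$ is forward-invariant for a suitable smooth $A^*$ with $A^*(0)\ge A_0$. The starting point is the differential inequality obtained by pairing the $P_k\omega_Z$-equation from \eqref{eq:vorticity-galerkin} against $P_k\omega_Z$:
\[
\tfrac12\partial_t\lVert P_k\omega_Z\rVert_2^2
= -\nu\lVert \sqrt{-\Delta}\,P_k\omega_Z\rVert_2^2
 - \big\langle\!\big\langle P_k(\nabla_{U_Z}\omega_Z),P_k\omega_Z\big\rangle\!\big\rangle
 + \nu\big\langle\!\big\langle P_k(D^2(-\Delta)^{-1}\omega_Z + D^1\mathcal{P}_{\mathcal H}U_Z),P_k\omega_Z\big\rangle\!\big\rangle.
\]
The viscous term gives a gain: on the range of $P_k$ one has $\lVert\sqrt{-\Delta}\,P_k\omega_Z\rVert_2^2 \ge k^2\lVert P_k\omega_Z\rVert_2^2$, so this contributes $-\nu k^2\lVert P_k\omega_Z\rVert_2^2$, i.e. $-\nu k^2 a_k$ after normalization — the damping that makes trapping possible. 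The zeroth-order curvature terms ($F$, Ricci) and the harmonic-part coupling are lower order and easily absorbed: $\lVert D^2(-\Delta)^{-1}\omega_Z\rVert_2 \lesssim \lVert\omega_Z\rVert_2$ which is already enstrophy-bounded by \eqref{eq:enstrophy}, and $\lVert\mathcal{P}_{\mathcal H}U_Z\rVert_{C^m}\lesssim \lVert U_Z(0)\rVert_2 \le A_0$; these feed into $A^*$ through its dependence on $\nu, M, A_0$.

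The heart of the matter is the transport nonlinearity $\langle\!\langle P_k(\nabla_{U_Z}\omega_Z),P_k\omega_Z\rangle\!\rangle$. Writing $U_Z = \mathcal{P}_{\mathcal H}U_Z + \curl(-\Delta)^{-1}\omega_Z$ and decomposing $\omega_Z = \sum_{l_1}P_{l_1}\omega_Z$, $\nabla\omega_Z = \sum_{l_2}\nabla P_{l_2}\omega_Z$, one must bound each trilinear piece
\[
\Big|\big\langle\!\big\langle P_k\big\langle \curl(-\Delta)^{-1}P_{l_1}\omega_Z,\ \nabla P_{l_2}\omega_Z\big\rangle,\ P_k\omega_Z\big\rangle\!\big\rangle\Big|.
\]
On the torus the frequency support forces $k=l_1+l_2$ (or for Mattingly--Sinai, one of the two is forced); on a manifold the output frequency is only loosely constrained, so the summation over $(l_1,l_2)$ must be controlled by the off-diagonal decay of the paired frequency-localized product. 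This is where the multilinear eigenfunction estimates of Burq--Gérard--Tzvetkov enter (for the near-diagonal / "triangle" regime), together with the Hani-type bounds extended to more derivatives (for the "non-triangle" regime where $k$ is far from $\{l_1,l_2\}$ and one exploits rapid decay in $|k-l_1-l_2|$ via repeated integration by parts, stationary-phase style, against the extra regularity built into the $D^k$ schematic). The Fourier-decomposition technique referenced in the introduction is what allows the eigenspace projections $P_k$ (which don't multiply nicely) to be replaced by smooth functional-calculus cutoffs for the purpose of these multilinear estimates. The upshot I expect is a bound of the schematic form
\[
\Big|\big\langle\!\big\langle P_k(\nabla_{U_Z}\omega_Z),P_k\omega_Z\big\rangle\!\big\rangle\Big|
\ \lesssim_{M,r}\ a_k \sum_{l_1,l_2}\frac{c(k,l_1,l_2)}{|l_1|^{r-1}|l_2|^{r}}\,\frac{(\sup_j a_j)^2}{|k|^{r}},
\]
where the kernel $c(k,l_1,l_2)$ carries the spectral-localization loss (a small power of $\min(l_1,l_2)$, as noted for the sphere) together with enough decay in $|k-l_1-l_2|$ that, \emph{crucially}, the double sum over $l_1,l_2$ converges for $r>1$ and is bounded by a constant times $|k|^{r}$ times (a constant); here one uses $r>1$ decisively to make $\sum_l |l|^{-(r-1)}\cdot(\text{localization loss})$ summable against the output. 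Let me denote the resulting constant by $C_1=C_1(M,r)$.

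Granting such an estimate, the trapping step is routine. Let $S(t) := \sup_{k\in Z} a_k(t)$ (a Lipschitz function, with $Z$ finite). Combining the pieces, whenever $a_k(t) = S(t)$ we get
\[
\partial_t a_k \ \le\ -\nu k^2 a_k + C_1 S^2 + \nu C_2(M,A_0)\,|k|^{r-2}\cdot(\text{enstrophy bound}),
\]
and since $k\ge\lambda_1$ the damping $-\nu\lambda_1^2 S$ dominates once $S$ is large; more precisely one obtains a closed differential inequality $\dot S \le -\nu\lambda_1^2 S + C_1 S^2 + B(t)$ valid in the a.e. / Dini-derivative sense at points where the sup is attained, with $B(t)$ smooth, depending on $\nu,M,A_0$ via the enstrophy estimate \eqref{eq:enstrophy}, and \emph{not} on $Z$. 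Solving the scalar comparison ODE $\dot{A^*} = -\nu\lambda_1^2 A^* + C_1 (A^*)^2 + B(t)$ with $A^*(0) = A_0$ — which has a global smooth solution on $[0,\infty)$ because the forcing $B$ grows only exponentially and we may, if needed, absorb the quadratic term by working on the (automatically valid) regime after first improving to the enstrophy-controlled region, or simply note that the enstrophy bound already caps $S$ so the $S^2$ coefficient can be replaced by $C_1(\text{enstrophy bound})$, making the ODE linear — yields the desired $A^*(t)$ with $S(t)\le A^*(t)$ for all $t\ge 0$. A standard argument (comparison principle for the Lipschitz sup-envelope, e.g. via $\tfrac{d}{dt}\max_k a_k \le \max\{\partial_t a_k : a_k = \max\}$) makes this rigorous.

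The main obstacle is unquestionably the trilinear summation estimate — proving that the combination of (a) the weak spectral localization of products on a general manifold (only triangle-type constraints, and the genuine $\min(l_1,l_2)^{1/4}$ loss on the sphere), (b) the Burq--Gérard--Tzvetkov multilinear bounds in the near-diagonal regime, and (c) the extended Hani-type decay in the off-diagonal regime, assembles into a kernel whose double sum against the polynomial weights $|l_1|^{-(r-1)}|l_2|^{-r}$ is $O(1)$ for every $r>1$, uniformly in $k$ and in the Galerkin set $Z$. Everything else — the viscous gain, the harmonic-coupling and curvature terms, and the final Gronwall/comparison argument — is bookkeeping by comparison.
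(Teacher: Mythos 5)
Your overall architecture is the right one --- trapping for the weighted mode amplitudes, viscous domination of the trilinear transport term via Burq--G\'erard--Tzvetkov near the diagonal and Hani-type off-diagonal decay, and the Fourier trick to pass between $P_k$ and eigenspace projections --- and this matches the paper's strategy. But the closing step, where you convert the estimates into a bound on $S(t)=\sup_k a_k(t)$, has two genuine gaps.

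First, your differential inequality $\dot S\le-\nu\lambda_1^2S+C_1S^2+B(t)$ is quadratic in $S$, and the comparison ODE for it blows up in finite time. Your proposed fix --- ``the enstrophy bound already caps $S$'' --- conflates two different quantities: the enstrophy estimate (\ref{eq:enstrophy}) bounds $\lVert\omega_Z\rVert_2=\lVert\,\lVert P_l\omega_Z\rVert_2\rVert_{\ell^2_l}$, not the weighted sup $S=\sup_kk^r\lVert P_k\omega_Z\rVert_2$, which is precisely the quantity the theorem is trying to control. The correct structure, which is the actual content of the paper's viscous domination lemma (Lemma \ref{lem:viscous_domination_restated}), is that the trilinear sum is bounded by $AB/k^{r-7/4}$ --- \emph{linear} in the weighted-sup constant $A$ and \emph{linear} in the $L^2$ bound $B$. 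This comes about because in every frequency region exactly one of the two vorticity factors is summed via Cauchy--Schwarz against its $\ell^2_lL^2_x$ norm (hence against the a priori enstrophy bound $B$), while the other is estimated pointwise by $A/l^r$. Your schematic kernel bound with $(\sup_ja_j)^2$ misses this, and without it the Gronwall step does not close for general $r>1$.

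Second, replacing the damping at the maximizing frequency by the worst case $-\nu\lambda_1^2S$ throws away the mechanism that makes trapping work. After multiplying by $k^r$, the nonlinear forcing at output frequency $k$ is $\lesssim ABk^{7/4}$, which is \emph{not} uniformly bounded in $k$; it is only dominated by the dissipation $-\nu k^2a_k$ because $k^{7/4}=k^2\cdot k^{-1/4}$, i.e.\ there is a relative gain of $k^{1/4}$ at high frequency. The paper exploits exactly this: it fixes $T$, inflates the barrier constant to $A_1\sqrt{\mathcal{E}_T^*}$ so that all frequencies $k\le K_0$ are automatically strictly inside the trap by the enstrophy bound alone, and then verifies $\partial_t\lVert P_k\omega_Z\rVert_2^2<0$ on the boundary only for $k>K_0$, where $K_0$ is chosen so that $\sqrt{\mathcal{E}_T^*}/K_0^{1/4}\ll\nu$. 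Your argument, by contrast, must contend with the sup being attained at a low frequency, where the uniform damping $-\nu\lambda_1^2S$ does not dominate a forcing of size $\sim SBK_0^{7/4}$ unless you separately dispose of the low modes as the paper does (or accept exponential growth of $S$, which requires the linear-in-$S$ structure from the first point). Both gaps are repairable, and repairing them essentially reproduces the paper's barrier argument, but as written the proposal does not close.
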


Note that the hypotheses of Theorem \ref{thm:main_a_priori} implicitly yield 
$$\left\Vert w_{Z}\left(0\right)\right\Vert _{2}\lesssim A_{0}\left\Vert \frac{1}{k^{r}}\right\Vert _{l_{k}^{2}}\lesssim_{\neg Z} A_{0}.$$

We prove Theorem \ref{thm:main_a_priori} in Section \ref{sec:geom-trap} and Section \ref{sec:viscous} below; indeed, this
is the main task of the rest of this paper.  

Assume that we have established Theorem \ref{thm:main_a_priori}.  We now show how to conclude
the proof of our Theorem \ref{thm:general-manifold}, our main result on global regularity 
for solutions to the Navier-Stokes system (\ref{eq:NavierStokes}) on $M$.  To leverage 
the a priori bounds of Theorem \ref{thm:main_a_priori}, we begin with a short uniqueness lemma
for the class of smooth solutions.

\begin{lem}
\label{lem:uniqueness}
Any smooth solution to Navier-Stokes must be unique.
\end{lem}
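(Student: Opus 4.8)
The plan is to prove uniqueness via a standard energy estimate on the difference of two solutions, exploiting the two-dimensional structure.

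Suppose $U_1$ and $U_2$ are two smooth solutions on a common time interval $[0,T]$ with the same initial data $U_0$. Set $W = U_1 - U_2$, so $W(0) = 0$, and $W$ is a smooth, divergence-free vector field for each $t$. Subtracting the two copies of (\ref{eq:NavierStokes}) (in the equivalent form after applying the Leray projection $\mathbb{P}$, so that the pressure drops out), I would write the equation for $W$ schematically as
\[
\partial_t W + \mathbb{P}\Div(W\otimes U_1) + \mathbb{P}\Div(U_2\otimes W) - \nu\,\mathbb{P}\Delta_M W = 0.
\]
First I would test this against $W$ in $L^2\mathfrak{X}(M)$, i.e. pair with $\langle\langle\,\cdot\,,W\rangle\rangle$. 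The time-derivative term gives $\tfrac12\tfrac{d}{dt}\|W\|_{L^2}^2$; the viscous term gives $-\nu\langle\langle \Delta_M W, W\rangle\rangle \geq 0$ by property (iii) that $\Delta_M\leq 0$ (this term has a good sign and can be discarded, or kept if one wants a bit of room); the term $\langle\langle \mathbb{P}\Div(U_2\otimes W), W\rangle\rangle$ vanishes (or is harmless) because $U_2$ is divergence-free — this is the usual cancellation of the transport term, $\langle\langle \nabla_{U_2} W, W\rangle\rangle = \tfrac12\int_M \langle U_2, \nabla|W|^2\rangle = -\tfrac12\int_M (\Div U_2)|W|^2 = 0$ after accounting for the connection terms, which on a Riemannian manifold requires a short computation but follows from metric-compatibility of $\nabla$. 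The remaining term is the dangerous one: $\langle\langle \mathbb{P}\Div(W\otimes U_1), W\rangle\rangle$, which after integration by parts is bounded by $\|\nabla U_1\|_{L^\infty}\|W\|_{L^2}^2$ (or, using the 2D Ladyzhenskaya/Gagliardo–Nirenberg inequality together with the viscous term, by $\|U_1\|_{H^1}$-type quantities and an absorbed gradient term). Since $U_1$ is a fixed smooth solution on $[0,T]$, the quantity $C(t) := C(M,\nu)\|\nabla U_1(t)\|_{L^\infty}$ is continuous, hence locally bounded, on $[0,T]$.

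This yields the differential inequality $\tfrac{d}{dt}\|W(t)\|_{L^2}^2 \leq C(t)\,\|W(t)\|_{L^2}^2$ with $\|W(0)\|_{L^2} = 0$. An application of Gronwall's inequality gives $\|W(t)\|_{L^2} = 0$ for all $t\in[0,T]$, so $U_1\equiv U_2$; since smooth solutions exist globally by Theorem \ref{thm:general-manifold} (or rather, by the a priori bounds feeding into it), the interval $[0,T]$ is arbitrary and uniqueness holds for all time.

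The main obstacle is purely bookkeeping rather than conceptual: on a general Riemannian manifold one must be careful with the covariant derivatives and curvature terms hidden in $\Div(U\otimes U)$ and in $\Delta_M$, and verify that the transport-term cancellation still goes through (it does, by metric compatibility) and that the curvature/lower-order corrections $F$ in $\Delta_M = \Delta_H + F$ and in the nonlinearity only produce terms controlled by $\|W\|_{L^2}^2$ times a smooth coefficient — which is immediate since $F$ is order zero with smooth coefficients and $M$ is compact. No frequency decomposition or the machinery of Theorem \ref{thm:main_a_priori} is needed here; this is a soft argument that only uses smoothness of the two solutions and the sign of the viscosity.
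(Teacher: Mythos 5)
Your proposal is correct and follows essentially the same route as the paper: an $L^2$ energy estimate on the difference of two solutions, using the sign of $\nu\langle\langle\Delta_M W,W\rangle\rangle$, the cancellation of the transport term $\langle\langle\nabla_{U_2}W,W\rangle\rangle$ by divergence-freeness and metric compatibility, the bound $|\langle\langle\nabla_W U_1,W\rangle\rangle|\leq\|\nabla U_1\|_{\infty}\|W\|_2^2$, and Gronwall. The paper's version is just a more compressed writeup of exactly this argument.
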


\begin{proof}
Let $U$ and $U+V$ be 2 smooth solutions with the same initial data
(i.e. $V(0)=0$). Then $0=\partial_{t}V+\mathbb{P}\left(\nabla_{V}U+\nabla_{U+V}V\right)-\nu\Delta_{M}V$,
which implies
\[
\partial_{t}\left(\frac{\left\Vert V\right\Vert _{2}^{2}}{2}\right)=-\left\langle \left\langle \nabla_{V}U,V\right\rangle \right\rangle +\nu\left\langle \left\langle \Delta_{M}V,V\right\rangle \right\rangle \leq\left\Vert V\right\Vert _{2}^{2}\left\Vert \nabla U\right\Vert _{\infty}
\]
As $V(0)=0$, by Gronwall $V=0$.
\end{proof}

We now complete the proof of Theorem \ref{thm:general-manifold}.

\begin{proof}[Proof of Theorem \ref{thm:general-manifold}]

Suppose $U_0$ is smooth.  Then, choosing any $r>1$, we may apply Theorem 
\ref{thm:main_a_priori} to see that the sequence $(U_{Z})$ remains bounded 
in $C_{t,\mathrm{loc}}H_{x}^{\infty}$ as the truncation set
$Z$ expands to $\mathbb{N}_{0}+\lambda_{1}$.  

Using the usual exchange of one time derivative for spatial derivatives 
in our Navier-Stokes system, we therefore obtain uniform bounds in 
$C_{t,\mathrm{loc}}^{\infty}H_{x}^{\infty}$.  By the Sobolev embedding, 
it follows that there is a subsequence $(U_{Z_{i}})_{i\geq 0}$ converging
to a smooth solution $U$ (as $P_{Z}$ is a contraction on all $H^{m}$).

This shows that there exists a global smooth solution with initial data $U_0$.  In view of the
uniqueness result for solutions of this class given in Lemma \ref{lem:uniqueness}, the proof
of the global regularity result is complete.
\end{proof}

\section{Geometric trapping}
\label{sec:geom-trap}

In this section and Section \ref{sec:viscous}, we prove Theorem \ref{thm:main_a_priori}.  As described in the introduction, the
argument follows the general pattern of the geometric trapping method of \cite{mattinglyElementaryProofExistence1999}.  In our setting,
this requires several additional ideas, due to the less well-behaved spectral properties of the manifold $M$.

Assume the hypotheses in \Thmref{main_a_priori} are satisfied.  Fix $T>0$.  We want to show that there
is a positive constant $A_{T}^{*}>1$ (depending on $r$, $\nu$, $M$, $A_{0}$, and $T$,
but not on $Z$) such that
\begin{align}
\left\Vert P_{k}\omega_{Z}\left(t\right)\right\Vert _{2}\leq\frac{A_{T}^{*}}{\left|k\right|^{r}}\;\forall t\in\left[0,T\right],\forall k\in Z.\label{eq:trapping-goal}
\end{align}

By the enstrophy estimate, there is $\mathcal{E}_{T}^{*}>1$, which
may depend on $\nu$, $A_{0}$, $M$, and $T$, but does not depend on $Z\subset \mathbb{N}_0+\lambda_1$, such that 
$$\left\Vert \omega_{Z}\left(t\right)\right\Vert _{L^{2}}^{2}+\left\Vert U_{Z}\left(t\right)\right\Vert _{2}^{2}\leq\mathcal{E}_{T}^{*}\quad\textrm{for}\,\, t\in\left[0,T\right].$$
This means that for any $K_{0}>\lambda_1$, setting $$A_{1}:=\left(K_{0}^{r}+1\right)\left(\frac{A_{0}}{\sqrt{\mathcal{E}_{T}^{*}}}+1\right)+\lambda_{1},$$
we have 
\[
\left\Vert P_{k}\omega_{Z}\left(t\right)\right\Vert _{2}<\frac{A_{1}\sqrt{\mathcal{E}_{T}^{*}}}{\left|k\right|^{r}}\quad \textrm{for}\,\, t\in[0,T]\,\,\textrm{and}\,\, k\in\left(\mathbb{N}_{0}+\lambda_{1}\right)\cap\left[\lambda_{1},K_{0}\right],
\]
with $A_{0}<A_{1}\sqrt{\mathcal{E}_{T}^{*}}$.  This estimate handles the contribution of 
low frequencies. We also have $$P_{k}\omega_{Z}=0\;\forall k\in\left(\mathbb{N}_{0}+\lambda_{1}\right)\backslash Z.$$

Pick $K_{0}$ large (to be chosen later). We will show that $\left(\left\Vert P_{k}\omega_{Z}\left(t\right)\right\Vert _{2}\right)_{k\in\mathbb{N}_{0}+\lambda_{1}}$
remains trapped in the set
\[
\mathcal{\mathfrak{S}}\left(K_{0}\right)=\left\{ \left(a_{k}\right)_{k\in\mathbb{N}_{0}+\lambda_{1}}:a_{k}\leq\frac{A_{1}\sqrt{\mathcal{E}_{T}^{*}}}{\left|k\right|^{r}}\;\forall k\in\mathbb{N}_{0}+\lambda_{1}\right\} 
\]
Note that $\left(\left\Vert P_{k}\omega_{Z}\left(0\right)\right\Vert _{2}\right)_{k\in\mathbb{N}_{0}+\lambda_{1}}\in\mathcal{\mathfrak{S}}\left(K_{0}\right)$.
The idea is that if $$\left(\left\Vert P_{k}\omega_{Z}\left(t\right)\right\Vert _{2}\right)_{k\in\mathbb{N}_{0}+\lambda_{1}}$$
were to exit the set $\mathcal{\mathfrak{S}}\left(K_{0}\right)$ for some $t>0$, it would have to go through 
\begin{align*}
\mathcal{\mathfrak{S}}_{\partial}\left(K_{0}\right)=\bigg\{ \left(a_{k}\right)_{k\in\mathbb{N}_{0}+\lambda_{1}}:&a_{k}\leq\frac{A_{1}\sqrt{\mathcal{E}_{T}^{*}}}{\left|k\right|^{r}}\;\forall k\in\mathbb{N}_{0}+\lambda_{1},\\
&a_{\widetilde{k}}=\frac{A_{1}\sqrt{\mathcal{E}_{T}^{*}}}{\left|\widetilde{k}\right|^{r}}\text{ for some }\widetilde{k}\in\left(\mathbb{N}_{0}+\lambda_{1}\right)\cap\left(K_{0},\infty\right)\bigg\} 
\end{align*}

In other words, for some $k>K_{0}$, $\left\Vert P_{k}\omega_{Z}\left(t\right)\right\Vert _{2}$
must reach and then exceed $\frac{A_{1}\sqrt{\mathcal{E}_{T}^{*}}}{\left|k\right|^{r}}$.  
If we can show that when $\left\Vert P_{k}\omega_{Z}\left(t\right)\right\Vert _{2}=\frac{A_{1}\sqrt{\mathcal{E}_{T}^{*}}}{\left|k\right|^{r}}$,
we have $$\partial_{t}\left(\left\Vert P_{k}\omega_{Z}\left(t\right)\right\Vert _{2}^{2}\right)<0,$$
then the evolution cannot exit the confining set $\mathfrak{S}(K_0)$. 

To show this, we aim to show that the diffusion $\Delta P_{k}\omega_{Z}$ in (\ref{eq:vorticity-galerkin}) is
the dominant term.  Since 
$$\left\Vert \Delta P_{k}\omega_{Z}(t)\right\Vert _{2}\sim\frac{A_{1}\sqrt{\mathcal{E}_{T}^{*}}}{\left|k\right|^{r-2}},$$
our goal is to show bounds which yield a stronger decay rate than $\frac{1}{\left|k\right|^{r-2}}$.  In particular,
we reduce the proof of Theorem \ref{thm:main_a_priori} to the following lemma, which we will prove in Section \ref{sec:viscous}.

\begin{lem}[Viscous domination]
\label{lem:viscous_domination} Let $r$, $A_{0}$ and $\mathcal{E}_{T}^{*}$
be as above.  Let $K_{0}\geq\lambda_{1}+10$ be arbitrary and set 
$$A_{1}:=\left(K_{0}^{r}+1\right)\left(\frac{A_{0}}{\sqrt{\mathcal{E}_{T}^{*}}}+1\right)+\lambda_{1}\left(M\right).$$ 

Assume at time $t\in\left[0,T\right]$, we have $\left\Vert P_{l}\omega_{Z}\left(t\right)\right\Vert _{2}\leq\frac{A_{1}\sqrt{\mathcal{E}_{T}^{*}}}{\left|l\right|^{r}}\;\forall l\in\mathbb{N}_{0}+\lambda_{1}$.
Then for any $k\in\left(\mathbb{N}_{0}+\lambda_{1}\right)\cap\left(K_{0},\infty\right)$, we have
\begin{align*}
&\sum\limits _{l_{1},l_{2}\in Z}\left\Vert P_{k}\left\langle \curl\left(-\Delta\right)^{-1}P_{l_{1}}\omega_{Z}(t),\nabla P_{l_{2}}\omega_{Z}(t)\right\rangle \right\Vert _{2}\\
&\hspace{1.2in}+\sum\limits _{l\in Z}\left\Vert P_{k}\left\langle \mathcal{P}_{\mathcal{H}}U_{Z}(t),\nabla P_{l}\omega_{Z}(t)\right\rangle \right\Vert _{2}\\
&\hspace{1.2in}+\sum\limits _{l\in Z}\nu\left\Vert P_{k}D^{2}\left(-\Delta\right)^{-1}P_{l}\omega_{Z}(t)\right\Vert _{2}\\
&\hspace{1.2in}+\nu\left\Vert P_{k}D^{1}\mathcal{P}_{\mathcal{H}}U_{Z}(t)\right\Vert_{2}\lesssim_{\nu,M,r,\neg Z,\neg T,\neg K_{0}}\frac{A_{1}\mathcal{E}_{T}^{*}}{\left|k\right|^{r-\frac{7}{4}}}.
\end{align*}
\end{lem}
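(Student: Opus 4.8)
The plan is to estimate each of the four sums appearing in the statement separately, using in each case the decay hypothesis $\|P_l\omega_Z(t)\|_2\le A_1\sqrt{\mathcal{E}_T^*}|l|^{-r}$ together with the boundedness of $\|\omega_Z(t)\|_2$ and $\|U_Z(t)\|_2$ by $\sqrt{\mathcal{E}_T^*}$. The guiding principle is that the diffusion term produces decay $|k|^{-(r-2)}$, so it suffices to beat this by a margin of $|k|^{1/4}$; the $\tfrac74$ in the exponent is exactly $2-\tfrac14$. First I would dispose of the two ``easy'' terms. For $\nu\|P_k D^1\mathcal{P}_{\mathcal{H}}U_Z\|_2$, since $\mathcal{P}_{\mathcal{H}}U_Z$ lies in a fixed finite-dimensional space of harmonic fields and $\|\mathcal{P}_{\mathcal{H}}U_Z\|_{C^m}\lesssim_m \|U_Z\|_2\le\sqrt{\mathcal{E}_T^*}$, we get that $D^1\mathcal{P}_{\mathcal{H}}U_Z$ is smooth with all Sobolev norms $\lesssim\sqrt{\mathcal{E}_T^*}$, so $\|P_k D^1\mathcal{P}_{\mathcal{H}}U_Z\|_2\lesssim_N |k|^{-N}\sqrt{\mathcal{E}_T^*}$ for every $N$, which is far better than needed (and summable to $A_1\mathcal{E}_T^*$ trivially since $A_1\ge1$). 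For $\nu\sum_l\|P_k D^2(-\Delta)^{-1}P_l\omega_Z\|_2$, I would use that $D^2(-\Delta)^{-1}$ is a pseudodifferential operator of order $0$ and, more to the point, that $P_k$ and $P_l$ are almost orthogonal under such operators: a commutator/functional-calculus argument (of the kind indicated by the schematic $[D^k,D^l]=D^{k+l-1}$ and the Fourier decomposition technique promised in the introduction) shows $\|P_k D^2(-\Delta)^{-1}P_l\|\lesssim_N \langle k-l\rangle^{-N}$, so only $l\sim k$ contributes and the sum is $\lesssim |k|^{-r}\cdot(A_1\sqrt{\mathcal{E}_T^*})\lesssim A_1\mathcal{E}_T^*|k|^{-r}$, again better than required.

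The substantive work is the first sum, the nonlinear term $\sum_{l_1,l_2}\|P_k\langle \curl(-\Delta)^{-1}P_{l_1}\omega_Z,\nabla P_{l_2}\omega_Z\rangle\|_2$, and I would split the $(l_1,l_2)$ lattice into the usual Littlewood–Paley regimes. Write $v_{l_1}:=\curl(-\Delta)^{-1}P_{l_1}\omega_Z$, so $\|v_{l_1}\|_2\lesssim l_1^{-1}\|P_{l_1}\omega_Z\|_2$ and $\|\nabla P_{l_2}\omega_Z\|_2\lesssim l_2\|P_{l_2}\omega_Z\|_2$, hence by Hölder the crude bound $\|\langle v_{l_1},\nabla P_{l_2}\omega_Z\rangle\|_1\lesssim l_1^{-1}l_2\|P_{l_1}\omega_Z\|_2\|P_{l_2}\omega_Z\|_2$; but we need an $L^2$ bound on the $P_k$-piece, so I would instead invoke the bilinear eigenfunction estimates of Burq–Gérard–Tzvetkov \cite{burqMultilinearEigenfunctionEstimates2005}: on a compact surface, $\|(\pi_a f)(\pi_b g)\|_{L^2}\lesssim \min(a,b)^{1/4}\|\pi_a f\|_2\|\pi_b g\|_2$, and correspondingly for the $P$-projections with $a,b$ replaced by the dyadic-type parameters $l_1,l_2$. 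This is precisely where the $\min(l_1,l_2)^{1/4}$ loss enters; crucially it is a $\min$, not a $\max$, so it costs at most $k^{1/4}$ only in the balanced regime $l_1\sim l_2\gtrsim k$ and is cheap otherwise. In the high-high regime $l_1\sim l_2\gtrsim k$ (the only way to feed frequency $k$ when both inputs are large), summing $\min(l_1,l_2)^{1/4}\cdot l_2\cdot l_1^{-1}\cdot (A_1\sqrt{\mathcal{E}_T^*})^2 l_1^{-r}l_2^{-r}$ over $l_1\sim l_2\gtrsim k$ gives $\lesssim A_1^2\mathcal{E}_T^* k^{1/4-2r+1}\cdot(\text{const})$; using $A_1\le A_1^2$ is not available, so here I would instead peel off one factor of $\|P_{l}\omega_Z\|_2\le\sqrt{\mathcal{E}_T^*}$ directly from the enstrophy bound rather than from the decay hypothesis, turning $(A_1\sqrt{\mathcal{E}_T^*})^2 l_1^{-r}l_2^{-r}$ into $A_1\sqrt{\mathcal{E}_T^*}\cdot\sqrt{\mathcal{E}_T^*}\cdot l^{-r}=A_1\mathcal{E}_T^* l^{-r}$ with the remaining geometric factors summing to $O(k^{1/4-1})$ relative to $k^{-r+2}$ — wait, the bookkeeping must be arranged so the final output is $A_1\mathcal{E}_T^* k^{-(r-7/4)}$, so I would keep the decay on whichever input has the larger frequency and the $\sqrt{\mathcal{E}_T^*}$ on the other. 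In the high-low regimes $l_1\gg l_2$ or $l_2\gg l_1$, one of the inputs has frequency $\sim k$ and I use the decay hypothesis on that one, the enstrophy bound on the other, the $\min(\cdot)^{1/4}$ factor is then the small frequency's and is harmless, and the sum over the small frequency converges absolutely.

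The second sum, $\sum_l\|P_k\langle \mathcal{P}_{\mathcal{H}}U_Z,\nabla P_l\omega_Z\rangle\|_2$, is intermediate: $\mathcal{P}_{\mathcal{H}}U_Z$ is smooth with $C^m$ norms $\lesssim\sqrt{\mathcal{E}_T^*}$, so multiplication by it is a (time-dependent) smoothing-stable zeroth order operator, and $P_k(\mathcal{P}_{\mathcal{H}}U_Z\cdot\nabla P_l\omega_Z)$ is again almost orthogonal in $k,l$ up to rapidly decaying tails, localizing to $l\sim k$; then $\|P_k\langle\mathcal{P}_{\mathcal{H}}U_Z,\nabla P_l\omega_Z\rangle\|_2\lesssim\sqrt{\mathcal{E}_T^*}\cdot k\cdot\|P_l\omega_Z\|_2\lesssim \sqrt{\mathcal{E}_T^*}\cdot k\cdot A_1\sqrt{\mathcal{E}_T^*}k^{-r}=A_1\mathcal{E}_T^* k^{-(r-1)}$, comfortably inside the claimed $k^{-(r-7/4)}$. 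The main obstacle is unquestionably the high-high interaction in the nonlinear term: controlling the frequency-$k$ output of a product of two high-frequency pieces on a manifold, where there is no triangle inequality for frequencies (unlike the torus), so one cannot simply exclude $l_1,l_2\ll k$ — the contributions from $l_1,l_2$ in the ``non-triangle'' region have to be handled by the extension of the Hani-type estimates \cite{haniGlobalWellposednessCubic2011} mentioned in the introduction, absorbing the extra derivatives via the $(-\Delta)^{-1}$ and exploiting the rapid decay of $\|P_k(fg)\|_2$ when $k$ is much larger than both input frequencies. Assembling the four contributions, each bounded by $\lesssim_{\nu,M,r} A_1\mathcal{E}_T^* |k|^{-(r-7/4)}$, yields the lemma.
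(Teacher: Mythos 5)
Your proposal follows essentially the same route as the paper: the Burq--G\'erard--Tzvetkov bilinear estimate (transferred to the window projections $P_l$ and the $(-\Delta)^{-1}$ weight via the Fourier-decomposition trick) combined with the decay hypothesis on the frequency-$\sim k$ factor and the $\ell^2$ enstrophy bound on the other, the Hani-type trilinear estimate for the distant non-triangle interactions, iterated-commutator almost-orthogonality for the linear terms of order $\le 1$, and smoothness of the harmonic part. The one inaccuracy is cosmetic rather than fatal: the $\min(l_1,l_2)^{1/4}$ loss is not ``cheap'' in the unbalanced regime $l_1\ll l_2\sim k$, since after Cauchy--Schwarz in $l_1$ it costs $k^{3/4}$, and it is precisely this regime (together with the balanced high-high one) that saturates the exponent $r-\tfrac{7}{4}$ --- the bookkeeping still closes exactly as you set it up.
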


To conclude this section, we give the proof of Theorem \ref{thm:main_a_priori} under the assumption that we have already
shown Lemma \ref{lem:viscous_domination}.

\begin{proof}[Proof of Theorem \ref{thm:main_a_priori}]

Fit $T>0$, choose $K_0$ large, and let $A_1$ be as defined above.  Then if $0<t<T$ is such that $$\left\Vert P_{k}\omega_{Z}\left(t\right)\right\Vert _{2}=\frac{A_{1}\sqrt{\mathcal{E}_{T}^{*}}}{\left|k\right|^{r}}$$
and $$\left\Vert P_{l}\omega_{Z}\left(t\right)\right\Vert _{2}\leq\frac{A_{1}\sqrt{\mathcal{E}_{T}^{*}}}{\left|l\right|^{r}}\quad\textrm{for all}\,\, l\in\mathbb{N}_{0}+\lambda_{1},$$
it follows that we have
\begin{align*}
\partial_{t}\left(\frac{\left\Vert P_{k}\omega_{Z}\left(t\right)\right\Vert _{2}^{2}}{2}\right)&=O_{\nu,M,r,\neg Z,\neg T,\neg K_{0}}\left(\frac{A_{1}\mathcal{E}_{T}^{*}}{\left|k\right|^{r-\frac{7}{4}}}\left\Vert P_{k}\omega_{Z}\left(t\right)\right\Vert _{2}\right)\\
&\hspace{1.2in}+\nu\left\langle \left\langle \Delta P_{k}\omega_{Z}(t),P_{k}\omega_{Z}\left(t\right)\right\rangle \right\rangle.
\end{align*}

Observe that $\left\langle \left\langle \Delta P_{k}\omega_{Z}(t),P_{k}\omega_{Z}\left(t\right)\right\rangle \right\rangle <0$
and $$\left|\left\langle \left\langle \Delta P_{k}\omega_{Z}(t),P_{k}\omega_{Z}\left(t\right)\right\rangle \right\rangle \right|\sim_{M,\neg Z,\neg T,\neg K_{0}}\frac{A_{1}\sqrt{\mathcal{E}_{T}^{*}}}{\left|k\right|^{r-2}}\left\Vert P_{k}\omega_{Z}\left(t\right)\right\Vert _{2}.$$
In particular, we can choose $K_{0}$ so that $\frac{\sqrt{\mathcal{E}_{T}^{*}}}{K_{0}^{1/4}}\ll_{\nu,M,r,\neg Z,\neg T,\neg K_{0}}1$, thereby obtaining $$\partial_{t}\left(\frac{\left\Vert P_{k}\omega_{Z}\left(t\right)\right\Vert _{2}^{2}}{2}\right)<0.$$

Then $\left(\left\Vert P_{k}\omega_{Z}\left(t\right)\right\Vert _{2}\right)\in\mathcal{\mathfrak{S}}\left(K_{0}\right)$ for all $t\in[0,T]$,
and the desired bound (\ref{eq:trapping-goal}) follows by setting $A_{T}^{*}=A_{1}\sqrt{\mathcal{E}_{T}^{*}}$.
\end{proof}

\section{Viscous domination}
\label{sec:viscous}

In this section, we prove \Lemref{viscous_domination}.  To frame our techniques, we recall a clasical result used in the study of the
nonlinear Schr\"odinger equation.

\begin{proposition}
\label{proposition:bilinear_1_old} For any $f,g\in L^{2}\left(M\right)$
and $l_{1},l_{2}\geq\lambda_{1}(M)$ and $a,b\in\mathbb{N}_{0}$,
we have
\[
\left\Vert \left(\nabla^{a}P_{l_{1}}f\right)*\left(\nabla^{b}P_{l_{2}}f\right)\right\Vert _{2}\lesssim_{M}\min\left(l_{1},l_{2}\right)^{\frac{1}{4}}l_{1}^{a}\left\Vert P_{l_{1}}f\right\Vert _{2}l_{2}^{b}\left\Vert P_{l_{2}}f\right\Vert _{2}
\]
where $\left(\nabla^{a}P_{l_{1}}f\right)*\left(\nabla^{b}P_{l_{2}}f\right)$
is schematic for any contraction of the two tensors.
\end{proposition}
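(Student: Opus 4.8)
The plan is to establish this as a derivative version of the bilinear eigenfunction estimate of Burq--G{\'e}rard--Tzvetkov on surfaces \cite{burqMultilinearEigenfunctionEstimates2005}, by retracing the structure of their proof rather than invoking it as a black box. Recall that on a compact surface $M$ one has $\left\Vert(\chi_{\mu}u)(\chi_{\nu}v)\right\Vert_{L^{2}(M)}\lesssim_{M}\min(\mu,\nu)^{1/4}\left\Vert\chi_{\mu}u\right\Vert_{2}\left\Vert\chi_{\nu}v\right\Vert_{2}$ for $\chi_{\mu}=\mathbf{1}_{[\mu,\mu+1)}(\sqrt{-\Delta})$, proved by writing each spectral cluster through the half-wave group ($\chi_{\mu}u\approx\int\hat{\rho}(t)e^{-it\mu}e^{it\sqrt{-\Delta}}u\,dt$ for $|t|$ small), inserting the standard Fourier-integral parametrix $e^{it\sqrt{-\Delta}}u\approx\sum_{j}\int e^{i\varphi_{j}(t,x,\xi)}a_{j}(t,x,\xi)\widehat{u}(\xi)\,d\xi$, and estimating the resulting bilinear oscillatory integral by stationary phase and the two-dimensional Sogge $L^{4}$ spectral-cluster bound. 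Two adjustments are needed. First, the contraction $\ast$ is, after a partition of unity, a finite sum of products of scalar components, each component of $\nabla^{a}P_{l_{1}}f$ being a local differential operator of order $a$ applied to $P_{l_{1}}f$; so it suffices to bound such scalar products. Second, applying a differential operator of order $a$ to the parametrix replaces $a_{j}$ by $(i\partial_{x}\varphi_{j})^{\otimes a}a_{j}+(\text{lower order})$; since $\varphi_{j}$ is homogeneous of degree one in $\xi$ one has $|\partial_{x}\varphi_{j}|\sim|\xi|\sim l_{1}$ on the relevant frequency support, so the amplitude grows by a factor $\sim l_{1}^{a}$ (uniformly it lies, after dividing by $l_{1}^{a}$, in a fixed symbol class) while the phase $\varphi_{j}$ is untouched. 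The stationary-phase analysis of \cite{burqMultilinearEigenfunctionEstimates2005}, which depends only on the phases, therefore applies verbatim and yields
\[
\left\Vert(\nabla^{a}P_{l_{1}}f)\ast(\nabla^{b}P_{l_{2}}g)\right\Vert_{2}\lesssim_{M}l_{1}^{a}l_{2}^{b}\min(l_{1},l_{2})^{1/4}\left\Vert P_{l_{1}}f\right\Vert_{2}\left\Vert P_{l_{2}}g\right\Vert_{2},
\]
which is the claim (the stated case being $g=f$). The Fourier-integral remainder is smoothing and hence harmless for any fixed $a,b$, and the handful of very low frequencies $l_{i}\sim\lambda_{1}(M)$ is disposed of separately, since only finitely many bands occur there.

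It is worth noting that the factors which actually appear in Lemma~\ref{lem:viscous_domination} never require $a$ or $b$ larger than $1$, and for them an even simpler argument is available. Since $d$, $\delta$, $\star$ and $(-\Delta_{H})^{-1}$ all commute with $\Delta_{H}$ and hence preserve unit spectral bands, expressions such as $\curl(-\Delta)^{-1}P_{l_{1}}\omega_{Z}=-\bigl(\star\,d\,(-\Delta)^{-1}P_{l_{1}}\omega_{Z}\bigr)^{\sharp}$ or $\nabla P_{l_{2}}\omega_{Z}=d\,P_{l_{2}}\omega_{Z}$ are \emph{exactly} unit spectral clusters of $\Delta_{H}$ on the appropriate bundle, of $L^{2}$ size equal to a fixed power of $l_{i}$ times $\left\Vert P_{l_{i}}\omega_{Z}\right\Vert_{2}$. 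For such factors the plain bilinear estimate of \cite{burqMultilinearEigenfunctionEstimates2005} (without derivatives) on the bundle of $1$-forms or of functions, composed with the fiberwise contraction, gives the bound at once, with no parametrix bookkeeping.

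I expect the main difficulty to be exactly this parametrix bookkeeping when $a$ or $b$ is large: one has to check that differentiating the oscillatory integrals keeps the amplitudes in the (rescaled) symbol classes used in \cite{burqMultilinearEigenfunctionEstimates2005}, and that the Fourier-integral remainder --- no longer smoothing once more than a bounded number of derivatives land on it --- remains under control for each fixed $a,b$. I would also flag a natural but flawed shortcut: first apply the derivatives and then re-expand the result into spectral bands of a tensor Laplacian. This fails because a single unit band of $\sqrt{-\Delta}$ contains $\sim l_{1}$ eigenvalues by Weyl's law on a surface, so a covariant derivative can smear the $L^{2}$ mass of a full band over $\sim l_{1}$ neighbouring bands, which would lose the entire bilinear gain. (A single eigenfunction does not suffer from this, which is why the issue is invisible on $S^{2}$, where each $\nabla^{a}Y_{\ell}^{m}$ stays an $O(1)$-band object; on a general surface one must keep the derivatives inside the parametrix as above.)
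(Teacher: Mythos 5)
Your proposal is correct and follows essentially the same route as the paper: the paper's proof consists of reducing $P_{l_1}$ to the smoothed cluster $\chi_{l_1}P_{l_1}$ (with $\chi\in\mathcal{S}(\mathbb{R})$, $\chi=1$ on $[0,1]$) and citing the derivative bilinear estimate of \cite[Equation 7.13]{haniGlobalWellposednessCubic2011} (with the $\min(l_1,l_2)^{1/4}$ factor traced back to \cite{burqMultilinearEigenfunctionEstimates2005}), which is exactly the half-wave-parametrix and stationary-phase argument you reconstruct. The substance of your write-up is thus a proof of the cited black box rather than a different path to the proposition; your warning against re-expanding $\nabla^a P_{l_1}f$ into spectral bands after differentiating is apt, but it is not a point where the paper's argument and yours diverge.
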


\begin{proof}
Let $\chi\in \mathcal{S}(\mathbb{R})$ such that $\chi=1$ on $[0,1]$. Define $\chi_\lambda=\chi(\sqrt{-\Delta}-\lambda)$. Then $\chi_{l_1}P_{l_1}f=P_{l_1}f$, and we can use \cite[Equation 7.13]{haniGlobalWellposednessCubic2011}.\footnote{The statement 
in \cite{haniGlobalWellposednessCubic2011} contains a slight typo \cite{haniPersonalCommunication}.
The correct factor $\min\left(l_{1},l_{2}\right)^{\frac{1}{4}}$ was originally given in
\cite{burqMultilinearEigenfunctionEstimates2005}.}
\end{proof}

We will make use of a variant of this result, adapted to the frequency cutoff operators
defined in Section \ref{sec:freq_cutoff}.

Since we want to avoid relying on facts about the distribution of eigenvalues, we will
use a Fourier decomposition technique, decomposing the multilinear symbols into linear 
pieces (see, e.g. \cite[Lemma 2.10]{christRegularityInversesSingular1988}
or \cite[Proposition 7.5]{haniGlobalWellposednessCubic2011}).  This strategy, which we 
will refer to in the rest of this paper as the ``Fourier trick,'' allows us to pass 
between frequency cutoffs $P_k$ and eigenspace projections $\pi_s$.

\begin{lem}[Bilinear estimate]
\label{lem:bilinear_1_new} For any $f,g\in L^{2}\left(M\right)$
and $l_{1},l_{2}\geq\lambda_{1}\left(M\right)$ and $a,b,c\in\mathbb{N}_{0}$,
we have
\[
\left\Vert \left(\nabla^{a}P_{l_{1}}f\right)*\left(\nabla^{b}\left(-\Delta\right)^{-c}P_{l_{2}}g\right)\right\Vert _{2}\lesssim_{M}\min\left(l_{1},l_{2}\right)^{\frac{1}{4}}l_{1}^{a}\left\Vert P_{l_{1}}f\right\Vert _{2}l_{2}^{b-2c}\left\Vert P_{l_{2}}g\right\Vert _{2}
\]
where $\left(\nabla^{a}P_{l_{1}}f\right)*\left(\nabla^{b}P_{l_{2}}g\right)$
is schematic for any contraction of the two tensors.
\end{lem}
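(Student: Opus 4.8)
The plan is to deduce this from Proposition \ref{proposition:bilinear_1_old} by handling the resolvent factor $(-\Delta)^{-c}$ and the fact that $g$ need not equal $f$. First I would reduce to the case $f = g$: given arbitrary $f, g \in L^2(M)$, one can write any bilinear expression $(\nabla^a P_{l_1} f) * (\nabla^b (-\Delta)^{-c} P_{l_2} g)$ in terms of the diagonal bilinear form applied to $h = P_{l_1} f + P_{l_2} g$ (or simply note that the proof of Proposition \ref{proposition:bilinear_1_old} via \cite[Equation 7.13]{haniGlobalWellposednessCubic2011} is genuinely bilinear and the single-function statement is only cosmetic — the eigenfunction estimate of \cite{burqMultilinearEigenfunctionEstimates2005} applies to products of eigenfunctions from two possibly different eigenspaces). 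So the content to extract is: for $s_1 \in \sigma(\sqrt{-\Delta}) \cap [l_1, l_1+1)$ and $s_2 \in \sigma(\sqrt{-\Delta}) \cap [l_2, l_2+1)$, and eigenfunctions $\varphi_1, \varphi_2$ in the corresponding eigenspaces, $\|\varphi_1 \varphi_2\|_2 \lesssim_M \min(s_1,s_2)^{1/4} \|\varphi_1\|_2 \|\varphi_2\|_2$.

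Next I would dispose of the derivative and resolvent factors on the $P_{l_2} g$ side. Since $P_{l_2} g$ is spectrally localized to $[l_2, l_2+1)$ for $\sqrt{-\Delta}$, the operators $\nabla^b$ and $(-\Delta)^{-c}$ act almost diagonally: on each eigenspace $\pi_s$ with $s \in [l_2, l_2+1)$ one has $(-\Delta)^{-c} \pi_s = s^{-2c} \pi_s$, so $\|(-\Delta)^{-c} P_{l_2} g\|_2 \sim l_2^{-2c} \|P_{l_2} g\|_2$ (with constants depending only on the fixed integers, since $l_2 \geq \lambda_1 > 0$ keeps $s$ away from zero). For $\nabla^b$, one uses the standard elliptic/Bernstein-type bound $\|\nabla^b \phi\|_2 \lesssim_{M,b} l^b \|\phi\|_2$ valid for $\phi$ frequency-localized to $[l, l+1)$ — this follows from writing $\nabla^b$ schematically in terms of powers of $\Delta_H$ plus lower-order curvature terms (the $D^k$ schematic calculus from Section \ref{sec:notation}), or directly from the functional calculus. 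Similarly $\nabla^a$ on the $P_{l_1} f$ side contributes $l_1^a$. Assembling: the left-hand side is $\lesssim_M l_1^a \cdot l_2^{b - 2c} \cdot \min(l_1, l_2)^{1/4} \|P_{l_1} f\|_2 \|P_{l_2} g\|_2$ after invoking Proposition \ref{proposition:bilinear_1_old} (or its bilinear form) for the product of the two spectrally-localized pieces.

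The one genuine subtlety — and the step I expect to require the most care — is that $P_{l_i}$ is a \emph{frequency cutoff} $1_{[l_i, l_i+1)}(\sqrt{-\Delta})$, which is a sum $\sum_{s \in [l_i, l_i+1)} \pi_s$ over possibly many eigenspaces, whereas Proposition \ref{proposition:bilinear_1_old} is (in its source \cite{burqMultilinearEigenfunctionEstimates2005}) naturally a statement about individual eigenfunction products. This is exactly where the ``Fourier trick'' alluded to before the lemma statement enters: one decomposes the bilinear symbol $1_{[l_1,l_1+1)}(s_1) 1_{[l_2,l_2+1)}(s_2)$ (or rather a smooth majorant of it, e.g. using the Schwartz function $\chi_\lambda$ from the proof of Proposition \ref{proposition:bilinear_1_old}) via a Fourier expansion into a superposition $\int \widehat{m}(\xi,\eta) e^{i\xi \sqrt{-\Delta}} \otimes e^{i\eta\sqrt{-\Delta}}\, d\xi\, d\eta$ of rank-one tensor products of the \emph{bounded} unitary-type operators $e^{i\xi\sqrt{-\Delta}}$, with $\widehat{m} \in L^1$ uniformly in $l_1, l_2$. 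Applying the genuinely-bilinear eigenfunction estimate (which is stable under such superpositions because each $e^{i\xi\sqrt{-\Delta}}$ preserves eigenspaces and is an $L^2$ isometry on each) and integrating in $(\xi,\eta)$ against the $L^1$ kernel recovers the cutoff version with the same $\min(l_1,l_2)^{1/4}$ loss. I would cite \cite[Lemma 2.10]{christRegularityInversesSingular1988} or \cite[Proposition 7.5]{haniGlobalWellposednessCubic2011} for the precise form of this decomposition and carry out only the bookkeeping needed to track the uniformity of the $L^1$ norm of the kernel.
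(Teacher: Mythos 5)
Your argument is correct, and its main mechanism differs from the paper's. Your key move is to absorb the resolvent into the function: since $(-\Delta)^{-c}$ commutes with $P_{l_2}$, you have $\nabla^{b}(-\Delta)^{-c}P_{l_2}g=\nabla^{b}P_{l_2}\tilde g$ with $\tilde g:=(-\Delta)^{-c}(1-\pi_{0})g$ and $\Vert P_{l_2}\tilde g\Vert_{2}\lesssim l_{2}^{-2c}\Vert P_{l_2}g\Vert_{2}$, after which Proposition \ref{proposition:bilinear_1_old} (read bilinearly in $f$ and $g$, which is legitimate: the statement's repetition of $f$ is cosmetic, the underlying estimate of Hani/Burq--G\'erard--Tzvetkov being genuinely bilinear, and the paper itself applies the proposition to two distinct functions $f_{\theta_1},g_{\theta_2}$) finishes the proof in one line. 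The paper instead runs the ``Fourier trick'': it writes the multiplier $\psi(z_1,z_2)=(l_2+z_2)^{-2c}$ on the window of eigenvalue offsets, extends it to a $C^{2}$ periodic function, Fourier-expands with $\ell^{1}$-summable coefficients of size $l_2^{-2c}$, and resums the modulated functions $f_{\theta_1},g_{\theta_2}$, whose $P_{l_j}$-localized $L^{2}$ norms are unchanged. For this particular lemma your route is shorter and loses nothing; the paper's heavier machinery is deployed here as a warm-up because it is what survives in the later arguments (the trilinear estimate and the ``linear terms'' claim), where the multiplier genuinely couples the two spectral parameters (e.g. $((k+z_2)^2-(l+z_1)^2)^{-m}$) and cannot be absorbed into either function separately. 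One caution about your final paragraph: the eigenfunction-versus-cutoff passage you flag as the main subtlety is already resolved at the level of Proposition \ref{proposition:bilinear_1_old} (via $\chi_{l_1}P_{l_1}f=P_{l_1}f$ and the smoothed spectral-cluster estimate), so no further decomposition is needed when you invoke it; and the specific variant you sketch---Fourier-decomposing the sharp symbol $1_{[l_1,l_1+1)}(s_1)1_{[l_2,l_2+1)}(s_2)$---would not give a uniformly $L^{1}$ kernel without the smooth-majorant fix you mention parenthetically, which is why the paper expands only the smooth factor $\psi$ and keeps the sharp cutoffs intact through the modulations.
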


The main intuition underlying the connection between Proposition \ref{proposition:bilinear_1_old} 
and Lemma \ref{lem:bilinear_1_new} is that $\left(-\Delta\right)^{-c}P_{l_{2}}$
is almost like $l_{2}^{-2c}P_{l_{2}}$ (but not quite, as frequency cutoffs are a bit 
different from eigenspace projections). 

\begin{proof}[Proof of Lemma \ref{lem:bilinear_1_new}]
Let $h\in L^{2}\left(M\right)$ such that $\left\Vert h\right\Vert _{2}\leq1.$
We want to show
\begin{align}
\nonumber &\left\langle \left\langle \left(\nabla^{a}P_{l_{1}}f\right)*\left(\nabla^{b}\left(-\Delta\right)^{-c}P_{l_{2}}g\right),h\right\rangle \right\rangle \\
&\hspace{0.2in}=O\left(\min\left(l_{1},l_{2}\right)^{\frac{1}{4}}l_{1}^{a}\left\Vert P_{l_{1}}f\right\Vert _{2}l_{2}^{b-2c}\left\Vert P_{l_{2}}g\right\Vert _{2}\right).\label{eq:bilinear_eqn}
\end{align}

Observe that, using eigenspace projections, the left-hand side of (\ref{eq:bilinear_eqn}) can be rewritten as
\[
\sum\limits _{\substack{z_{j}\in[0,1)\cap\left(\sigma\left(\sqrt{-\Delta}\right)-l_{j}\right)\\
j=1,2
}
}\frac{1}{\left(l_{2}+z_{2}\right)^{2c}}\left\langle \left\langle \left(\nabla^{a}\pi_{l_{1}+z_{1}}f\right)*\left(\nabla^{b}\pi_{l_{2}+z_{2}}g\right),h\right\rangle \right\rangle 
\]
Set $\psi\left(z_{1},z_{2}\right)=\frac{1}{\left(l_{2}+z_{2}\right)^{2c}}$
for $z_{1},z_{2}\in[0,1]$. Simple calculations show that $$\left\Vert \psi\right\Vert _{C^{2}\left([0,1]^{2}\right)}\lesssim_{\lambda_{1}}\frac{1}{l_{2}^{2c}}.$$

We can easily extend $\psi$ to a $C^{2}$ function $\psi_{1}$
supported in $\left(-2,2\right)^{2}$ such that $\left\Vert \psi_{1}\right\Vert _{C^{2}}\lesssim\left\Vert \psi\right\Vert _{C^{2}}$.
Then we can treat $\psi_{1}$ as a $C^{2}$ periodic function on $[-2,2]^{2}$
and apply Fourier inversion: $\psi_{1}\left(z_{1},z_{2}\right)=\sum_{\theta_{1},\theta_{2}\in\frac{\mathbb{Z}}{4}}\widehat{\psi_{1}}\left(\theta_{1},\theta_{2}\right)e^{i2\pi z\cdot\theta}$
with 
\[
\left\Vert \widehat{\psi_{1}}\right\Vert _{l^{1}}\lesssim\left\Vert \left\langle \theta\right\rangle ^{2}\widehat{\psi_{1}}(\theta)\right\Vert _{l_{\theta}^{2}\left(\frac{\mathbb{Z}^{2}}{4}\right)}\left\Vert \frac{1}{\left\langle \theta\right\rangle ^{2}}\right\Vert _{l_{\theta}^{2}\left(\frac{\mathbb{Z}^{2}}{4}\right)}\lesssim\left\Vert \psi_{1}\right\Vert _{H^{2}}\lesssim\left\Vert \psi\right\Vert _{C^{2}}\lesssim\frac{1}{l_{2}^{2c}}.
\]

We can then rewrite the left-hand side of (\ref{eq:bilinear_eqn}) as 
\begin{align*}
&\sum\limits _{z_1,z_2}
\sum_{\theta_{1},\theta_{2}\in\frac{\mathbb{Z}}{4}}\widehat{\psi_{1}}\left(\theta_{1},\theta_{2}\right)\left\langle \left\langle \left(\nabla^{a}\pi_{l_{1}+z_{1}}fe^{i2\pi z_{1}\theta_{1}}\right)*\left(\nabla^{b}\pi_{l_{2}+z_{2}}ge^{i2\pi z_{2}\theta_{2}}\right),h\right\rangle \right\rangle \\
&\hspace{1.7in}=\sum_{\theta_{1},\theta_{2}\in\frac{\mathbb{Z}}{4}}\widehat{\psi_{1}}\left(\theta_{1},\theta_{2}\right)\left\langle \left\langle \left(\nabla^{a}P_{l_{1}}f_{\theta_{1}}\right)*\left(\nabla^{b}P_{l_{2}}g_{\theta_{2}}\right),h\right\rangle \right\rangle,
\end{align*}
where the outer sum on the left hand side is over the index set $$\{(z_1,z_2):z_{j}\in[0,1)\cap\left(\sigma\left(\sqrt{-\Delta}\right)-l_{j}\right)\,\,\textrm{for}\,\, j=1,2\},$$ and 
where we've set $$f_{\theta_{1}}:=\sum\limits _{\substack{z_{1}\in[0,1)\cap\left(\sigma\left(\sqrt{-\Delta}\right)-l_{1}\right)}
}\pi_{l_{1}+z_{1}}fe^{i2\pi z_{1}\theta_{1}}$$ and $$g_{\theta_{2}}:=\sum\limits _{\substack{z_{2}\in[0,1)\cap\left(\sigma\left(\sqrt{-\Delta}\right)-l_{2}\right)}
}\pi_{l_{2}+z_{2}}ge^{i2\pi z_{2}\theta_{2}}.$$ 

The crucial point is that the $L^{2}$ norm is modulation-independent,
and the eigenspaces are mutually orthogonal, so $\left\Vert P_{l_{1}}f_{\theta_{1}}\right\Vert _{2}=\left\Vert P_{l_{1}}f\right\Vert _{2}$,
$\left\Vert P_{l_{2}}g_{\theta_{2}}\right\Vert _{2}=\left\Vert P_{l_{2}}g\right\Vert _{2}$.

Applying Proposition \Blackboxref{bilinear_1_old} and noting that 
\begin{align*}
&\sum_{\theta_{1},\theta_{2}\in\frac{\mathbb{Z}}{4}}\widehat{\psi_{1}}\left(\theta_{1},\theta_{2}\right)O_{\neg \theta_1,\neg\theta_2}\left(\min\left(l_{1},l_{2}\right)^{\frac{1}{4}}l_{1}^{a}\left\Vert P_{l_{1}}f\right\Vert _{2}l_{2}^{b}\left\Vert P_{l_{2}}g\right\Vert _{2}\right)\\
&\hspace{0.2in}=O\left(\left\Vert \widehat{\psi_{1}}\right\Vert _{l^{1}}\min\left(l_{1},l_{2}\right)^{\frac{1}{4}}l_{1}^{a}\left\Vert P_{l_{1}}f\right\Vert _{2}l_{2}^{b}\left\Vert P_{l_{2}}g\right\Vert _{2}\right),
\end{align*}
the desired conclusion follows.
\end{proof}

While the estimate established in Lemma \ref{lem:bilinear_1_new} is good enough for the so-called ``triangle
regions'' in our analysis (see Claim \ref{clm:clm1} in Section \ref{sec:viscous}), we will also need another estimate
to treat the ``distant regions'' of frequency interactions.  In \cite{haniGlobalWellposednessCubic2011}, Hani
showed that for any $f,g,h\in L^{2}\left(M\right)$ and $l_{1}\geq l_{2}\geq l_{3}\geq\lambda_{1}(M)$
such that $l_{1}=l_{2}+Kl_{3}+2$ for $K>1$ , one has
\[
\left|\int_{M}\left(P_{l_{1}}f\right)\left(P_{l_{2}}g\right)\left(P_{l_{3}}h\right)\right|\lesssim_{J,M}\frac{l_{3}^{\frac{1}{4}}}{K^{J}}\left\Vert P_{l_{1}}f\right\Vert _{2}\left\Vert P_{l_{2}}g\right\Vert _{2}\left\Vert P_{l_{3}}h\right\Vert _{2}
\]
for all $J\in\mathbb{N}_{0}$.

We generalize this result to the following lemma.
\begin{lem}[Trilinear estimate]
\label{lem:trilinear_new}For any $f_{1},f_{2},f_{3}\in L^{2}\left(M\right)$;
$a_{1},b_{1},a_{2},b_{2},a_{3},b_{3},J\in\mathbb{N}_{0}$ and $l_{1}\geq l_{2}\geq l_{3}\geq\lambda_{1}(M)$
such that $l_{1}=l_{2}+Kl_{3}+2$ for $K>1$, we have 
\begin{align*}
&\left|\int_{M}\left(\nabla^{a_{1}}\left(-\Delta\right)^{-b_{1}}P_{l_{1}}f_{1}\right)*\left(\nabla^{a_{2}}\left(-\Delta\right)^{-b_{2}}P_{l_{2}}f_{2}\right)*\left(\nabla^{a_{3}}\left(-\Delta\right)^{-b_{3}}P_{l_{3}}f_{3}\right)\right|\\
&\hspace{0.2in}\lesssim_{J,M,\neg l_{1},\neg l_{2},\neg l_{3}}\frac{l_{3}^{\frac{1}{4}}}{K^{J}}\prod_{j=1}^{3}l_{j}^{a_{j}-2b_{j}}\left\Vert P_{l_{j}}f_{j}\right\Vert _{2}
\end{align*}
\end{lem}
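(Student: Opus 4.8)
The plan is to re-run the proof of Hani's scalar trilinear estimate recalled just above (from \cite{haniGlobalWellposednessCubic2011}, itself building on \cite{burqMultilinearEigenfunctionEstimates2005}) with the extra factors $(-\Delta)^{-b_j}$ and derivatives $\nabla^{a_j}$ present. The guiding principle is that the $(-\Delta)^{-b_j}$ can be peeled off by the Fourier trick, contributing $\prod_j l_j^{-2b_j}$, while the $\nabla^{a_j}$ only inflate the \emph{amplitude} of the underlying oscillatory integral by symbol factors of size $\lesssim l_j^{a_j}$; neither touches the \emph{phase}, whose non-stationarity --- forced by the separation $l_1=l_2+Kl_3+2$ --- is exactly what produces the decay $K^{-J}$.

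First I would remove the $(-\Delta)^{-b_j}$ precisely as in the proof of Lemma \ref{lem:bilinear_1_new}: decomposing each $(-\Delta)^{-b_j}P_{l_j}$ into eigenspace projections $\pi_{l_j+z_j}$ with $z_j\in[0,1)$, the pertinent symbol is $\psi(z_1,z_2,z_3)=\prod_{j=1}^{3}(l_j+z_j)^{-2b_j}$, which is $C^2$ on $[0,1]^3$ with $\|\psi\|_{C^2}\lesssim_{\lambda_1}\prod_j l_j^{-2b_j}$; extending $\psi$ to a compactly supported function and Fourier expanding gives an $\ell^1$ bound $\lesssim\prod_j l_j^{-2b_j}$ for the coefficients (three variables are still covered by $C^2$ control), and the modulation-invariance of $L^2$ norms on eigenspaces reduces matters to the case $b_1=b_2=b_3=0$, with each $f_j$ replaced by a modulation $u_j$ having $\|P_{l_j}u_j\|_2=\|P_{l_j}f_j\|_2$. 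Thus it suffices to prove
\[
\Big|\int_M (\nabla^{a_1}P_{l_1}u_1)*(\nabla^{a_2}P_{l_2}u_2)*(\nabla^{a_3}P_{l_3}u_3)\Big|\lesssim_{J,M,\neg l_1,\neg l_2,\neg l_3}\frac{l_3^{1/4}}{K^J}\prod_{j=1}^{3}l_j^{a_j}\|P_{l_j}u_j\|_2 .
\]

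For this reduced estimate I would follow Hani's argument. Fixing $\chi\in C_c^\infty(\mathbb{R})$ with $\chi\equiv1$ on $[0,1]$ and $\supp\chi\subset(-1,2)$, one has $P_{l_j}=\chi(\sqrt{-\Delta}-l_j)P_{l_j}$, hence $\nabla^{a_j}P_{l_j}u_j=\big(\nabla^{a_j}\chi(\sqrt{-\Delta}-l_j)\big)P_{l_j}u_j$, where $\nabla^{a_j}\chi(\sqrt{-\Delta}-l_j)$ is a pseudodifferential operator whose symbol is supported in the shell $|\xi|\in(l_j-1,l_j+2)$ and bounded there by $\lesssim l_j^{a_j}$ (the lower-order terms from commuting $\nabla$ past functions of $\Delta$ carry strictly fewer powers of $l_j$, and are absorbed at the end). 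Representing $\chi(\sqrt{-\Delta}-l_j)$ by the standard half-wave parametrix turns the triple integral into an oscillatory integral whose phase encodes a momentum balance among the three shells; since $l_1=l_2+Kl_3+2$ with $K>1$ puts us strictly outside the triangle region $|l_1-l_2|\le l_3$, with quantitative defect $\gtrsim Kl_3$ measured against the natural scale $l_3$, repeated non-stationary-phase integration by parts produces the prefactor $K^{-J}$ for every $J\in\mathbb{N}_0$, the amplitude being inflated only by $\prod_j l_j^{a_j}$. Combining this gain with the bilinear estimate of Lemma \ref{lem:bilinear_1_new} (with $c=0$) applied to the two lowest shells $l_2\geq l_3$ --- which supplies the factor $\min(l_2,l_3)^{1/4}=l_3^{1/4}$ and the powers $l_2^{a_2}l_3^{a_3}$ --- together with Cauchy--Schwarz on the fiber contraction and $\|\nabla^{a_1}P_{l_1}u_1\|_2\lesssim_M l_1^{a_1}\|P_{l_1}u_1\|_2$, exactly as in Hani's proof, yields the reduced estimate; summing the finitely many Fourier modes from the first step then completes the argument.

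The main obstacle is this second step: one must verify carefully that inserting $\nabla^{a_j}$ does not degrade the tight frequency localization that Hani's estimate relies on, and track the orders of every symbol through the parametrix composition and the repeated integrations by parts so that precisely the powers $l_j^{a_j-2b_j}$ --- and no more --- emerge, with all implied constants uniform in $l_1,l_2,l_3$. The remaining matters --- replacing Schwartz by compactly supported spectral cutoffs so the operators are genuinely frequency-localized, discarding the long-time tail of the wave parametrix, and bookkeeping the curvature corrections --- are routine.
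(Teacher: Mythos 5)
Your first reduction --- peeling off the $(-\Delta)^{-b_j}$ by decomposing into eigenspace projections, putting the symbol $\prod_j(l_j+z_j)^{-2b_j}$ into $C^2([0,1]^3)$, Fourier-expanding it, and using modulation-invariance of the $L^2$ norms --- is correct and is exactly the paper's ``Fourier trick'' (the paper folds this symbol together with the decay factor into a single function $\Psi$, but that is only an organizational difference). The gap is in your second step, which is the entire content of the lemma: the mechanism that produces $K^{-J}$ while tracking the powers of $l_j$. You propose a half-wave-parametrix/non-stationary-phase argument and assert that repeated integration by parts yields $K^{-J}$ with ``the amplitude being inflated only by $\prod_j l_j^{a_j}$,'' but you do not carry this out, and you yourself flag it as ``the main obstacle.'' Moreover the quantitative claim is doubtful as stated: in such an argument each integration by parts gains $|\nabla_x\Phi|^{-1}\lesssim (Kl_3)^{-1}$ but loses factors from differentiating the amplitude and from the Hessian of the phase, which is of size $l_1\sim l_2+Kl_3$; when $l_2\gg Kl_3^{2}$ the resulting per-step factor $l_2/(Kl_3)^2$ is not $O(K^{-1})$, so the naive count does not close without a more careful choice of the direction of integration by parts. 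In short, the step that distinguishes this lemma from the already-quoted scalar estimate of Hani is deferred rather than proved.

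The paper's route (Appendix B, Lemma \ref{lem:a.1}) avoids microlocal analysis entirely and is worth internalizing: for exact eigenfunctions one writes
$n_1^2\int_M \nabla^{a_1}e_1*\nabla^{a_2}e_2*\nabla^{a_3}e_3=\int_M \nabla^{a_1}(-\Delta)e_1*\cdots$, moves $-\Delta$ onto the product of the remaining factors by self-adjointness, and uses the Leibniz rule plus the eigenvalue equations; solving for the original integral produces the factor $\mathcal{N}=(n_1^2-n_2^2-n_3^2)^{-1}\lesssim (Kl_2l_3)^{-1}$ at the cost of \emph{exactly} one additional derivative on each of $e_2$ and $e_3$ (worth $l_2l_3$, so the net gain per step is precisely $K^{-1}$), together with curvature commutator terms carrying strictly fewer derivatives, which are handled by a separate branch of the induction. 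Iterating $J$ times, then applying the Fourier trick and the bilinear estimate of Proposition \ref{proposition:bilinear_1_old} to the resulting expression with derivative counts $a_1$, $a_2+J$, $a_3+J$, gives the stated bound; note in particular that the ``amplitude'' \emph{is} inflated by the extra $l_2^{J}l_3^{J}$, which is cancelled by $(Kl_2l_3)^{-J}$ --- this is exactly the cancellation your sketch asserts away. To complete your proof you should either supply the full parametrix bookkeeping (including the sharp-versus-smooth cutoff issue for $P_l=1_{[l,l+1)}(\sqrt{-\Delta})$ and the curvature corrections you call routine) or replace that step by the eigenvalue-equation integration by parts.
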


The proof of Lemma \ref{lem:trilinear_new} is given in Appendix \ref{appendix:lemma}.  The ideas
involved are similar to the tools used in the proof of Lemma \ref{lem:bilinear_1_new} above.

We now proceed to the proof of Lemma \ref{lem:viscous_domination}.  To make the argument easier to follow, we note
that it suffices to establish the following self-contained statement.  This formulation makes it clear that there 
is no dependence on $K_{0},T,Z$ in \Lemref{viscous_domination}. 

\begin{nlem}{$\textrm{7}^{\,\prime}$}[Viscous domination, restated]
\label{lem:viscous_domination_restated}
Let $w\in C^{\infty}(M)$ and $u\in\mathcal{P}_{\mathcal{H}}\mathfrak{X}\left(M\right)$.  Let $A,B\geq1$ and $k\in\mathbb{N}_{0}+\lambda_{1}+10$. Let $r>1$.  Assume that $\pi_{0}w=0$ and $\left\Vert P_{l}w\right\Vert _{2}\leq\frac{A}{\left|l\right|^{r}}$ for all $l\in\mathbb{N}_{0}+\lambda_{1}$.  Assume also that $\left\Vert w\right\Vert _{2}+\left\Vert u\right\Vert _{2}=\left\Vert \left\Vert P_{j}w\right\Vert _{2}\right\Vert _{l_{j}^{2}\left(\mathbb{N}_{0}+\lambda_{1}\right)}+\left\Vert u\right\Vert _{2}\leq B$.

Then 
\begin{align*}
&\sum\limits _{l_{1},l_{2}\in\mathbb{N}_{0}+\lambda_{1}}\left\Vert P_{k}\left\langle \curl\left(-\Delta\right)^{-1}P_{l_{1}}w,\nabla P_{l_{2}}w\right\rangle \right\Vert _{2}+\sum\limits _{l\in\mathbb{N}_{0}+\lambda_{1}}\left\Vert P_{k}\left\langle \mathcal{P}_{\mathcal{H}}u,\nabla P_{l}w\right\rangle \right\Vert _{2}\\
&\hspace{0.8in}+\sum\limits _{l\in\mathbb{N}_{0}+\lambda_{1}}\left\Vert P_{k}D^{2}\curl\left(-\Delta\right)^{-1}P_{l}w\right\Vert _{2}+\left\Vert P_{k}D^{1}\mathcal{P}_{\mathcal{H}}u\right\Vert _{2} \lesssim_{M,r}\frac{AB}{\left|k\right|^{r-\frac{7}{4}}}
\end{align*}
\end{nlem}

We will split this problem into smaller claims, handling the contribution of each term.  This is the content of the next three
subsections, the combination of which together establish Lemma \ref{lem:viscous_domination_restated}.

\subsection{The convective term}

The main tools we will use are \Lemref{bilinear_1_new} and \Lemref{trilinear_new}.
We also note that for $p\in\left[1,\infty\right]$ and $\alpha\in\mathbb{R}$:
\begin{itemize}
\item $\left\Vert l^{\alpha}\right\Vert _{l_{1\lesssim l\lesssim k}^{p}}\lesssim k^{\alpha+\frac{1}{p}}$
for $\alpha p>-1$.
\item $\left\Vert \frac{1}{l^{\alpha}}\right\Vert _{l_{l\gtrsim k}^{p}}\sim\frac{1}{k^{\alpha-\frac{1}{p}}}$
for $\alpha p>1$.
\end{itemize}
\begin{claim}
\label{clm:clm1}
Firstly, we will show 
\[
\sum\limits _{\substack{l_{1},l_{2}\in\mathbb{N}_{0}+\lambda_{1}\\
\left|l_{1}-l_{2}\right|\leq k\leq l_{1}+l_{2}
}
}\left\Vert P_{k}\left\langle \curl\left(-\Delta\right)^{-1}P_{l_{1}}w,\nabla P_{l_{2}}w\right\rangle \right\Vert _{2}\lesssim\frac{AB}{k^{r-\frac{7}{4}}}
\]
\end{claim}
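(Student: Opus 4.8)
\textbf{Proof proposal for Claim \ref{clm:clm1}.}

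The plan is to estimate the convective contribution on the ``triangle region'' $|l_1-l_2|\leq k\leq l_1+l_2$ by applying the bilinear estimate of Lemma \ref{lem:bilinear_1_new} termwise, then summing over the admissible pairs $(l_1,l_2)$ using the two elementary $\ell^p$ summation facts recorded just before the claim. Concretely, since $P_k$ is bounded on $L^2$ uniformly in $k$, I would first drop it and bound
$\lVert P_k\langle \curl(-\Delta)^{-1}P_{l_1}w,\nabla P_{l_2}w\rangle\rVert_2 \lesssim \lVert (\nabla^1(-\Delta)^{-1}P_{l_1}w)*(\nabla^1 P_{l_2}w)\rVert_2$, where $\curl(-\Delta)^{-1}=-(\star d(-\Delta)^{-1})^\sharp$ is schematically $\nabla^1(-\Delta)^{-1}$. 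Lemma \ref{lem:bilinear_1_new} with $a=1$, $b=1$, $c=1$ on the second factor (or with the roles reversed depending on which of $l_1,l_2$ is smaller) then gives the pointwise-in-$(l_1,l_2)$ bound
\[
\lesssim_M \min(l_1,l_2)^{\frac14}\, l_1^{1-2}\lVert P_{l_1}w\rVert_2\, l_2^{1}\lVert P_{l_2}w\rVert_2
= \min(l_1,l_2)^{\frac14}\, \frac{l_2}{l_1}\,\lVert P_{l_1}w\rVert_2\lVert P_{l_2}w\rVert_2,
\]
and symmetrically $\lesssim \min(l_1,l_2)^{\frac14}\,\frac{l_1}{l_2}\lVert P_{l_1}w\rVert_2\lVert P_{l_2}w\rVert_2$; so in all cases it is $\lesssim \min(l_1,l_2)^{\frac14}\lVert P_{l_1}w\rVert_2\lVert P_{l_2}w\rVert_2$ after using $l_2/l_1\le 1$ when $l_2\le l_1$ (and vice versa).

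Next I would split the sum by which frequency is largest and exploit the constraint $k\le l_1+l_2$, so $\max(l_1,l_2)\gtrsim k$. Say $l_1\geq l_2$ (the other case is symmetric), so $l_1\gtrsim k$ and $\min(l_1,l_2)=l_2$. Using the hypothesis $\lVert P_{l_2}w\rVert_2\le A/l_2^r$ on the small frequency and the enstrophy bound $\lVert \lVert P_{l_1}w\rVert_2\rVert_{\ell^2_{l_1}}\le B$ on the large one, I would sum first in $l_2$: for fixed $l_1$, $\sum_{l_2\le l_1} l_2^{1/4}\cdot A\,l_2^{-r}\lesssim A$ since $r>1$ implies $1/4-r<-1$, so this is an absolutely convergent series (here is where $r>1$ enters, giving room $1/4 - r < -3/4 < -1$). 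This leaves $\sum_{l_1\gtrsim k} \lVert P_{l_1}w\rVert_2 \cdot A$, but I cannot afford an $\ell^1$ sum in $l_1$ against $B$ directly; instead I keep one more factor of decay. The cleaner route: use $\lVert P_{l_2}w\rVert_2\le A/l_2^r$ to get $\sum_{l_2\le l_1} l_2^{1/4-r}\lesssim_r 1$ as a \emph{number} (not $A$-free — it is $\lesssim_r 1$), wait — more carefully, I should retain the $A$ and write $\sum_{l_1\ge k,\, l_1\ge l_2} \lVert P_{l_1}w\rVert_2 \cdot A \cdot(\text{const})$, then apply Cauchy–Schwarz in $l_1$ over the range $l_1\gtrsim k$: $\sum_{l_1\gtrsim k}\lVert P_{l_1}w\rVert_2 \le \big(\sum_{l_1\gtrsim k}\lVert P_{l_1}w\rVert_2^2\big)^{1/2}\big(\#\{l_1\in[k,\infty)\cap(\mathbb N_0+\lambda_1)\}\big)^{1/2}$ — that diverges. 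So instead I must spend part of the $A/l_1^r$ decay \emph{also} on $l_1$: revisit and use $\lVert P_{l_1}w\rVert_2\le A/l_1^r$ on the large frequency too for part of the exponent, keeping $\lVert P_{l_2}w\rVert_2\le B$-type control (via $\ell^2$) on $l_2$... The accounting that actually closes: bound one factor by the pointwise $A/l^r$ and the other by pulling out $B$ via $\ell^2$, choosing which is which so that the remaining power series converges and produces $k^{-(r-7/4)}$. I expect: put $A/l_1^r$ on the large frequency $l_1\gtrsim k$, giving (after the $l_2$-sum which costs a convergent constant and a factor handling the $\min^{1/4}$) something like $\sum_{l_1\gtrsim k} l_1^{1/4}\cdot \frac{A}{l_1^r}\cdot(\text{stuff}(l_2))$, and bound the $l_2$-stuff by $B$ in $\ell^2$; the $l_1$-sum $\sum_{l_1\gtrsim k} l_1^{1/4-r}\sim k^{1/4-r+1}=k^{-(r-5/4)}$, which is even better than required, leaving slack that absorbs the $l_2/l_1\le 1$ loss and the $\min^{1/4}$ on whichever side. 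The target exponent $r-7/4$ leaves a comfortable margin of $1/2$ over what this produces.

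The main obstacle is the bookkeeping of \emph{where to spend the $A/l^r$ decay versus the $B$ (enstrophy, $\ell^2$) bound} across the two summation variables so that: (i) the inner sum converges (needs $r>1$, specifically $1/4-r<-1$), (ii) the outer sum over $l_1\gtrsim k$ produces a power of $k^{-1}$ at least as strong as $r-7/4$, and (iii) the derivative count from $\curl(-\Delta)^{-1}\sim\nabla^1(-\Delta)^{-1}$ (net $-1$ derivative) together with the $\nabla$ on $P_{l_2}w$ (net $+1$) and the loss $\min(l_1,l_2)^{1/4}$ from Lemma \ref{lem:bilinear_1_new} are correctly tallied — the worst case being $l_1\sim l_2\sim k$, where the two derivative powers cancel and one is left with exactly $k^{1/4}\cdot (A/k^r)\cdot(B/\text{something})$-type terms near the diagonal, contributing $\lesssim AB\, k^{1/4-r}\cdot(\text{number of diagonal terms, which is }\lesssim k)$; that gives $k^{5/4-r}$, again within $k^{7/4-r}$. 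Everything else is routine once this ledger is fixed; I would organize it by the three subregions $l_1\gg l_2$, $l_2\gg l_1$, $l_1\sim l_2$ (all with $\max\gtrsim k$) and quote the two $\ell^p$ facts for the resulting one-variable geometric-type sums.
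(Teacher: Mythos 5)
Your overall strategy (apply Lemma \ref{lem:bilinear_1_new} termwise, then trade the pointwise decay $A/l^{r}$ against the $\ell^{2}$ bound $B$ over the two summation indices) is the right one and is what the paper does, but the proposal contains concrete errors and never actually closes the sum. First, the per-term bound is \emph{not} symmetric in $l_{1},l_{2}$: in $\left\langle \curl(-\Delta)^{-1}P_{l_{1}}w,\nabla P_{l_{2}}w\right\rangle$ the inverse Laplacian sits on the $l_{1}$ factor and the gradient on the $l_{2}$ factor, so the only bound the lemma gives is $\min(l_{1},l_{2})^{1/4}\,\tfrac{l_{2}}{l_{1}}\,\lVert P_{l_{1}}w\rVert_{2}\lVert P_{l_{2}}w\rVert_{2}$. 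There is no ``roles reversed'' version producing $l_{1}/l_{2}$, so your simplification to $\min(l_{1},l_{2})^{1/4}\lVert P_{l_{1}}w\rVert_{2}\lVert P_{l_{2}}w\rVert_{2}$ is false precisely when $l_{1}\le l_{2}$ (e.g.\ $l_{1}\le k/2$, $l_{2}\sim k$, where the true factor $l_{2}/l_{1}\sim k/l_{1}$ is large). Second, the arithmetic underpinning your convergence claims is wrong: $r>1$ gives $1/4-r<-3/4$, and $-3/4$ is \emph{not} less than $-1$, so $\sum_{l}l^{1/4-r}$ diverges for $r\in(1,5/4]$; the same issue sinks your outer sum $\sum_{l_{1}\gtrsim k}l_{1}^{1/4-r}$. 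Third, there is no ``comfortable margin of $1/2$'': the diagonal region $l_{1}\sim l_{2}\sim k$ contains $O(k^{2})$ pairs, not $O(k)$ terms, and when the ledger is done correctly every region lands \emph{exactly} on $AB\,k^{-(r-7/4)}$; the exponent $7/4$ is sharp for this method, not slack to be absorbed.

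The paper closes the argument by splitting the triangle region according to the size of $l_{1}$ relative to $k$: (i) $l_{1}\le k/2$, where $l_{2}\sim k$, one puts $A/k^{r}$ on $P_{l_{2}}w$, notes that for each $l_{1}$ there are only $O(l_{1})$ admissible $l_{2}$ (cancelling the $1/l_{1}$), and Cauchy--Schwarz in $l_{1}$ gives $\sum_{l_{1}\lesssim k}l_{1}^{1/4}\lVert P_{l_{1}}w\rVert_{2}\lesssim k^{3/4}B$; (ii) $k/2<l_{1}\le 2k$, where one puts $A/k^{r}$ on $P_{l_{1}}w$, Cauchy--Schwarz in $l_{2}$ gives $\sum_{l_{2}\lesssim k}l_{2}\lVert P_{l_{2}}w\rVert_{2}\lesssim k^{3/2}B$, and the $O(k)$ choices of $l_{1}$ supply the last factor; (iii) $l_{1}>2k$, where $l_{1}\sim l_{2}$, one substitutes $l_{2}=l_{1}+j$ with $|j|\le k$, uses $A/l_{1}^{r}$ pointwise and Cauchy--Schwarz in $l_{1}$ over $\{l_{1}\gtrsim k\}$ (which needs only $2(r-1/4)>1$, true for $r>1$), and sums the $O(k)$ values of $j$. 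Each region contributes exactly $AB\,k^{-(r-7/4)}$. The key idea your proposal is missing is this three-way case split together with counting the multiplicity of the constrained index in each case; without it, the divergent one-variable sums you encounter cannot be avoided.
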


\begin{rem*}
These ``triangle regions'' are all we need to complete the proof of 
Lemma \ref{lem:viscous_domination_restated} (and its original formulation Lemma \ref{lem:viscous_domination}), and thus also of 
Theorem \ref{thm:general-manifold}, in the case when $M$ is the 
sphere $S^2\subset\mathbb{R}^3$. Indeed, on the sphere, we have $\Ric(X)=X$ and $\mathcal{P}_{\mathcal{H}}=0$.
In this case, we are therefore justified in setting $\Delta_{M}=\Delta_{H}+c$ where $c\in\mathbb{R}$
is a constant, which is easy to handle as $c\left\Vert P_{k}\omega_{Z}\right\Vert _{2}\lesssim_{\lambda_{1}}\left\Vert \Delta P_{k}\omega_{Z}\right\Vert _{2}$. Also, we have $P_{k}\left\langle \curl\left(-\Delta\right)^{-1}P_{l_{1}}\omega_{Z}(t),\nabla P_{l_{2}}\omega_{Z}(t)\right\rangle =0$
if $\left(k,l_{1},l_{2}\right)$ does not obey the triangle inequalities; see
\cite[Equation (26)]{caoNavierStokesEquationsRotating1999}.
\end{rem*}

\begin{proof}[Proof of Claim \ref{clm:clm1}]
Let $\mathcal{T}=\left\{ \left(l_{1},l_{2}\right):l_{1},l_{2}\in\mathbb{N}_{0}+\lambda_{1}\text{ and }\left|l_{1}-l_{2}\right|\leq k\leq l_{1}+l_{2}\right\} $.
We write $\mathcal{T}=\mathcal{T}_{1}\cup\mathcal{T}_{2}\cup\mathcal{T}_{3}$, where the sets $\mathcal{T}_i$ are defined by
$$\mathcal{T}_{1}=\{\left(l_{1},l_{2}\right)\in\mathcal{T}:l_{1}\leq\frac{k}{2}\},$$
$$\mathcal{T}_{2}=\{\left(l_{1},l_{2}\right)\in\mathcal{T}:\frac{k}{2}<l_{1}\leq2k\},$$
and
$$\mathcal{T}_{3}=\{\left(l_{1},l_{2}\right)\in\mathcal{T}:l_{1}>2k\}.$$

\begin{figure}
	\centering
	\includegraphics[width=0.7\linewidth]{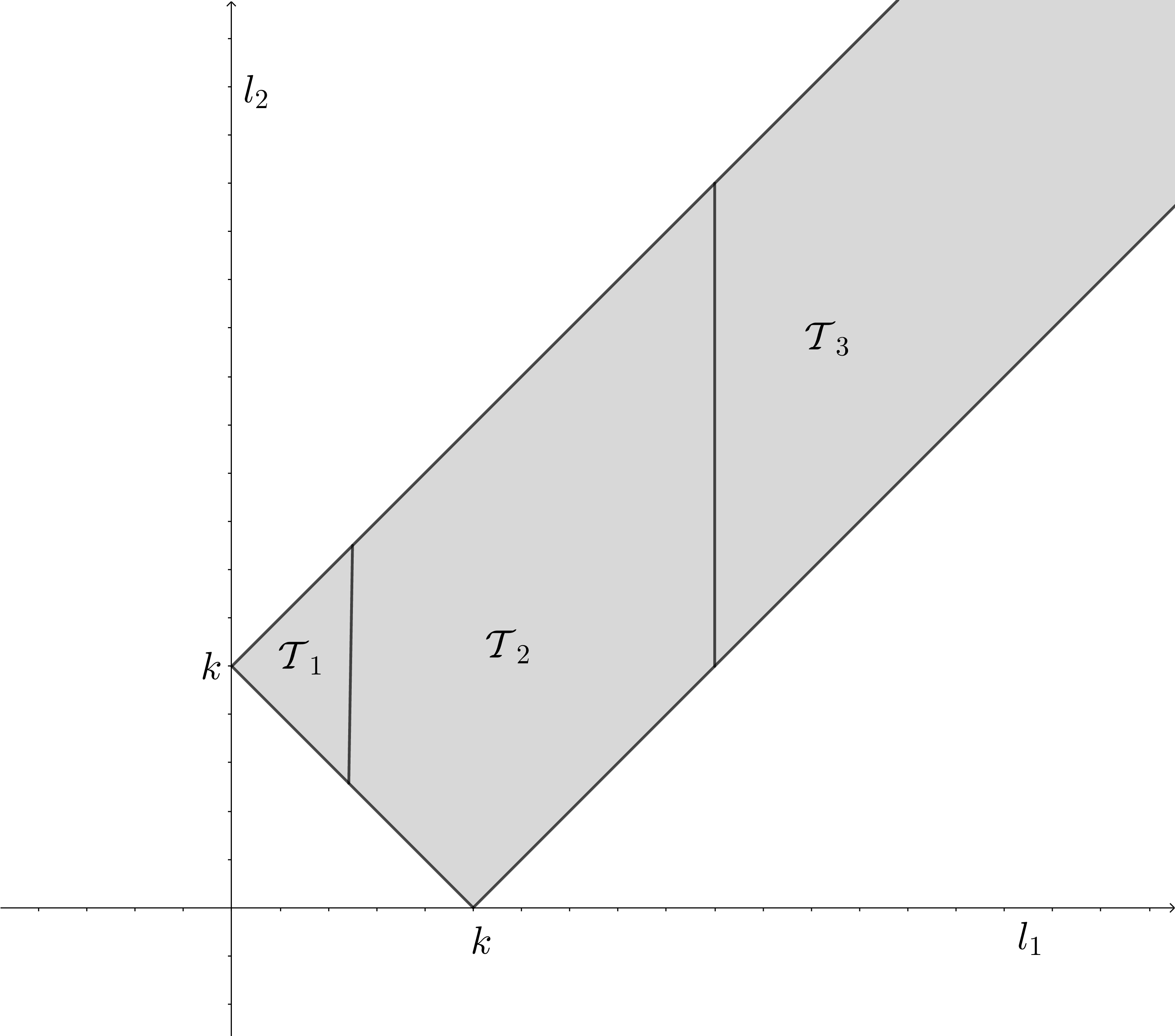}
	\caption{The triangle regions.}
	\label{fig:navierdiagram1}
\end{figure}

We begin by estimating the contribution of $\mathcal{T}_1$.  In this case, we have $l_{1}\leq l_{2}\sim k$, and the contribution of terms from $\mathcal{T}_1$ is bounded by
\begin{align}
\sum_{l_{1}}\sum_{l_{2}}l_{1}^{1/4}\frac{1}{l_{1}}\left\Vert P_{l_{1}}w\right\Vert _{2}k\left\Vert P_{l_{2}}w\right\Vert _{2}\lesssim\sum_{l_{1}}l_{1}^{1/4}\left\Vert P_{l_{1}}w\right\Vert _{2}k\frac{A}{k^{r}}\label{eq:t1}
\end{align}
where to obtain the last inequality we have noted that for each $l_{1}$, there are at most $2l_{1}$ choices of $l_{2}$.  The H\"older inequality now gives the bound
\begin{align*}
(\ref{eq:t1})\lesssim k^{3/4}B\cdot k\frac{A}{k^{r}}=\frac{AB}{k^{r-7/4}}
\end{align*}

We now estimate the contribution from $\mathcal{T}_2$.  Here we have $l_{2}\lesssim k\sim l_{1}$.  The contribution is then bounded by 
\begin{align}
\nonumber \sum_{l_{1}}\sum_{l_{2}}k^{1/4}\frac{1}{k}\left\Vert P_{l_{1}}w\right\Vert _{2}l_{2}\left\Vert P_{l_{2}}w\right\Vert _{2}&\lesssim
\sum_{l_{1}}k^{1/4}\frac{1}{k}\left\Vert P_{l_{1}}w\right\Vert _{2}k^{\frac{3}{2}}B\\
&\leq\sum_{l_{1}}\frac{AB}{k^{r-3/4}}\label{eq:t2}
\end{align}
where the H\"older inequality is used in passing from left to right in the first line.  Recalling that there are at most $O(k)$ choices for
the value of $l_1$ in the summation for this contribution, we obtain the bound
\begin{align*}
(\ref{eq:t2})\lesssim\frac{AB}{k^{r-7/4}}.
\end{align*}

It remains to estimate the $\mathcal{T}_3$ contribution.  For this, we have $k\lesssim l_{1}\sim l_{2}$, and we note that, for each fixed $l_1$, the 
number of choices for $l_2$ is at most $O(k)$.  Making the change of variable $l_{2}=l_{1}+j,$ where
$\left|j\right|\leq k$, the contribution of $\mathcal{T}_3$ is bounded by
\begin{align*}
\sum_{j}\sum_{l_{1}}l_{1}^{1/4}\frac{1}{l_{1}}\left\Vert P_{l_{1}}w\right\Vert _{2}\left(l_{1}+j\right)\left\Vert P_{l_{1}+j}w\right\Vert _{2}&\lesssim\sum_{j}\sum_{l_{1}}\frac{A}{l_{1}^{r-1/4}}\left\Vert P_{l_{1}+j}w\right\Vert _{2}\\
&\lesssim \sum_{j}\frac{A}{k^{r-3/4}}B\\
&\lesssim \frac{AB}{k^{r-7/4}},
\end{align*}
where we used the H\"older inequality to pass from the first to second lines.  Note that in this calculation we needed $2\left(r-\frac{1}{4}\right)>1$.
\end{proof}

As we oberved above, this completes the proof in the case of the sphere $M=S^2$.  To treat more 
general manifolds, we will invoke the trilinear estimate in \Lemref{trilinear_new} to estimate the 
contribution of the ``distant regions'' (where $\max(k,l_{1},l_{2})$ is far bigger than the rest).  
In addition, between the triangle regions  and the distant regions, there are ``intermediate 
regions'' where we require more ad-hoc arguments.

\begin{claim}
With $k\in\mathbb{N}_{0}+\lambda_{1}+10$, set $$\mathcal{A}:=\left\{ \left(l_{1},l_{2}\right):l_{1},l_{2}\in\mathbb{N}_{0}+\lambda_{1}\text{ and }\left|l_{1}-l_{2}\right|>k\right\},$$
and $$\mathcal{B}=\left\{ \left(l_{1},l_{2}\right):l_{1},l_{2}\in\mathbb{N}_{0}+\lambda_{1}\text{ and }l_{1}+l_{2}<k\right\}.$$

Then
\[
\sum\limits _{\left(l_{1},l_{2}\right)\in\mathcal{A}\cup\mathcal{B}}\left\Vert P_{k}\left\langle \curl\left(-\Delta\right)^{-1}P_{l_{1}}w,\nabla P_{l_{2}}w\right\rangle \right\Vert _{2}\lesssim_{M}\frac{AB}{k^{r-\frac{7}{4}}}
\]
\end{claim}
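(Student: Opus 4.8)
The plan is to estimate the "distant" and "empty" frequency regions $\mathcal{A}$ and $\mathcal{B}$ using the trilinear estimate of \Lemref{trilinear_new}, exploiting the rapid decay in the gap parameter $K$. To set this up, I first dualize: for $h\in L^2(M)$ with $\|h\|_2\le 1$, I must bound $\sum_{(l_1,l_2)}\left|\left\langle\left\langle P_k\left\langle\curl(-\Delta)^{-1}P_{l_1}w,\nabla P_{l_2}w\right\rangle,h\right\rangle\right\rangle\right|$, which (moving $P_k$ onto $h$ and writing $P_k h=P_{l_k}h$ with $l_k=k$) becomes a sum of trilinear integrals $\int_M (\nabla^{a_3}(-\Delta)^{-1}P_{l_1}w)*(\nabla P_{l_2}w)*(P_k h)$ with one derivative distributed suitably; note $\curl(-\Delta)^{-1}=D^1(-\Delta)^{-1}$ schematically, so the first factor carries one derivative and one inverse Laplacian.

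On $\mathcal{B}$ (where $l_1+l_2<k$), the largest frequency is always $k$, and since $k>l_1+l_2\ge l_1-l_2$ and $k>l_1+l_2$, we get $k=l_{\max}$ with a gap: writing $l_{(1)}\ge l_{(2)}\ge l_{(3)}$ for the sorted triple $\{k,l_1,l_2\}$, we have $l_{(1)}=k$ and $l_{(1)}-l_{(2)}=k-\max(l_1,l_2)\gtrsim l_1+l_2-\max(l_1,l_2)+1 = \min(l_1,l_2)+1\gtrsim l_{(3)}$, so $K\gtrsim 1$; more precisely $l_{(1)}=l_{(2)}+Kl_{(3)}+2$ forces $K\sim (k-l_{(2)})/l_{(3)}$, which can be as large as $k/l_{(3)}$. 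Picking $J$ large in \Lemref{trilinear_new}, the summand is $\lesssim_J l_{(3)}^{1/4} K^{-J}\, l_1^{-2}\|P_{l_1}w\|_2\, l_2\|P_{l_2}w\|_2\,\|P_k h\|_2$ (accounting for the one derivative, the inverse Laplacian on the $w$ factors), and since $K\gtrsim (k-l_{(2)})/l_{(3)}\gtrsim 1$ with plenty of room, the $K^{-J}$ factor crushes the sum: summing $\|P_{l_1}w\|_2\le A l_1^{-r}$, $\|P_{l_2}w\|_2\le B$ (via $\ell^2$), over the $O(k^2)$ pairs in $\mathcal{B}$ gives something $\lesssim_M A B\, k^{-N}$ for any $N$, far better than $k^{-(r-7/4)}$. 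On $\mathcal{A}$ (where $|l_1-l_2|>k$), again $l_{\max}$ is one of $l_1,l_2$ — say WLOG $l_1>l_2+k$, so $l_1=l_{(1)}$, and $l_{(1)}-l_{(2)}\ge l_1-\max(l_2,k)\ge l_1-l_2-k+\min(\text{something})$; the key point is $l_{(1)}-l_{(2)}>$ (roughly) $l_{(3)}$ when $l_3\lesssim k$, giving $K\gtrsim 1$ again, OR we need the finer splitting $l_1\gtrsim l_2$ vs. $l_2\gtrsim l_1$ with $k$ in the middle. Here I would split $\mathcal{A}$ by which of $k, l_{(2)}$ sits in the middle: in the subcase $l_1$ largest and $l_2$ smallest ($l_1>l_2+k$, $l_2\le k$), $K\sim (l_1-k)/l_2$; in the subcase $l_1$ largest and $k$ smallest ($l_1-l_2>k$ but $l_2>k$... impossible since then $l_1>2k>k$, fine, $K\sim(l_1-l_2)/k> 1$). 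In all subcases $K>1$ with room, and summing against the trilinear bound with large $J$, together with the elementary $\ell^p$ summation facts listed before \Lemref{clm:clm1}, yields the claimed $AB\,k^{-(r-7/4)}$ (indeed much better).

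The main obstacle is bookkeeping: I must carefully verify in each subcase of $\mathcal{A}$ and $\mathcal{B}$ that the hypothesis $l_{(1)}=l_{(2)}+Kl_{(3)}+2$ of \Lemref{trilinear_new} holds with $K>1$ and, crucially, that $K$ is bounded below by an increasing function of the "excess" (like $(k-l_1-l_2)/\min(l_1,l_2)$ or $(|l_1-l_2|-k)/\min(\cdot)$) so that choosing $J$ large enough (depending only on $r$) makes the geometric-type sum $\sum K^{-J}\cdot(\text{polynomial})$ converge with the right power of $k$. A subtle point is the boundary layer where $K$ is close to $1$ (i.e., $|l_1-l_2|$ just barely exceeds $k$, or $l_1+l_2$ just barely below $k$): there $K^{-J}$ gives no decay, so I must check the polynomial weights $l_1^{-r+a}l_2^{b}$ and the number of lattice points already suffice — but in that regime the three frequencies are comparable up to the largest being $\sim k$, so it reduces to an estimate of the same flavor as the $\mathcal{T}_3$ bound in \Lemref{clm:clm1}, using $2(r-1/4)>1$. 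Handling this transition cleanly — perhaps by a dyadic decomposition in $K$ — is where the argument needs the most care.
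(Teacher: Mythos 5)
Your plan is essentially the paper's proof: split $\mathcal{A}$ and $\mathcal{B}$ into ``distant'' subregions, handled by the trilinear estimate of \Lemref{trilinear_new} with $J$ large, and boundary layers where $K\leq 1$ or $K\approx 1$, handled by the bilinear estimate of \Lemref{bilinear_1_new} together with counting admissible lattice points --- exactly the decomposition into $\mathcal{A}_{1a},\dots,\mathcal{B}_{2c}$ carried out in the text. Two quantitative points need correcting when you execute it. First, the weight on the first factor is $l_1^{1-2}=l_1^{-1}$ (one derivative from $\curl$, one inverse Laplacian), not $l_1^{-2}$. Second, and more importantly, the distant-region contributions are \emph{not} $O(ABk^{-N})$ for all $N$: for instance on the part of $\mathcal{B}$ where $k\geq l_1+2l_2+2$ and $l_1\leq k/2$, one has $K\sim k/l_2$, and summing $l_2^{J+5/4}\|P_{l_2}w\|_2$ over $l_2\leq l_1$ costs $l_1^{J+7/4}B$ by H\"older, so the factor $k^{-J}$ is exactly cancelled by $l_1^{J}\lesssim k^{J}$ and the bound saturates at $ABk^{-(r-7/4)}$. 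The $K^{-J}$ gain is nevertheless essential there --- the bilinear estimate alone would only give $AB\max(1,k^{7/4-r})$, which fails for $r>7/4$ --- so the bookkeeping you defer to the end is genuinely tight rather than ``crushed with plenty of room,'' and the $7/4$ in the statement is dictated by precisely these borderline regions.
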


\begin{figure}
	\centering
	\includegraphics[width=\linewidth]{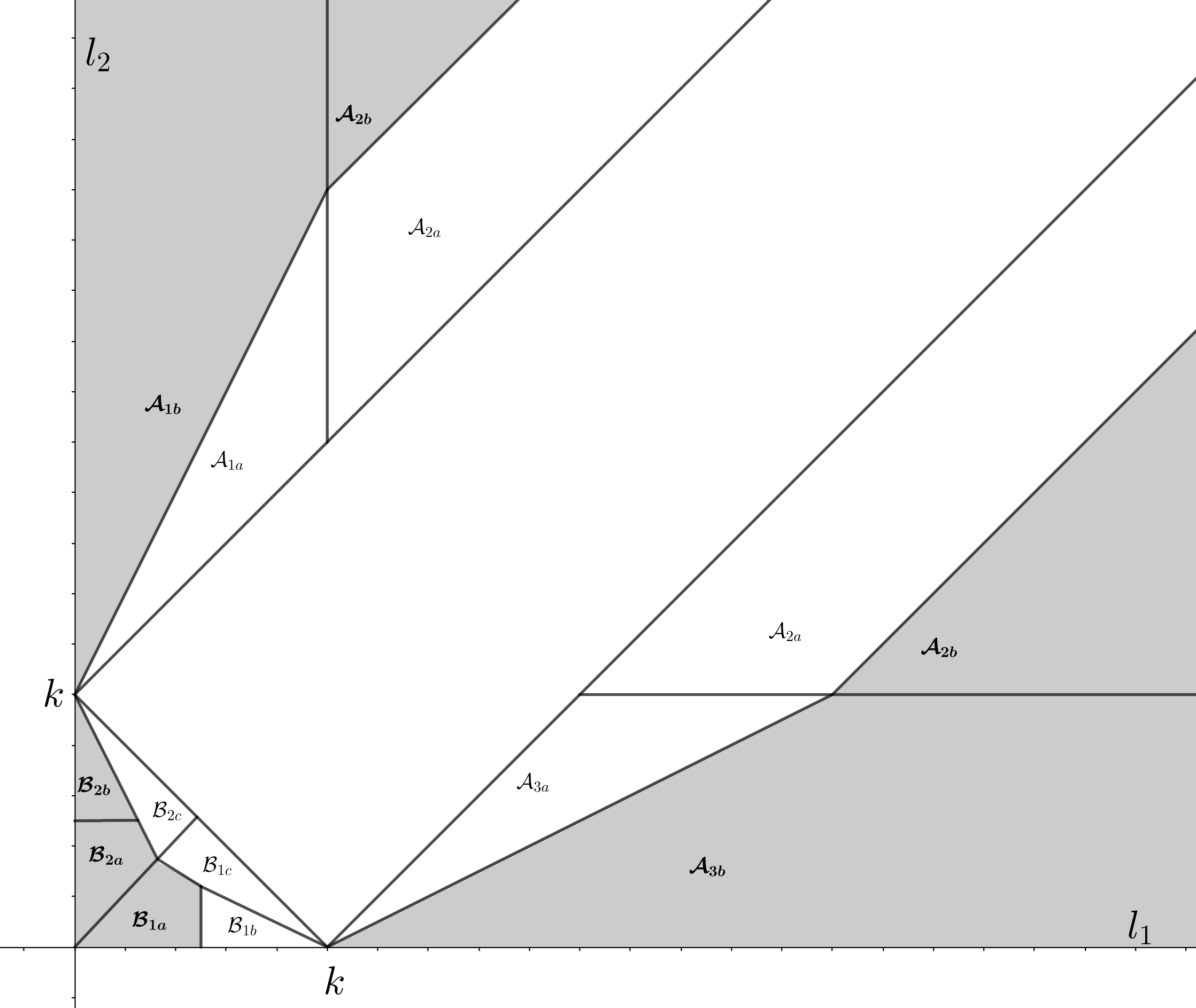}
	\caption{The non-triangle regions. Shaded regions indicate where the trilinear estimate of Lemma \ref{lem:trilinear_new} is used.}
	\label{fig:navierdiagram2}
\end{figure}

\begin{proof}
In using \Lemref{trilinear_new}, we will set $J$ as large as necessary.

We split $\mathcal{A}=\{\left|l_{1}-l_{2}\right|>k\}$ into smaller
regions
$$\mathcal{A}_1:=\{l_1\leq k\} \cap \mathcal{A},$$
$$\mathcal{A}_2:=\{l_1\geq k,l_2\geq k\} \cap \mathcal{A},$$
and
$$\mathcal{A}_3:=\{ l_{1}\geq k>l_{2}\} \cap \mathcal{A}.$$

We begin by estimating the contribution of $\mathcal{A}_1$.  For this, we consider the
contribution
$$\mathcal{A}_{1a}=\{l_{1}\leq k\leq l_{2}\leq k+2l_{1}+2\} \cap \mathcal{A},$$
for which $l_{1}\leq l_{2}\sim k$, and for each fixed $l_1$, there are at most $O(l_1)$ choices for the index $l_2$.  The contribution is then bounded by
\begin{align*}
&\sum_{l_{1}}\sum_{l_{2}}l_{1}^{1/4}\frac{1}{l_{1}}\left\Vert P_{l_{1}}w\right\Vert _{2}k\left\Vert P_{l_{2}}w\right\Vert _{2}\\
&\hspace{0.2in}\lesssim \sum_{l_{1}}l_{1}^{1/4}\left\Vert P_{l_{1}}w\right\Vert _{2}k\frac{A}{k^{r}}\\
&\hspace{0.2in}\lesssim k^{3/4}B\cdot\frac{A}{k^{r-1}}.
\end{align*}
To handle the contribution
$$\mathcal{A}_{1b}=\{l_{1}\leq k<k+2l_{1}+2<l_{2}\} \cap \mathcal{A},$$ where $k+2l_{1}+2\sim k$, we invoke
\Lemref{trilinear_new}, to bound the contribution by 
\begin{align}
\nonumber &\sum_{l_{1}}\sum_{l_{2}}l_{1}^{1/4}\frac{l_{1}^{J}}{\left(l_{2}-k-2\right)^{J}}\frac{1}{l_{1}}\left\Vert P_{l_{1}}w\right\Vert _{2}l_{2}\left\Vert P_{l_{2}}w\right\Vert _{2}\\
&\hspace{0.2in}\leq\sum_{l_{1}}l_{1}^{J-3/4}\left\Vert P_{l_{1}}w\right\Vert _{2}\sum_{l_{2}}\frac{A}{l_{2}^{r-1}\left(l_{2}-k-2\right)^{J}}\label{eq:a1.2}
\end{align}
Now, choose $p\in\left(1,\infty\right)$ and $J\in\mathbb{N}_{0}$
such that $\left(r-1\right)p>1$, $Jp'>1$, and $$2\left(\frac{1}{p'}-\frac{3}{4}\right)=2\left(\frac{1}{4}-\frac{1}{p}\right)>-1.$$  The condition $r>1$ ensures
that this choice is possible.  Using the H\"older inequality in the summation over $l_2$ to bound by the $\ell_{l_2}^p$ and $\ell_{l_2}^{p'}$ norms, we then have
\begin{align*} 
(\ref{eq:a1.2})&\lesssim \sum_{l_{1}}l_{1}^{J-3/4}\left\Vert P_{l_{1}}w\right\Vert _{2}A\frac{1}{k^{r-1-\frac{1}{p}}l_{1}^{J-\frac{1}{p'}}}\\
&=\frac{A}{k^{r-1-\frac{1}{p}}}\sum_{l_{1}}l_{1}^{\frac{1}{p'}-\frac{3}{4}}\left\Vert P_{l_{1}}w\right\Vert _{2}\\
&\lesssim\frac{A}{k^{r-1-1/p}}Bk^{\frac{1}{p'}-\frac{1}{4}},
\end{align*}
where we have used the H\"older inequality again to obtain the last inequality.  This completes the estimate of the $\mathcal{A}_1$ contribution.

To estimate the contribution of $\mathcal{A}_2$, we again subdivide into further cases.  We first consider the contribution from 
$$\mathcal{A}_{2a}=\{k<\left|l_{1}-l_{2}\right|<2k+2\}\cap \mathcal{A}_2.$$
Here, we have $k\lesssim l_{1}\sim l_{2}$, and we invoke the change of variable $l_{2}=l_{1}+j,$ where $\left|j\right|\lesssim k$.
The contribution is bounded by
\begin{align*}
\sum_{j}\sum_{l_{1}}l_{1}^{1/4}\left\Vert P_{l_{1}}w\right\Vert _{2}\left\Vert P_{l_{1}+j}w\right\Vert _{2}&\leq\sum_{j}\sum_{l_{1}}\frac{A}{l_{1}^{r-1/4}}\left\Vert P_{l_{1}+j}w\right\Vert _{2}\\
&\lesssim\sum_{j}\frac{A}{k^{r-3/4}}B\lesssim\frac{AB}{k^{r-7/4}},
\end{align*}
where the last line follows from the H\"older inequality, and where we have used $2\left(r-1/4\right)>1$.

The remaining contribution from $\mathcal{A}_2$ is the contribution of
$$\mathcal{A}_{2b}=\{2k+2\leq\left|l_{1}-l_{2}\right|\}\cap \mathcal{A}_2.$$  Here, we have $k\geq1+\lambda_{1}$,
and thus $\left|\left|l_{1}-l_{2}\right|-2\right|\sim\left|l_{1}-l_{2}\right|$.
Using \Lemref{trilinear_new}, the contribution is bounded by
\begin{align}
\nonumber &\sum_{l_{1}}\sum_{l_{2}}l_{1}^{1/4}\frac{k^{J}}{\left|l_{2}-l_{1}\right|^{J}}\frac{1}{l_{1}}\left\Vert P_{l_{1}}w\right\Vert _{2}l_{2}\left\Vert P_{l_{2}}w\right\Vert _{2}\\
&\hspace{0.2in}\leq Ak^{J}\sum_{l_{1}}\frac{1}{l_{1}^{3/4}}\left\Vert P_{l_{1}}w\right\Vert _{2}\sum_{l_{2}}\frac{1}{\left|l_{2}-l_{1}\right|^{J}}\cdot\frac{1}{l_{2}^{r-1}}\label{eq:a2.2}
\end{align}
Choosing $J$ and $p$ such that $Jp>1,\left(r-1\right)p'>1$ (this is possible, since $r>1$), and using the H\"older inequality to estimate the summation in $l_2$ by appropriate $\ell_{l_2}^p$ and $\ell_{l_2}^{p'}$ norms, we obtain
\begin{align*}
(\ref{eq:a2.2})&\lesssim Ak^{J}\sum_{l_{1}}\frac{1}{l_{1}^{3/4}}\left\Vert P_{l_{1}}w\right\Vert _{2}\frac{1}{k^{J-1/p}}\cdot\frac{1}{k^{r-1-1/p'}}\\
&=A\frac{1}{k^{r-2}}\sum_{l_{1}}\frac{1}{l_{1}^{3/4}}\left\Vert P_{l_{1}}w\right\Vert _{2}\\
&\lesssim A\frac{1}{k^{r-2}}\frac{1}{k^{1/4}}B,
\end{align*}
where the last line follows from another application of the H\"older inequality.

We now address the contribution of $\mathcal{A}_3$.  This further splits into:
\begin{align}
\mathcal{A}_{3a}=\{k+2l_{2}+2>l_{1}\geq k>l_{2}\}\cap\mathcal{A}_3\label{eq:a3.1}
\end{align}
and
\begin{align}
\mathcal{A}_{3b}=\{l_{1}\geq k+2l_{2}+2\geq k>l_{2}\}\cap\mathcal{A}_3.\label{eq:a3.2}
\end{align}
To handle the contribution of (\ref{eq:a3.1}), note that in this case $l_{2}<k\sim l_{1}$, and that for each $l_{2}$, the number of choices of $l_1$ contributing to the sum is $O(l_2)$.  The contribution is thus bounded by
\begin{align*}
&\sum_{l_{2}}\sum_{l_{1}}l_{2}^{1/4}\frac{1}{k}\left\Vert P_{l_{1}}w\right\Vert _{2}l_{2}\left\Vert P_{l_{2}}w\right\Vert _{2}\\
&\hspace{0.2in}\lesssim\sum_{l_{2}}\sum_{l_{1}}\frac{A}{k^{r+1}}l_{2}^{5/4}\left\Vert P_{l_{2}}w\right\Vert _{2}\\
&\hspace{0.2in}\lesssim \frac{A}{k^{r+1}}\sum_{l_{2}}l_{2}^{9/4}\left\Vert P_{l_{2}}w\right\Vert _{2}\\
&\hspace{0.2in}\lesssim \frac{A}{k^{r+1}}k^{11/4}B=\frac{AB}{k^{r-7/4}},
\end{align*}
where, in passing from the second to third lines, we've used the bound on the number of terms in the summation over $l_1$, and in passing to the last line, we've used the H\"older inequality.

We now turn to the contribution of (\ref{eq:a3.2}).  Here $k+2l_{2}+2\sim k$.
Using \Lemref{trilinear_new}, this contribution is bounded by
\begin{align*}
&\sum_{l_{2}}\sum_{l_{1}}l_{2}^{1/4}\frac{l_{2}^{J}}{\left(l_{1}-k-2\right)^{J}}\frac{1}{l_{1}}\left\Vert P_{l_{1}}w\right\Vert _{2}l_{2}\left\Vert P_{l_{2}}w\right\Vert _{2}\\
&\hspace{0.2in}\lesssim\sum_{l_{2}}\sum_{l_{1}}\frac{l_{2}^{J+5/4}}{\left(l_{1}-k-2\right)^{J}}\frac{A}{l_{1}^{r+1}}\left\Vert P_{l_{2}}w\right\Vert _{2}\\
&\hspace{0.2in}\lesssim\sum_{l_{2}}\frac{l_{2}^{J+5/4}}{l_{2}^{J-1/2}}\frac{A}{k^{r+1/2}}\left\Vert P_{l_{2}}w\right\Vert _{2}\\
&\hspace{0.2in}\lesssim k^{9/4}\frac{A}{k^{r+1/2}}B=\frac{AB}{k^{r-7/4}},
\end{align*}
where we have used the H\"older inequality in the last two lines.

We similarly divide $\mathcal{B}=\{l_{1}+l_{2}<k\}$ into smaller regions.  The first of these is $\mathcal{B}_1=\{l_{1}\geq l_{2}\}$, which we subdivide into two
further sets of indices.  The first contribution is that of
$$\mathcal{B}_{1a}=\{k\geq l_{1}+2l_{2}+2;l_{1}\leq\frac{k}{2}\}\cap\mathcal{B}_1.$$ 
Here, because $k\geq 10$, we have $k-l_{1}-2\sim k$, and the 
contribution is bounded by
\begin{align*}
&\sum_{l_{2}}\sum_{l_{1}}l_{2}^{1/4}\frac{l_{2}^{J}}{k^{J}}\frac{1}{l_{1}}\left\Vert P_{l_{1}}w\right\Vert _{2}l_{2}\left\Vert P_{l_{2}}w\right\Vert _{2}\\
&\hspace{0.2in}\leq\frac{1}{k^{J}}\sum_{l_{1}}\sum_{l_{2}}l_{2}^{J+\frac{5}{4}}\frac{1}{l_{1}^{r+1}}A\left\Vert P_{l_{2}}w\right\Vert _{2}\\
&\hspace{0.2in}\lesssim\frac{1}{k^{J}}\sum_{l_{1}}l_{1}^{J+\frac{7}{4}}\frac{1}{l_{1}^{r+1}}AB\\
&\hspace{0.2in}\lesssim\frac{1}{k^{J}}k^{J+\frac{7}{4}-r}AB=\frac{AB}{k^{r-7/4}},
\end{align*}
where we have again used the H\"older inequality to pass from the second to third lines, and where we have used $J+\frac{7}{4}-r-1>-1$ (which holds trivially).

The two remaining subdivisions of the index set $\{l_1\geq l_2\}$ are 
$$\mathcal{B}_{1b}=\{k\geq l_{1}+2l_{2}+2;l_{1}>\frac{k}{2}\}\cap\mathcal{B}_1\quad\textrm{and}\quad \mathcal{B}_{1c}=\{l_{1}+2l_{2}+2>k\}\cap\mathcal{B}_1.$$
In both cases, $l_{1}\sim k$, and the contribution is bounded by
\begin{align*}
&\sum_{l_{2}}\sum_{l_{1}}l_{2}^{1/4}\frac{1}{k}\left\Vert P_{l_{1}}w\right\Vert _{2}l_{2}\left\Vert P_{l_{2}}w\right\Vert _{2}\\
&\hspace{0.2in}\lesssim\frac{A}{k^{r+1}}\sum_{l_{2}}\sum_{l_{1}}l_{2}^{5/4}\left\Vert P_{l_{2}}w\right\Vert _{2}\\
&\hspace{0.2in}\lesssim\frac{A}{k^{r}}\sum_{l_{2}}l_{2}^{5/4}\left\Vert P_{l_{2}}w\right\Vert _{2}\\
&\hspace{0.2in}\lesssim\frac{A}{k^{r}}k^{\frac{7}{4}}B=\frac{AB}{k^{r-7/4}}
\end{align*}
where in passing from the second to third lines we have used that for each $l_2$ there are at most $O(k)$ choices of index $l_1$ contributing to the summation, and in passing to the last line, we've use the H\"older inequality.

The second region contributing to $\mathcal{B}$ is $\mathcal{B}_2=\{l_{1}<l_{2}\}$. This again further splits into several parts.  The first such contribution is
that of $$\mathcal{B}_{2a}=\{k\geq2l_{1}+l_{2}+2;l_{2}\leq\frac{k}{2}\}\cap\mathcal{B}_2.$$ 
Here, because $k\geq 10$, we have  $k-l_{2}-2\sim k$, and the contribution is bounded by
\begin{align*}
&\sum_{l_{2}}\sum_{l_{1}}l_{1}^{1/4}\frac{l_{1}^{J}}{k^{J}}\frac{1}{l_{1}}\left\Vert P_{l_{1}}w\right\Vert _{2}l_{2}\left\Vert P_{l_{2}}w\right\Vert _{2}\\
&\hspace{0.2in}\leq\frac{1}{k^{J}}\sum_{l_{2}}\sum_{l_{1}}l_{1}^{J-\frac{3}{4}}\left\Vert P_{l_{1}}w\right\Vert _{2}\frac{1}{l_{2}^{r-1}}A\\
&\hspace{0.2in}\lesssim\frac{1}{k^{J}}\sum_{l_{2}}l_{2}^{J-\frac{1}{4}}B\frac{1}{l_{2}^{r-1}}A\\
&\hspace{0.2in}\lesssim\frac{1}{k^{J}}k^{J+3/4-r+1}BA=\frac{AB}{k^{r-7/4}}
\end{align*}
where in passing from the second to third lines we have used the H\"older inequality, and where we have used $J+\frac{3}{4}-r>-1$ and $2\left(J-3/4\right)>-1$ (which hold trivially) to bound the sums.

The next contribution comes from
$$\mathcal{B}_{2b}=\{k\geq2l_{1}+l_{2}+2;l_{2}>\frac{k}{2}\}\cap\mathcal{B}_2.$$
For these terms, we have $l_{2}\sim k$, and the contribution is bounded by
\begin{align*}
&\sum_{l_{2}}\sum_{l_{1}}l_{1}^{1/4}\frac{l_{1}^{J}}{\left(k-l_{2}-2\right)^{J}}\frac{1}{l_{1}}\left\Vert P_{l_{1}}w\right\Vert _{2}k\left\Vert P_{l_{2}}w\right\Vert _{2}\\
&\hspace{0.2in}\lesssim\frac{A}{k^{r-1}}\sum_{l_{1}}\sum_{l_{2}}\frac{l_{1}^{J-3/4}}{\left(k-l_{2}-2\right)^{J}}\left\Vert P_{l_{1}}w\right\Vert _{2}\\
&\hspace{0.2in}\lesssim\frac{A}{k^{r-1}}\sum_{l_{1}}\frac{l_{1}^{J-3/4}}{l_{1}^{J-1}}\left\Vert P_{l_{1}}w\right\Vert _{2}\\
&\hspace{0.2in}\lesssim\frac{A}{k^{r-1}}k^{3/4}B=\frac{AB}{k^{r-7/4}},
\end{align*}
where we've again used the H\"older inequality in passing to the last line.

It remains to estimate the contribution from
$$\mathcal{B}_{2c}=\{2l_{1}+l_{2}+2>k\}\cap\mathcal{B}_2.$$ 
Here, $l_{2}\sim k$. 
\begin{align*}
&\sum_{l_{2}}\sum_{l_{1}}l_{1}^{1/4}\frac{1}{l_{1}}\left\Vert P_{l_{1}}w\right\Vert _{2}k\left\Vert P_{l_{2}}w\right\Vert _{2}\\
&\hspace{0.2in}\lesssim\frac{A}{k^{r-1}}\sum_{l_{1}}\sum_{l_{2}}\frac{1}{l_{1}^{3/4}}\left\Vert P_{l_{1}}w\right\Vert _{2}\\
&\hspace{0.2in}\lesssim\frac{A}{k^{r-1}}\sum_{l_{1}}l_{1}^{1/4}\left\Vert P_{l_{1}}w\right\Vert _{2}\\
&\hspace{0.2in}\lesssim\frac{A}{k^{r-1}}k^{3/4}B=\frac{AB}{k^{r-7/4}}
\end{align*}
where in passing from the second line to the third line we have observed that, since $l_{2}<k-l_{1}$, for each fixed $l_{1}$ there are at most $O(l_1)$ choices for the index $l_2$, and in passing to the last line, we've used the H\"older inequality.  

The proof of the claim is now complete.
\end{proof}

\subsection{The harmonic term}

We note that for all $m\in\mathbb{N}_{0}$, $$\left\Vert D^{1}\mathcal{P}_{\mathcal{H}}u\right\Vert _{H^{2m}}\lesssim\left\Vert \mathcal{P}_{\mathcal{H}}u\right\Vert _{H^{2m+1}}\lesssim_{m}\left\Vert \mathcal{P}_{\mathcal{H}}u\right\Vert _{2}\lesssim B.$$ 

We also observe that 
\begin{align*}
\left\Vert D^{1}\mathcal{P}_{\mathcal{H}}u\right\Vert _{H^{2m}} & \sim_{M}\left\Vert \pi_{0}D^{1}\mathcal{P}_{\mathcal{H}}u\right\Vert _{2}+\left\Vert \left(1-\pi_{0}\right)D^{1}\mathcal{P}_{\mathcal{H}}u\right\Vert _{H^{2m}}\\
&\sim\left\Vert \pi_{0}D^{1}\mathcal{P}_{\mathcal{H}}u\right\Vert _{2}+\left\Vert \Delta^{m}D^{1}\mathcal{P}_{\mathcal{H}}u\right\Vert _{2}\\
 & \sim\left\Vert \pi_{0}D^{1}\mathcal{P}_{\mathcal{H}}u\right\Vert _{2}+\left\Vert \left\Vert P_{k}k^{2m}D^{1}\mathcal{P}_{\mathcal{H}}u\right\Vert _{2}\right\Vert _{l_{k}^{2}\left(\mathbb{N}_{0}+\lambda_{1}\right)}.
\end{align*}
As a consequence, for all $k\in\mathbb{N}_{0}+\lambda_{1}$ and $m\in\mathbb{N}_{0}$, we have
$$\left\Vert P_{k}D^{1}\mathcal{P}_{\mathcal{H}}u\right\Vert \lesssim_{M,m}\frac{B}{k^{2m}}.$$
Choosing $m=m\left(r\right)$ large enough then leads to
\[
\left\Vert P_{k}D^{1}\mathcal{P}_{\mathcal{H}}u\right\Vert _{2}\lesssim_{M,r}\frac{AB}{k^{r-7/4}}.
\]

\subsection{The linear terms}

All the remaining terms can be can be summarized by the following
estimate, which can be proved by a stationary phase argument.
\begin{claim}
Let $a,b\in\mathbb{N}_{0}$ such that $a-2b\leq1$. We write $D_{B}^{k}$
as a schematic for a spatial differential operator of order
$k$, such that any local coefficients $c(x)$ of $D_{B}^{k}$ satisfy
\[
\left\Vert c(x)\right\Vert _{C^{m}}\lesssim_{m,\neg B}B
\]
Then for all $k\in\mathbb{N}_{0}+\lambda_{1}+10$,
\[
\sum_{l\in\mathbb{N}_{0}+\lambda_{1}}\left\Vert P_{k}\left(D_{B}^{a}\left(-\Delta\right)^{-b}P_{l}w\right)\right\Vert _{2}\lesssim_{a,b,\neg k}\frac{AB}{k^{r-7/4}}.
\]
\end{claim}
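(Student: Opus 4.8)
The plan is to dualize, absorb the factor $(-\Delta)^{-b}$ on the spectral band $[l,l+1)$ where it acts essentially as the scalar $l^{-2b}$, and thereby reduce the whole sum to a single off-diagonal operator-norm bound for the sandwiched operators $P_{k}QP_{l}$, which I would then obtain by the commutator / integration-by-parts device referred to in the statement. For the reduction, fix $l$: since $(-\Delta)^{-b}$ commutes with $P_{l}$ and has norm $\le l^{-2b}$ on $\Range(P_{l})$, the scalar $v:=(-\Delta)^{-b}P_{l}w$ lies in $\Range(P_{l})$ with $\left\Vert v\right\Vert _{2}\le l^{-2b}\left\Vert P_{l}w\right\Vert _{2}\le Al^{-r-2b}$. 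Viewing $D_{B}^{a}$ as mapping scalars into its target bundle $E$, testing against $h$ with $\left\Vert h\right\Vert _{2}\le1$, and using self-adjointness of $P_{k},P_{l}$ together with $v=P_{l}v$, one has $\left\langle \left\langle P_{k}(D_{B}^{a}v),h\right\rangle \right\rangle =\left\langle \left\langle v,P_{l}(D_{B}^{a})^{*}P_{k}h\right\rangle \right\rangle$, and hence
\[
\left\Vert P_{k}\bigl(D_{B}^{a}(-\Delta)^{-b}P_{l}w\bigr)\right\Vert _{2}\le Al^{-r-2b}\left\Vert P_{l}(D_{B}^{a})^{*}P_{k}\right\Vert _{L^{2}\to L^{2}}.
\]
Since $(D_{B}^{a})^{*}$ is again a differential operator of order $a$ whose coefficients are $\lesssim_{m}B$ in every $C^{m}$ (taking the adjoint only inserts $M$-dependent factors), it suffices to prove: for every $N\in\mathbb{N}_{0}$ and every order-$a$ operator $Q$ with $C^{m}$-coefficients $\lesssim_{m}B$,
\[
\left\Vert P_{k}QP_{l}\right\Vert _{L^{2}\to L^{2}}\ \lesssim_{a,N,M}\ \frac{B\,\max(k,l)^{a}}{\langle k-l\rangle^{N}}\qquad\text{for all }k,l\in\mathbb{N}_{0}+\lambda_{1}
\]
($P_{k},P_{l}$ on the relevant bundles); this also covers the diagonal $l=k$, bounded crudely by $B\,k^{a}$.

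For the off-diagonal bound, if $|k-l|\le1$ this is the elementary estimate $\left\Vert P_{k}QP_{l}\right\Vert _{L^{2}\to L^{2}}\le\left\Vert QP_{l}\right\Vert _{L^{2}\to L^{2}}\lesssim_{a}B\,(l+1)^{a}$, using $\left\Vert Q\psi\right\Vert _{2}\lesssim_{a}B\left\Vert \psi\right\Vert _{H^{a}}$ and $\left\Vert P_{l}\psi\right\Vert _{H^{a}}\lesssim_{a}(l+1)^{a}\left\Vert P_{l}\psi\right\Vert _{2}$ for $l\ge\lambda_{1}$. For $|k-l|\ge2$, I would use that $M$ compact makes each $\Range(P_{m})$ finite-dimensional with $-\Delta$ restricted to it of spectrum in $[m^{2},(m+1)^{2})$, regard $X:=P_{k}QP_{l}$ as a map $\Range(P_{l})\to\Range(P_{k})$, and invoke the identity
\[
(-\Delta)X-X(-\Delta)=P_{k}\,[(-\Delta),Q]\,P_{l},
\]
in which $[(-\Delta),Q]$ is a differential operator of order $a+1$ with $C^{m}$-coefficients still $\lesssim_{m}B$ — this is the schematic identity $[D^{2},D^{a}]=D^{a+1}$ recorded in Section~\ref{sec:notation}, valid because $-\Delta$ is Laplace-type, so its scalar principal symbol forces the order-$(a+2)$ part of the composition to cancel. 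On the finite-dimensional bands this is a Sylvester equation $\Lambda_{k}X-X\Lambda_{l}=P_{k}[(-\Delta),Q]P_{l}$ with $\sigma(\Lambda_{k})\subset[k^{2},(k+1)^{2})$ and $\sigma(\Lambda_{l})\subset[l^{2},(l+1)^{2})$; the Neumann series solving it, peeled from the side of the larger of $k,l$, converges because $(\min(k,l)+1)^{2}<\max(k,l)^{2}$ and gives $\left\Vert P_{k}QP_{l}\right\Vert \le\bigl(\max(k,l)^{2}-(\min(k,l)+1)^{2}\bigr)^{-1}\left\Vert P_{k}[(-\Delta),Q]P_{l}\right\Vert$. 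Iterating this $N$ times (each step raises the order by one and preserves the coefficient bound), closing with the crude bound $\lesssim_{a,N}B\,\max(k,l)^{a+N}$, and using $\max(k,l)^{2}-(\min(k,l)+1)^{2}\gtrsim|k-l|\max(k,l)$, yields the asserted decay.

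For the summation, combining the two displays above gives
\[
\sum_{l\in\mathbb{N}_{0}+\lambda_{1}}\left\Vert P_{k}\bigl(D_{B}^{a}(-\Delta)^{-b}P_{l}w\bigr)\right\Vert _{2}\ \lesssim_{a,N,M}\ AB\sum_{l\in\mathbb{N}_{0}+\lambda_{1}}\frac{\max(k,l)^{a}}{l^{\,r+2b}\,\langle k-l\rangle^{N}},
\]
and I would split the sum into $l<k/2$, $k/2\le l\le3k/2$, and $l>3k/2$, with $N=N(a,r)$ large. On the middle band $\max(k,l)\sim l\sim k$ and $\langle k-l\rangle^{-N}$ is summable, so that piece is $\lesssim AB\,k^{a-2b-r}=AB\,k^{-(r-(a-2b))}\le AB\,k^{-(r-1)}$, where $a-2b\le1$ is used; on the two outer bands $\langle k-l\rangle\gtrsim\max(k,l)$, and the surplus powers make those contributions negligible (using $r+2b>1$, so $\sum_{l}l^{-(r+2b)}<\infty$, for the low tail, and that the high tail of $\sum_{l}l^{-(r-(a-2b)+N)}$ is $\lesssim k^{-(r-(a-2b)+N-1)}$). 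Since $r-1\ge r-\tfrac74$ and $k\ge1$, the total is $\lesssim_{a,b,r,M}AB\,k^{-(r-7/4)}$, which is exactly the Claim.

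The only real difficulty is the off-diagonal bound: one needs rapid decay in the additive gap $\langle k-l\rangle$ \emph{uniformly}, with no hypothesis on how eigenvalues are distributed — which is precisely why the argument must pass through the coarse bands $[m,m+1)$ and the crude spectral gap $\max(k,l)^{2}-(\min(k,l)+1)^{2}$ rather than individual eigenvalue separations, which can be arbitrarily small even when $|k-l|=1$. The one genuine ingredient is that commuting $Q$ with the Laplace-type operator $-\Delta$ drops its order by one, so one may iterate the commutation as many times as needed; the adjoint bookkeeping, the finite-dimensional Neumann series, and the three-range summation are all routine.
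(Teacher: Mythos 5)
Your proof is correct, but it takes a genuinely different --- and more elementary --- route than the paper's. The paper also dualizes, but then splits the sum into a critical region $|l-k|\le k^{\varepsilon}$, handled with the bilinear eigenfunction estimate of Lemma \ref{lem:bilinear_1_new} (paying the $l^{1/4}$ loss and an $O(k^{\varepsilon})$ count of terms, whence its exponent $r-a+2b-\tfrac14-\varepsilon$), and a far region, where it descends to individual eigenspace projections $\pi_{l+z_{1}},\pi_{k+z_{2}}$, iterates the commutator identity against the exact eigenvalue gap $(k+z_{2})^{2}-(l+z_{1})^{2}$, and then needs the Fourier trick to reassemble the resulting symbol into a band-level statement to which Lemma \ref{lem:bilinear_1_new} again applies. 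You instead prove a single off-diagonal operator-norm bound $\left\Vert P_{k}QP_{l}\right\Vert _{L^{2}\to L^{2}}\lesssim B\max(k,l)^{a}\left\langle k-l\right\rangle ^{-N}$ by iterating a Sylvester-equation/spectral-gap argument at the level of the coarse bands $[m,m+1)$, and never invoke the bilinear eigenfunction estimates or the Fourier trick at all. This exploits the fact that the term is linear in $w$, so an $L^{2}\to L^{2}$ operator norm suffices; and working with the gap $\max(k,l)^{2}-(\min(k,l)+1)^{2}$ between whole bands (rather than between individual eigenvalues, which can be arbitrarily close) is precisely what renders the Fourier trick unnecessary. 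Near the diagonal you even obtain the slightly better exponent $k^{-(r-1)}$ versus the paper's $k^{-(r-5/4-\varepsilon)}$, though both of course suffice for the stated $k^{-(r-7/4)}$. The common core of the two arguments is the same: commuting the coefficients with a Laplace-type operator drops the order by one, so the commutation can be iterated as often as needed. The paper's route has the organizational advantage of reusing machinery already built for the convective term; yours is self-contained and shorter for this particular claim.
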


\begin{proof}
This is equivalent to proving that for any $\left(v_{l}\right)_{l\in\mathbb{N}_{0}+\lambda_{1}}$
where $\left\Vert v_{l}\right\Vert _{L^{2}M}\leq1$ for all $l$, we have the bound
\begin{align}
\nonumber &\sum_{l\in\mathbb{N}_{0}+\lambda_{1}}\left|\left\langle \left\langle P_{k}\left(D_{B}^{a}\left(-\Delta\right)^{-b}P_{l}w\right),v_{l}\right\rangle \right\rangle \right|\\
\nonumber &\hspace{1.8in}=\sum_{l\in\mathbb{N}_{0}+\lambda_{1}}\left|\left\langle \left\langle D_{B}^{a}\left(-\Delta\right)^{-b}P_{l}w,P_{k}v_{l}\right\rangle \right\rangle \right|\\
&\hspace{1.8in}\lesssim_{\neg k}\frac{AB}{k^{r-7/4}}.\label{eq:claim15}
\end{align}

To show (\ref{eq:claim15}), fix $\varepsilon\in\left(0,\frac{1}{2}\right)$.  Handling the ``critical
region'' $\left[k-k^{\varepsilon},k+k^{\varepsilon}\right]$ (where $l\sim_{\varepsilon}k$) is simple: 
\begin{align*}
&\sum_{l\in\left[k-k^{\varepsilon},k+k^{\varepsilon}\right]}\left|\left\langle \left\langle D_{B}^{a}\left(-\Delta\right)^{-b}P_{l}w,P_{k}v_{l}\right\rangle \right\rangle \right|\\
&\hspace{1.2in}\lesssim\sum_{l}l^{1/4}l^{a-2b}B\left\Vert P_{l}w\right\Vert _{2}\\
&\hspace{1.2in}\lesssim_{\varepsilon}\sum_{l}\frac{AB}{k^{r-a+2b-\frac{1}{4}}}\\
&\hspace{1.2in}\lesssim\frac{AB}{k^{r-a+2b-\frac{1}{4}-\varepsilon}},
\end{align*}
where the last inequality follows from the observation that there are at most $O(k^\varepsilon)$ choices of $l$ contributing to the summation.
For the region away from $k$, we employ integration by parts to get arbitrary decay, as in the method of stationary phase.

Fix $m=m\left(\varepsilon,r,a\right)\in\mathbb{N}_{1}$ such that
$\varepsilon(m-1)>r+a$. Observe that when $\left|l-k\right|>k^{\varepsilon}$,
we can employ the Fourier trick:
\begin{align}
\nonumber &\left\langle \left\langle D_{B}^{a}\left(-\Delta\right)^{-b}P_{l}w,P_{k}v_{l}\right\rangle \right\rangle \\
\nonumber &\hspace{0.2in}=\sum\limits _{\substack{z_{1}\in[0,1)\cap\left(\sigma\left(\sqrt{-\Delta}\right)-l\right)\\
z_{2}\in[0,1)\cap\left(\sigma\left(\sqrt{-\Delta}\right)-k\right)
}
}\frac{1}{\left(l+z_{1}\right)^{2b}}\left\langle \left\langle D_{B}^{a}\pi_{l+z_{1}}w,\pi_{k+z_{2}}v_{l}\right\rangle \right\rangle \\
\nonumber &\hspace{0.2in}=\sum_{z_{1},z_{2}}\frac{1}{\left(l+z_{1}\right)^{2b}}\left\langle \left\langle \pi_{l+z_{1}}w,D_{B}^{a}\pi_{k+z_{2}}v_{l}\right\rangle \right\rangle\\
&\hspace{0.2in}=\sum_{z_{1},z_{2}}\frac{1}{\left(l+z_{1}\right)^{2b}}\cdot\frac{1}{\left(k+z_{2}\right)^{2}-\left(l+z_{1}\right)^{2}}\left\langle \left\langle \pi_{l+z_{1}}w,\left[D_{B}^{a},-\Delta\right]\pi_{k+z_{2}}v_{l}\right\rangle \right\rangle.\label{eq:fourier}
\end{align}
An induction argument now shows that 
\begin{align*}
(\ref{eq:fourier})&=\sum_{z_{1},z_{2}}\frac{1}{\left(\left(l+z_{1}\right)^{2}\right)^{b}}\cdot\frac{1}{\left(\left(k+z_{2}\right)^{2}-\left(l+z_{1}\right)^{2}\right)^{m}}\left\langle \left\langle \pi_{l+z_{1}}w,D_{B}^{a+m}\pi_{k+z_{2}}v_{l}\right\rangle \right\rangle.
\end{align*}
As in the proof of \Lemref{bilinear_1_new}, we let $\Psi\left(z_{1},z_{2}\right)=\frac{1}{\left(\left(l+z_{1}\right)^{2}\right)^{b}}\cdot\frac{1}{\left(\left(k+z_{2}\right)^{2}-\left(l+z_{1}\right)^{2}\right)^{m}}$
and observe that 
\[
\left\Vert \Psi\right\Vert _{C^{2}\left(\left[0,1\right]^{2}\right)}\lesssim_{\lambda_{1}}\frac{1}{l^{2b}}\cdot\frac{1}{\left(k^{2}-l^{2}\right)^{m}}\lesssim_{\lambda_{1},b}\frac{1}{\left(k^{2}-l^{2}\right)^{m}}
\]
So by the Fourier trick and \Lemref{bilinear_1_new}, we conclude
\begin{align*}
&\left|\left\langle \left\langle D_{B}^{a}\left(-\Delta\right)^{-b}P_{l}w,P_{k}v_{l}\right\rangle \right\rangle \right|\\
&\hspace{0.2in}\lesssim_{M,m,\neg l,\neg k}\frac{1}{\left(k^{2}-l^{2}\right)^{m}}k^{\frac{1}{4}}\left\Vert P_{l}\omega\right\Vert _{2}k^{m+a}\left\Vert P_{k}v_{l}\right\Vert _{2}B\\
&\hspace{0.2in}\lesssim_{\lambda_{1}}\frac{k^{m+a+1/4}}{\left(k^{2}-l^{2}\right)^{m}}AB.
\end{align*}
We observe that, setting $z:=l-k$,
\begin{align*}
\sum_{l\notin\left[k-k^{\varepsilon},k+k^{\varepsilon}\right]}\frac{k^{m+a+1/4}}{\left(k^{2}-l^{2}\right)^{m}}&\lesssim\int_{|z|>k^{\varepsilon}}\frac{k^{m+a+1/4}}{\left|2zk+z^{2}\right|^{m}}\;\mathrm{d}z\\
&\lesssim\int_{|z|>k^{\varepsilon}}\frac{k^{a+1/4}}{\left|z\right|^{m}}\;\mathrm{d}z\\
&\lesssim_{m}\frac{k^{a+1/4}}{k^{\varepsilon\left(m-1\right)}}.
\end{align*}
Because of the way we picked $m$, we conclude 
\[
\sum_{l\notin\left[k-k^{\varepsilon},k+k^{\varepsilon}\right]}\left|\left\langle \left\langle D^{a}\left(-\Delta\right)^{-b}P_{l}w,P_{k}v_{l}\right\rangle \right\rangle \right|\lesssim_{M,r,a,b,\neg k}\frac{AB}{k^{r-7/4}},
\]
which completes the proof of the claim.
\end{proof}

\appendix

\section{Review of Differential geometry}
\label{appendix:notation}

In this appendix we recall our conventions for some standard notation from differential geometry which we use throughout
the paper.  For any tensor $T_{a_{1}...a_{k}}$, $\left(\nabla T\right)_{ia_{1}...a_{k}}=\nabla_{i}T_{a_{1}...a_{k}}$
and $\Div_{g}T=\nabla^{i}T_{ia_{2}...a_{k}}.$

Moreover,
$$\left(d\omega\right)_{ba_{1}...a_{k}}=\left(k+1\right)\widetilde{\nabla}_{[b}\omega_{a_{1}...a_{k}]}\;\forall\omega\in\Omega^{k}(M),$$
where $\widetilde{\nabla}$ is any torsion-free connection,
$$\left(\delta\omega\right)_{a_{1}...a_{k-1}}=-\nabla^{b}\omega_{ba_{1}...a_{k-1}}=-(\Div_{g}w)_{a_{1}...a_{k-1}}\forall\omega\in\Omega^{k}(M),$$
and
$$\left(\nabla_{a}\nabla_{b}-\nabla_{b}\nabla_{a}\right)T^{ij}{}_{kl}=-R_{ab\sigma}{}^{i}T^{\sigma j}{}_{kl}-R_{ab\sigma}{}^{j}T^{i\sigma}{}_{kl}+R_{abk}{}^{\sigma}T^{ij}{}_{\sigma l}+R_{abl}{}^{\sigma}T^{ij}{}_{k\sigma},$$
for any tensor $T^{ij}{}_{kl}$, where $R$ is the Riemann
curvature tensor and $\nabla$ the Levi-Civita connection. 

Similar
identities hold for other types of tensors. When we do not care about
the exact indices and how they contract, we can just write the schematic
identity $\left(\nabla_{a}\nabla_{b}-\nabla_{b}\nabla_{a}\right)T^{ij}{}_{kl}=R*T.$
As $R$ is bounded on compact $M$, interchanging derivatives is a
zeroth-order operation on $M$.

For any tensor field $T^{a_{1}...a_{k}}{}_{b_{1}...b_{l}}$
(and vector field $X$), the Lie derivative is given by 
\begin{align*}
\left(\mathcal{L}_{X}T\right)^{a_{1}...a_{k}}{}_{b_{1}...b_{l}}= & X^{c}\nabla_{c}T^{a_{1}...a_{k}}{}_{b_{1}...b_{l}}-\Sigma_{i=1}^{k}T^{a_{1}...c...a_{k}}{}_{b_{1}...b_{l}}\nabla_{c}X^{a_{i}}\\
 & +\Sigma_{i=1}^{k}T^{a_{1}...a_{k}}{}_{b_{1}...c...b_{l}}\nabla_{b_{i}}X^{c}
\end{align*}
Then we have $\mathcal{L}_{X}\left(A\otimes B\right)=\mathcal{L}_{X}A\otimes B+A\otimes\mathcal{L}_{X}B$
for any tensor fields $A,B$. 

Because $\nabla$ is metric and torsion-free, we have
$$\mathcal{L}_{X}\left\langle Y,Z\right\rangle =\left\langle \nabla_{X}Y,Z\right\rangle +\left\langle Y,\nabla_{X}Z\right\rangle,$$
and
$$\nabla_{X}Y-\nabla_{Y}X=\left[X,Y\right]=\mathcal{L}_{X}Y.$$

We also have
$$d^{2}=0,\quad d\Delta_{H}=\Delta_{H}d,\quad \Delta_{H}\star=\star\Delta_{H},\quad \mathcal{L}_{X}d=d\mathcal{L}_{X},$$
as well as
$$\mathcal{L}_{X}\vol=\Div X\vol,\quad \star1=\vol,\star\vol=1,$$
and
$$d\star=\left(-1\right)^{k}\star\delta,$$ $$\delta\star=(-1)^{k+1}\star d,$$
$$\star\star=\left(-1\right)^{k(2-k)}$$ on $\Omega^{k}\left(M\right)$.

For tensor $T_{a_{1}...a_{k}}$, define the Weitzenbock curvature operator by writing
\begin{align*}
\Ric(T)_{a_{1}...a_{k}}&=2\sum_{j=1}^{k}\nabla_{[i}\nabla_{a_{j}]}T_{a_{1}...a_{j-1}}{}^{i}{}_{a_{j+1}...a_{k}}\\
&=\sum_{j}R_{a_{j}}{}^{\sigma}T_{a_{1}...a_{j-1}\sigma a_{j+1}...a_{k}}-\sum_{j\neq l}R_{a_{j}}{}^{\mu}{}_{a_{l}}{}^{\sigma}T_{a_{1}...\sigma...\mu...a_{k}}
\end{align*}
where $R_{ab}=R_{a\sigma b}{}^{\sigma}$ is the Ricci tensor.
Then we have the Weitzenbock formula,
\[
\Delta_{H}\omega=\nabla_{i}\nabla^{i}\omega-\Ric(\omega)
\]
for all $\omega\in \Omega^k(M)$, where $\nabla_{i}\nabla^{i}\omega=\tr(\nabla^{2}\omega)$ is also
called the {\it connection Laplacian}, which differs from the Hodge
Laplacian by a zeroth-order term. The geometry of $M$ and differential
forms are more easily handled by the Hodge Laplacian, while the connection
Laplacian is more useful in calculations with tensors and the Penrose
notation.

For tensors $T_{a_{1}...a_{k}}$ and $Q_{a_{1}...a_{k}}$, the tensor
inner product is given by $$\left\langle T,Q\right\rangle =T_{a_{1}...a_{k}}Q^{a_{1}...a_{k}}.$$
However, for $\omega,\eta\in\Omega^{k}(M),$ there is another dot product,
called the Hodge inner product, where 
\[
\left\langle \omega,\eta\right\rangle _{\Lambda}=\frac{1}{k!}\left\langle \omega,\eta\right\rangle 
\]
 So $\left|\omega\right|_{\Lambda}=\sqrt{\frac{1}{k!}}\left|\omega\right|.$
We then define $$\left\langle \left\langle \omega,\eta\right\rangle \right\rangle =\int_{M}\left\langle \omega,\eta\right\rangle \vol$$
and $$\left\langle \left\langle \omega,\eta\right\rangle \right\rangle _{\Lambda}=\int_{M}\left\langle \omega,\eta\right\rangle _{\Lambda}\vol.$$
Recall that $\omega\wedge\star\eta=\left\langle \omega,\eta\right\rangle _{\Lambda}\vol$
for all $\omega,\eta\in\Omega^{k}(M)$. Also, for all $\omega\in \Omega^{k}(M)$ and $\eta\in \Omega^{k+1}(M)$, we have
\[
\left\langle \left\langle d\omega,\eta\right\rangle \right\rangle _{\Lambda}=\left\langle \left\langle \omega,\delta\eta\right\rangle \right\rangle _{\Lambda}.
\]

Lastly,
$$\nabla_{X}(\star\omega)=\star\left(\nabla_{X}\omega\right),$$
and $$\left|\star\omega\right|_{\Lambda}=\left|\omega\right|_{\Lambda}$$
for any $\omega\in\Omega^{k}(M),X\in\mathfrak{X}(M)$.

We remark that the signs of $\Ric$ and $\Delta_{H}$ in the literature can differ according to various conventions commonly in use.

\section{Trilinear estimate}
\label{appendix:lemma}

In this appendix, we give the proof of \Lemref{trilinear_new}.  The arguments extend and generalize
the proof of related results in \cite{haniGlobalWellposednessCubic2011}.  We sketch the details
for completeness.  We begin with an integration-by-parts lemma.
\begin{lem}
\label{lem:a.1}
For $i=1,2,3,4$, let $e_{i}\in C^{\infty}\left(M\right)$ be eigenfunctions
where $\left(-\Delta\right)e_{i}=n_{i}^{2}e_{i}$, and assume 
$n_{1}\geq n_{2}\geq n_{3}\geq n_{4}\geq0$ and $n_{1}^{2}\neq n_{2}^{2}+n_{3}^{2}+n_{4}^{2}$.

Set $\mathcal{N}=\frac{1}{n_{1}^{2}-n_{2}^{2}-n_{3}^{2}-n_{4}^{2}}$.  Then, for any $a_{1},a_{2},a_{3},a_{4}\in\mathbb{N}_{0}$ and $m\in\mathbb{N}_{1}$,
we have the schematic identity
\begin{align*}
&\int_{M}\left(\nabla^{a_{1}}e_{2}\right)*\left(\nabla^{a_{2}}e_{2}\right)*\left(\nabla^{a_{3}}e_{3}\right)*\left(\nabla^{a_{4}}e_{4}\right)\\
&\hspace{0.2in}= \mathcal{N}^{m}\sum_{\substack{b_{2}+b_{3}+b_{4}=2m\\
0\leq b_{2},b_{3},b_{4}\leq m
}
}\int_{M}\nabla^{a_{1}}e_{1}*\nabla^{a_{2}+b_{2}}e_{2}*\nabla^{a_{3}+b_{3}}e_{3}*\nabla^{a_{4}+b_{4}}e_{4}\\
&\hspace{0.4in} +\mathcal{N}^{m}\sum_{\substack{\sum_{j}c_{j}\leq\sum_{j}a_{j}+2m-2\\
0\leq c_{j}\leq a_{j}+m-1\;\forall j\neq1\\
c_{1}\leq a_{1}
}
}\int_{M}T_{mc_{1}c_{2}c_{3}c_{4}}*\nabla^{c_{1}}e_{1}*\nabla^{c_{2}}e_{2}*\nabla^{c_{3}}e_{3}*\nabla^{c_{4}}e_{4}
\end{align*}
for some smooth tensors $T_{mc_{1}c_{2}c_{3}c_{4}}$. We note that
besides $\mathcal{N}$, there is no dependence on any $n_{i}$. 
\end{lem}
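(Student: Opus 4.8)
The plan is to prove \Lemref{a.1} by induction on $m$, the inductive step being a single integration by parts. The mechanism is that $e_1$ carries the Laplace eigenvalue $n_1^2$, while the product $e_2e_3e_4$ behaves like an approximate eigenfunction with eigenvalue $n_2^2+n_3^2+n_4^2$: by the Leibniz rule, $-\nabla^i\nabla_i(e_2e_3e_4)=(n_2^2+n_3^2+n_4^2)e_2e_3e_4-2\sum_{j<j'}\langle\nabla e_j,\nabla e_{j'}\rangle e_{j''}$, so pairing against $e_1$ and using self-adjointness of the connection Laplacian $\nabla^i\nabla_i$ on $L^2(M)$ (i.e.\ integration by parts on closed $M$) pulls the factor $\mathcal{N}=(n_1^2-n_2^2-n_3^2-n_4^2)^{-1}$ out front at the cost of replacing $e_2e_3e_4$ by the gradient cross-terms, which carry two extra derivatives, one apiece on two of the three factors. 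Since we also carry covariant derivatives $\nabla^{a_i}e_i$, we will need, beyond self-adjointness, two elementary facts: the Leibniz rule for $\nabla^i\nabla_i$ on a product of tensors (which produces exactly the $-2\,\nabla_i(\cdot)*\nabla^i(\cdot)$ cross-terms), and the schematic commutator identity $-\nabla^i\nabla_i\nabla^a e_j=n_j^2\nabla^a e_j+\sum_{c\leq a}T*\nabla^c e_j$ for every $a\in\mathbb{N}_0$, where the $T$'s are smooth tensors built from the Riemann curvature and its covariant derivatives and are, crucially, independent of every $n_i$; this follows from $-\nabla^i\nabla_i e_j=n_j^2 e_j$ together with the curvature identities in Appendix \ref{appendix:notation} for commuting covariant derivatives.

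First I would establish a one-step identity, in the slightly more general form carrying an arbitrary smooth spectator tensor $T$ (the case $T=1$ being what is needed for the main term). Writing $\mathcal{N}^{-1}=n_1^2-n_2^2-n_3^2-n_4^2$, which is nonzero by hypothesis, start from $\int_M T*\nabla^{a_1}e_1*\nabla^{a_2}e_2*\nabla^{a_3}e_3*\nabla^{a_4}e_4$, substitute $n_1^2\nabla^{a_1}e_1=-\nabla^i\nabla_i\nabla^{a_1}e_1-\sum_{c\leq a_1}T'*\nabla^c e_1$, transfer the $-\nabla^i\nabla_i$ onto the remaining product by self-adjointness, and expand it by Leibniz. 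The diagonal contributions are: the three terms $n_j^2(\text{original integral})$, whose sum $(n_2^2+n_3^2+n_4^2)(\text{original integral})$ is moved to the left-hand side to form $\mathcal{N}^{-1}(\text{original integral})$; curvature terms in which some $e_j$ loses derivatives (from the commutator identity); and possibly a term with $-\nabla^i\nabla_i T$. The off-diagonal contributions among the three factors $\nabla^{a_j}e_j$ give, after dividing by $\mathcal{N}^{-1}$, exactly $\mathcal{N}\sum_{b_2+b_3+b_4=2,\;0\leq b_j\leq1}\int_M T*\nabla^{a_1}e_1*\nabla^{a_2+b_2}e_2*\nabla^{a_3+b_3}e_3*\nabla^{a_4+b_4}e_4$; the remaining off-diagonal contributions, pairing $\nabla T$ against one $\nabla^{a_j}e_j$, produce error terms in which a single $c_j$ exceeds $a_j$ by one and the spectator tensor becomes $\nabla T$ (these vanish when $T=1$, so never pollute the genuine main term). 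Throughout, the only $n_i$-dependence introduced is the explicit factor of $\mathcal{N}$.

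Then I would run the induction on $m$. The base case $m=1$ is the one-step identity with $T=1$. For the inductive step, apply the one-step identity (with the appropriate spectator tensor) to each integral appearing in the level-$m$ formula, and check that the bookkeeping closes: (i) $-\nabla^i\nabla_i$ is always transferred off the $e_1$-factor, so $e_1$ never acquires derivatives and $c_1\leq a_1$ persists; (ii) a main-term factor at level $m$ has the form $\nabla^{a_j+b_j}e_j$ with $b_j\leq m$, and one further step adds at most one derivative to it, giving $b_j+b_j'\leq m+1$, the level-$(m+1)$ constraint; (iii) an error-term factor at level $m$ satisfies $c_j\leq a_j+m-1$, and one further step raises it by at most one (through a cross-term or a $\nabla T$ pairing), giving $c_j\leq a_j+m$, while the total degree rises by at most $2$, matching $\sum a_j+2m-2\mapsto\sum a_j+2(m+1)-2$; and (iv) each step multiplies by one further factor of $\mathcal{N}$, turning $\mathcal{N}^m$ into $\mathcal{N}^{m+1}$, and all newly created tensors remain curvature-built and $n_i$-free. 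Collecting terms yields the level-$(m+1)$ identity.

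The main obstacle is not the analysis -- it is one integration by parts -- but the schematic bookkeeping: one must verify that the tensors generated by commuting covariant derivatives never secretly depend on any $n_i$, that no derivative ever migrates onto $e_1$, and that the three index constraints propagate correctly through the iteration. The delicate interaction is with the spectator tensors: a derivative of $T$ can be contracted against a $\nabla e_j$, nudging one $c_j$ up by one -- but this is exactly absorbed by the slack built into the hypothesis $c_j\leq a_j+m-1$, which is why the error family must be stated with precisely that degree allowance.
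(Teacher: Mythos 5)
Your proposal is correct and follows essentially the same route as the paper's proof: a one-step integration-by-parts identity obtained by trading $n_1^2$ for $-\nabla_\alpha\nabla^\alpha$ via self-adjointness, the Leibniz expansion producing the diagonal eigenvalue terms (absorbed into $\mathcal{N}^{-1}$), the cross-terms (main family), and curvature/commutator terms (error family), followed by an induction on $m$ with the same bookkeeping of the index constraints. The only cosmetic difference is that you fold the spectator-tensor version and the $T=1$ version into a single one-step identity, whereas the paper states them as two separate identities and tracks the iteration with its $A(s,t)$, $B(s,t)$ notation.
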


\begin{proof}
Recall that $\Delta f=\nabla_{\alpha}\nabla^{\alpha}f$ for any function
$f$ ($\alpha$ is an abstract index, not a natural number).  Also recall 
that for any tensor $T$, $\nabla_{\alpha}\nabla^{\alpha}*\nabla^{k}T=\nabla^{k}*\nabla_{\alpha}\nabla^{\alpha}T+\nabla^{k}\left(R*T\right),$
where $R$ is the Riemann curvature tensor and $\nabla^{k}\left(R*T\right)=\sum_{i=0}^{k}\nabla^{i}R*\nabla^{k-i}T$.

We then observe that 
\begin{align*}
&n_{1}^{2}\int_{M}\nabla^{a_{1}}e_{1}*\nabla^{a_{2}}e_{2}*\nabla^{a_{3}}e_{3}*\nabla^{a_{4}}e_{4}\\
&\hspace{0.2in}=\int_{M}\nabla^{a_{1}}\left(-\Delta\right)e_{1}*\nabla^{a_{2}}e_{2}*\nabla^{a_{3}}e_{3}*\nabla^{a_{4}}e_{4}\\
&\hspace{0.2in}=\int_{M}\nabla^{a_{1}}e_{1}*\left(-\nabla_{\alpha}\nabla^{\alpha}\right)\left(\nabla^{a_{2}}e_{2}*\nabla^{a_{3}}e_{3}*\nabla^{a_{4}}e_{4}\right)\\
&\hspace{0.4in}+\int_{M}\nabla^{a_{1}}\left(R*e_{1}\right)*\nabla^{a_{2}}e_{2}*\nabla^{a_{3}}e_{3}*\nabla^{a_{4}}e_{4}\\
&\hspace{0.2in}=\left(n_{2}^{2}+n_{3}^{2}+n_{4}^{2}\right)\int_{M}\nabla^{a_{1}}e_{1}*\nabla^{a_{2}}e_{2}*\nabla^{a_{3}}e_{3}*\nabla^{a_{4}}e_{4}\\
&\hspace{0.4in}+\sum_{\substack{b_{2}+b_{3}+b_{4}=2\\
b_{2},b_{3},b_{4}\leq1
}
}\int_{M}\nabla^{a_{1}}e_{1}*\nabla^{a_{2}+b_{2}}e_{2}*\nabla^{a_{3}+b_{3}}e_{3}*\nabla^{a_{4}+b_{4}}e_{4}\\
&\hspace{0.4in}+\sum_{\substack{c_{j}\leq a_{j}\;\forall j}
}\int_{M}T_{c_{1}c_{2}c_{3}c_{4}}*\nabla^{c_{1}}e_{1}*\nabla^{c_{2}}e_{2}*\nabla^{c_{3}}e_{3}*\nabla^{c_{4}}e_{4}
\end{align*}

This yields
\begin{align}
\nonumber&\int_{M}\nabla^{a_{1}}e_{1}*\nabla^{a_{2}}e_{2}*\nabla^{a_{3}}e_{3}*\nabla^{a_{4}}e_{4}\\
\nonumber&\hspace{0.2in}=\mathcal{N}\sum_{\substack{b_{2}+b_{3}+b_{4}=2\\
b_{2},b_{3},b_{4}\leq1
}
}\int_{M}\nabla^{a_{1}}e_{1}*\nabla^{a_{2}+b_{2}}e_{2}*\nabla^{a_{3}+b_{3}}e_{3}*\nabla^{a_{4}+b_{4}}e_{4}\\
&\hspace{0.4in}+\mathcal{N}\sum_{\substack{c_{j}\leq a_{j}\;\forall j}
}\int_{M}T_{c_{1}c_{2}c_{3}c_{4}}*\nabla^{c_{1}}e_{1}*\nabla^{c_{2}}e_{2}*\nabla^{c_{3}}e_{3}*\nabla^{c_{4}}e_{4}\label{eq:Appendix_Eq1}
\end{align}

On the other hand, for any smooth tensor $T$, we have
\begin{align*}
&n_{1}^{2}\int_{M}T*\nabla^{a_{1}}e_{1}*\nabla^{a_{2}}e_{2}*\nabla^{a_{3}}e_{3}*\nabla^{a_{4}}e_{4}\\
&\hspace{0.2in}=\int_{M}T*\nabla^{a_{1}}\left(-\Delta\right)e_{1}*\nabla^{a_{2}}e_{2}*\nabla^{a_{3}}e_{3}*\nabla^{a_{4}}e_{4}\\
&\hspace{0.2in}=\left(n_{2}^{2}+n_{3}^{2}+n_{4}^{2}\right)\int_{M}T*\nabla^{c_{1}}e_{1}*\nabla^{c_{2}}e_{2}*\nabla^{c_{3}}e_{3}*\nabla^{c_{4}}e_{4}\\
&\hspace{0.4in}+\sum_{\substack{\sum_{j}c_{j}\leq\sum_{j}a_{j}+2\\
c_{j}\leq a_{j}+1\;\forall j\neq1\\
c_{1}\leq a_{1}
}
}\int_{M}T_{c_{1}c_{2}c_{3}c_{4}}*\nabla^{c_{1}}e_{1}*\nabla^{c_{2}}e_{2}*\nabla^{c_{3}}e_{3}*\nabla^{c_{4}}e_{4},
\end{align*}
which gives
\begin{align}
\nonumber &\int_{M}T*\nabla^{a_{1}}e_{1}*\nabla^{a_{2}}e_{2}*\nabla^{a_{3}}e_{3}*\nabla^{a_{4}}e_{4}\\
&\hspace{0.2in}=\mathcal{N}\sum_{\substack{\sum_{j}c_{j}\leq\sum_{j}a_{j}+2\\
c_{j}\leq a_{j}+1\;\forall j\neq1\\
c_{1}\leq a_{1}
}
}\int_{M}T_{c_{1}c_{2}c_{3}c_{4}}*\nabla^{c_{1}}e_{1}*\nabla^{c_{2}}e_{2}*\nabla^{c_{3}}e_{3}*\nabla^{c_{4}}e_{4}.\label{eq:Appendix_Eq2}
\end{align}
Fix $a_1,a_2,a_3,a_4$. We now use induction. To simplify notation, we write $A(s,t)$ for
\[
\sum_{\substack{c_{2}+c_{3}+c_{4}=s\\
\max\left(c_{2}-a_{2},c_{3}-a_{3},c_{4}-a_{4}\right)\leq t\\
c_{2}\geq a_{2},c_{3}\geq a_{3},c_{4}\geq a_{4}
}
}\int_{M}\nabla^{a_{1}}e_{1}*\nabla^{c_{2}}e_{2}*\nabla^{c_{3}}e_{3}*\nabla^{c_{4}}e_{4}.
\]
Similarly, we write $B(s,t)$ for any linear combination of terms $\int_{M}T*\nabla^{c_{1}}e_{1}*\nabla^{c_{2}}e_{2}*\nabla^{c_{3}}e_{3}*\nabla^{c_{4}}e_{4}$
where $s\geq c_{1}+c_{2}+c_{3}+c_{4}-a_{1}$, $t\geq\max\left(c_{2}-a_{2},c_{3}-a_{3},c_{4}-a_{4}\right)$,
$c_{1}\leq a_{1}$ and $T$ is a smooth tensor.

Then (\ref{eq:Appendix_Eq1}) implies $A(s,t)=\mathcal{N}A(s+2,t+1)+\mathcal{N}B(s,t)$,
while (\ref{eq:Appendix_Eq2}) implies $B(s,t)=\mathcal{N}B(s+2,t+1)$. A straightforward
induction argument then gives 
\[
A(s,0)=\mathcal{N}^{m}A(s+2m,m)+\mathcal{N}^{m}B(s+2m-2,m-1),
\]
which was the desired claim.
\end{proof}

\begin{rem}
Lemma \ref{lem:a.1} generalizes to an arbitrary number of functions.
In fact, we only need the case of three functions (making $e_{4}=1$).
In this case, the first term on the right hand side naturally simplifies
to $\mathcal{N}^{m}\int_{M}\nabla^{a_{1}}e_{1}*\nabla^{a_{2}+m}e_{2}*\nabla^{a_{3}+m}e_{3}$.
\end{rem}

We are ready to prove \Lemref{trilinear_new}. 

\begin{proof}[Proof of \Lemref{trilinear_new}] Let $f_{1},f_{2},f_{3}\in L^{2}\left(M\right)$;
$a_{1},b_{1},a_{2},b_{2},a_{3},b_{3},J\in\mathbb{N}_{0}$ and $l_{1}\geq l_{2}\geq l_{3}\geq\lambda_{1}(M)$ 
be such that $l_{1}=l_{2}+Kl_{3}+2$ for $K>1$.

We pass to eigenspace projections, obtaining
\begin{align}
\nonumber & \int_{M}\left(\nabla^{a_{1}}\left(-\Delta\right)^{-b_{1}}P_{l_{1}}f_{1}\right)*\left(\nabla^{a_{2}}\left(-\Delta\right)^{-b_{2}}P_{l_{2}}f_{2}\right)*\left(\nabla^{a_{3}}\left(-\Delta\right)^{-b_{3}}P_{l_{3}}f_{3}\right)\\
\nonumber &\hspace{0.2in}=\sum\limits _{\substack{z_{j}\in[0,1)\cap\left(\sigma\left(\sqrt{-\Delta}\right)-l_{j}\right)\\
j=1,2,3
}
}\bigg[\left(\prod_{i=1,2,3}\frac{1}{\left(l_{i}+z_{i}\right)^{2b_{i}}}\right)\\
&\hspace{1.2in}\cdot\left(\int_{M} \nabla^{a_{1}}\pi_{l_{1}+z_{1}}f_{1}*\nabla^{a_{2}}\pi_{l_{2}+z_{2}}f_{2}*\nabla^{a_{3}}\pi_{l_{3}+z_{3}}f_{3}\right)\bigg].\label{eq:a.1}
\end{align}

Invoking \Lemref{a.1}, 
and setting
$$\Psi\left(z_{1},z_{2},z_{3}\right):=\left(\prod_{i=1,2,3}\frac{1}{\left(l_{i}+z_{i}\right)^{2b_{i}}}\right)\frac{1}{\left(\left(l_{1}+z_{1}\right)^{2}-\left(l_{2}+z_{2}\right)^{2}-\left(l_{3}+z_{3}\right)^{2}\right)^{J}},$$
we see that the right-hand side of (\ref{eq:a.1}) is equal to
\begin{align*}
& \sum\limits _{\substack{z_{j}\in[0,1)\cap\left(\sigma\left(\sqrt{-\Delta}\right)-l_{j}\right)\\
j=1,2,3
}
} \Psi(z_1,z_2,z_3)\\
&\hspace{0.2in}\cdot\bigg(\int_{M}\nabla^{a_{1}}\pi_{l_{1}+z_{1}}f_{1}*\nabla^{a_{2}+J}\pi_{l_{2}+z_{2}}f_{2}*\nabla^{a_{3}+J}\pi_{l_{3}+z_{3}}f_{3}\\
&\hspace{0.3in}+\sum_{(c_1,c_2,c_3)\in \Xi}\int_{M}T_{Jc_{1}c_{2}c_{3}}*\nabla^{c_{1}}\pi_{l_{1}+z_{1}}f_{1}*\nabla^{c_{2}}\pi_{l_{2}+z_{2}}f_{2}*\nabla^{c_{3}}\pi_{l_{3}+z_{3}}f_{3}\bigg),
\end{align*}
where
\begin{align*}
\Xi:=\bigg\{(c_1,c_2,c_3):&\sum_{j}c_{j}\leq\sum_{j}a_{j}+2J-2,\\
&\,\, 0\leq c_{j}\leq a_{j}+J-1\;\forall j\neq1,\,\,\textrm{and}\,\, c_{1}\leq a_{1}\bigg\}.
\end{align*}

Now, note that 
\begin{align*}
l_{1}^{2}-\left(l_{2}+1\right)^{2}-\left(l_{3}+1\right)^{2}&=\left(K^{2}-1\right)l_{3}^{2}+2+2Kl_{2}l_{3}+2l_{2}+\left(4K-2\right)l_{3}\\
&\gtrsim\max_{i=1,2,3}\left(l_{i}+1\right).
\end{align*} 

As a consequence,
\[
\left\Vert \Psi\right\Vert _{C^{2}\left([0,1]^{3}\right)}\lesssim_{\lambda_{1}}\left(\prod_{i=1,2,3}\frac{1}{l_{i}^{2b_{i}}}\right)\frac{1}{\left(Kl_{2}l_{3}\right)^{J}}.
\]
By the Fourier trick and Proposition \Blackboxref{bilinear_1_old}, we bound
the right-hand side of (\ref{eq:a.1}) by 
\[
\left(\prod_{i=1,2,3}\frac{1}{l_{i}^{2b_{i}}}\right)\frac{1}{\left(Kl_{2}l_{3}\right)^{J}}l_{1}^{a_{1}}\left\Vert P_{l_{1}}f_{1}\right\Vert _{2}l_{3}^{1/4}l_{2}^{a_{2}+J}\left\Vert P_{l_{2}}f_{2}\right\Vert _{2}l_{3}^{a_{3}+J}\left\Vert P_{l_{3}}f_{3}\right\Vert _{2},
\]
where we have used the fact that 
\begin{align*}
\left\Vert \nabla^{a_{1}}P_{l_{1}}f_{1}\right\Vert _{2}&\lesssim\left\Vert P_{l_{1}}f_{1}\right\Vert _{H^{a_{1}}}\\
&\sim\left\Vert \left(-\Delta\right)^{a_{1}/2}P_{l_{1}}f_{1}\right\Vert _{2}\\
&\sim_{M}l_{1}^{a_{1}}\left\Vert P_{l_{1}}f_{1}\right\Vert _{2}.
\end{align*}
This completes the proof of the lemma.
\end{proof}

\end{document}